%In the name of ALLAH
%Ya Mohammad-e- Mostafa
%Ya Ail-e-Mortaza
%Ya Fatematye Zahra
%Ya Hasan-e-Mojtaba
%Ya Hossein-e-Shahid

%\def\fileversion{2.1.9}

%\NeedsTeXFormat{LaTeX2e}

\documentclass[12pt]{article}

%Packages-------------------------------------
\usepackage[all]{xypic}
\usepackage{amsmath}
\usepackage{amsfonts}
\usepackage{amssymb}
\usepackage{amsthm}
\usepackage{color}

%Layout---------------------------------------
\usepackage[top=2cm, bottom=2cm, left=2cm, right=2cm]{geometry}
%Theorems-------------------------------------

\newtheorem{thm}{Theorem}[section]

\newtheorem{crl}[thm]{Corollary}

\newtheorem{prp}[thm]{Proposition}

\newtheorem{lmm}[thm]{Lemma}

\newtheorem{conj}[thm]{Conjecture}

\newtheorem{rmk}[thm]{Remark}

%Commands-------------------------------------

\newcommand {\mb}{\mathbb}
\newcommand {\Z}{\mb Z}
\newcommand {\R}{\mb R}
\newcommand {\C}{\mb C}
\newcommand {\Q}{\mb H}
\newcommand {\F}{\mb F}
\newcommand {\colim}{\textrm{colim}\ }

\newcommand {\emb}{\mathrm{Emb}}

\newcommand {\pri}{\mathrm{Prim}}
\newcommand {\ind}{\mathrm{Ind}}
\newcommand {\ex}{\mathrm{excess}}
\newcommand {\ext}{\mathrm{Ext}}
\newcommand {\lra}{\longrightarrow}
\newcommand {\la}{\langle}
\newcommand {\ra}{\rangle}

\begin{document}

\title{Filtered finiteness of the image of the unstable Hurewicz homomorphism with applications to bordism of immersions}
%Eliminating higher powers of $2$ in $H_*QS^1$ and $H_*Q\Sigma^2X$ from being spherical}

\author{Hadi Zare\\
        School of Mathematics, Statistics,
        and Computer Sciences\\
        College of Science, University of Tehran, Tehran, Iran\\
        \textit{email:hadi.zare} at \textit{ut.ac.ir}}
\date{}

\maketitle

\begin{abstract}
After recent work of Hill, Hopkins, and Ravenel on the Kervaire invariant one problem \cite{HHR}, as well as Adams' solution of the Hopf invariant one problem \cite{Adams-Hopfinv}, an immediate consequence of Curtis conjecture is that the set of spherical classes in $H_*Q_0S^0$ is finite. Similarly, Eccles conjecture, when specialised to $X=S^n$ with $n>0$, together with Adams' Hopf invariant one theorem, implies that the set of spherical classes in $H_*QS^n$ is finite. We prove a filtered version of the above the finiteness properties. We show that if $X$ is an arbitrary $CW$-complex such that $H_*X$ is finite dimensional then the image of the composition ${_2\pi_*}\Omega^l\Sigma^{l+2}X\to{_2\pi_*}Q\Sigma^2X\to H_*Q\Sigma^2X$
is finite; the finiteness remains valid if we formally replace $X$ with $S^{-1}$. As an immediate and interesting application, we observe that for any compact Lie group $G$ with $\dim\mathfrak{g}>1$, for any $n>0$ and $l>0$, the image of the composition $${_2\pi_*}\Omega^l\Sigma^{l+\dim\mathfrak{g}}BG_+^{[n]}\to{_2\pi_*}Q\Sigma^{\dim\mathfrak{g}}BG_+^{[n]}\to{_2\pi_*}Q\Sigma^{\dim\mathfrak{g}}BG_+\to {_2\pi_*}Q_0S^0\to H_*Q_0S^0$$
is finite where $\Sigma^{\dim\mathfrak{g}}BG_+\to S^0$ is a suitably twisted transfer map. Next, we consider work of Koschorke and Sanderson which using Thom-Pontrjagin construction provides a $1$-$1$ correspondence (a group isomorphism if $m+d>0$)
$\Phi^{N,\xi}_{m,d}:\mathrm{Imm}_\xi^d(\R^m\times N)\longrightarrow [N_+,\Omega^{m+d}\Sigma^dT(\xi)]$. We apply work of Asadi and Eccles on computing Stiefel-Whitney numbers of immersions to show that given a framed immersion $M\to\R^{n+k}$ and choosing $n$ very large with respect to $d$ and $k$, all self-intersection manifolds of an arbitrary element of $\mathrm{Imm}_\xi^d(\R^m\times N)$ are boundary. \\
These results are based on providing explicit upper bounds on the dimension of spherical classes in $H_*\Omega^lS^{l+1}$ as well as $H_*\Omega^l\Sigma^{l+2}X$ which will be achieved by eliminating higher powers of $2$ in $H_*(QS^1;\Z/2)$, respectively in $H_*(Q\Sigma^2X;\Z/2)$, from being spherical. This corrects proof of \cite[Lemma 3.6]{Zare-Els-1} in the case of $n=1$ and consequently shows that there exist no spherical class in $H_*\Omega^lS^{l+1}$ when $l\geqslant 3$ in dimensions $2^{l-1}l+3$ and above. This generalises \cite[Theorem 2.3]{Zare-Els-1} to the case $n=1$.
\end{abstract}

\textbf{AMS subject classification:$55Q45,55P42,55N22,57Q35,57R20$}\\
\textbf{Keywords:} Loop space, Dyer-Lashof algebra, Steenrod algebra, bordism of immersions, characteristic classes, Stiefel-Whitney numbers

%\tableofcontents

\section{Introduction and statement of results}
For a pointed space $X$, let $QX=\colim\Omega^i\Sigma^iX$ be the infinite loop space associated to $\Sigma^\infty X$ and write $Q_0X$ for its base point component. Note that $\pi_i^sX\simeq\pi_iQX$ for $i\geqslant 0$, $\pi_i^sX\simeq\pi_iQ_0X$ for $i>0$, and if $X$ is path connected then $Q_0X=QX$. Curtis conjecture on spherical classes in $H_*QS^0$ reads as following.

\begin{conj}[Curtis conjecture]\label{Curtisconj} \cite[Theorem 7.1]{Curtis}
For $n>0$, only Hopf invariant one and Kervaire invariant one elements map nontrivially under the unstable Hurewicz homomorphism
$${\pi_n^s}\simeq{\pi_n}QS^0\to H_*QS^0.$$
\end{conj}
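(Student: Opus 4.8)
The plan is to pass from the topological Hurewicz homomorphism to its algebraic model on the $E_2$-page of the Adams spectral sequence and to control the latter completely. First I would record the homological input: by Dyer--Lashof, May and Cohen--Lada--May, $H_*(Q_0S^0;\F_2)$ is a polynomial algebra on the classes $Q^I[1]*[-2^{\ell(I)}]$, where $I$ ranges over admissible sequences of positive excess and $*$ is the Pontryagin product. Since the image of the Hurewicz homomorphism consists of primitive classes, the first reduction is to determine the primitives of this Hopf algebra and to decide which of them can actually be spherical. The known spherical classes fit this description: the Hopf invariant one elements $\eta,\nu,\sigma$ (dimensions $1,3,7$) and the Kervaire invariant one elements $\theta_j$ (dimensions $2^{j+1}-2$) hit specific primitive monomials built from a single Dyer--Lashof operation and from its square respectively, so the content of the conjecture is precisely that no further primitive monomial is hit.

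Next I would organise the detection by Adams filtration. A nonzero Hurewicz image forces an element of $\pi_n^s$ whose class survives, and the crucial tool is the Lannes--Zarati homomorphism $\varphi_s:\ext_{\A}^{s,s+n}(\F_2,\F_2)\to (\F_2\otimes_{\A}R_s)_n$ to the target built from the Dickson--Singer functor $R_s$, which is precisely the associated graded of the Hurewicz map with respect to the Adams filtration. Thus I would prove that an element of positive stem and Adams filtration $s$ can map nontrivially under Hurewicz only if $\varphi_s$ is nonzero on its class. The Hopf invariant one elements live in filtration $s=1$ and the Kervaire elements in $s=2$; for these low filtrations $\varphi_1$ and $\varphi_2$ are understood and single out exactly the Dickson generators corresponding to the listed classes. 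Combined with Adams' Hopf invariant one theorem (ruling out all but $\eta,\nu,\sigma$ in $s=1$) and the Hill--Hopkins--Ravenel solution of the Kervaire problem (ruling out $\theta_j$ for $j\geq 7$ in $s=2$), the conjecture would reduce to a single statement about higher filtration.

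That statement --- the main obstacle --- is the vanishing $\varphi_s=0$ for all $s\geq 3$ in positive stems. This is the algebraic heart of the problem: one must show that in filtration at least $3$ every permanent cycle is annihilated by the map to the Dickson invariants, equivalently that the only Dickson--Mui invariants surviving to the $E_\infty$-page are those already accounted for in filtrations $1$ and $2$. I would attack it through the lambda algebra and Singer's algebraic transfer, using the action of the Steenrod algebra via the Nishida relations to show that the relevant monomials in $H_*Q_0S^0$ carry nontrivial higher squaring operations and therefore cannot be hit by a sphere, whose homology supports none. I expect this step to be genuinely the hardest part: it is exactly the point at which the full conjecture remains open, and a complete argument would require either a new vanishing theorem for the Lannes--Zarati homomorphism or a direct unstable computation --- via the EHP sequence and the lower central series spectral sequence --- showing that primitive classes above the Kervaire range die before reaching the bottom sphere.
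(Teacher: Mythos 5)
The statement you are addressing is stated in the paper as a \emph{conjecture} (Conjecture \ref{Curtisconj}) and is an open problem; the paper contains no proof of it, only a weakened ``filtered'' version (Theorem \ref{main1}) about classes coming from the finite loop space filtration of $QX$. So there is no proof of the paper's to compare yours against, and, as your own final paragraph concedes, what you have written is a strategy outline rather than a proof.

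Concretely, there are two gaps. First, your reduction ``an element of Adams filtration $s$ can map nontrivially under the Hurewicz homomorphism only if the Lannes--Zarati homomorphism $\varphi_s$ is nonzero on its class'' is not established: it rests on the identification of $\varphi_s$ with the associated graded of the Hurewicz map, an assertion of Lannes and Zarati that was never published, and Hung's erratum \cite{Hung-erratum} explicitly retracts deductions of Curtis-type statements made on this basis. This is exactly why the present paper remarks that the Lannes--Zarati formulations ``provide strong evidence but they don't immediately imply'' Conjecture \ref{Curtisconj}. Second, the step you correctly identify as the heart of the matter --- the vanishing of $\varphi_s$ in positive stems for all $s\geqslant 3$ --- is itself Hung's generalized conjecture on spherical classes and remains open; only weak forms (e.g.\ restricted to the image of Singer's algebraic transfer, as in \cite{HungNam}) are proved. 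Even granting both, your treatment of filtrations $1$ and $2$ quietly assumes that a nonzero unstable Hurewicz image forces Adams filtration at most $2$, which is a restatement of the conjecture rather than an argument for it. If you want to engage with what the paper actually proves, aim at Theorem \ref{main1} or Theorem \ref{2^t}, whose method --- eliminating higher powers of $2$ from being spherical via the square-root map, the Milnor--Moore sequence, and the Nishida relation $Sq^1_*Q^{2d}=Q^{2d-1}$ --- is entirely different from the Lannes--Zarati machinery you describe.
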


Throughout the paper, we work at the prime $2$ writing ${\pi_*^s}$ and ${\pi_*}$ for the $2$-component of homotopy and stable homotopy, and $H_*$ for $H_*(-;\Z/2)$. Since $Q$ is used for denoting operations (both upper and lower indexed) when appearing before a homology class as well as $\Omega^\infty\Sigma^\infty$ when appearing before a space, we then have diverted from the usual notation of writing $Q(-)$ for `the quotient module of indecomposable elements' functor and we instead write $\ind(-)$ for the latter. To be consistent in notation, we write $\pri(-)$ for the submodule of primitive elements functor.\\

A closely related conjecture is a conjecture due to Eccles which may be stated as follows.

\begin{conj}[Eccles conjecture] \label{Ecclesconj}
Let $X$ be a path connected $CW$-complex with finitely generated homology. For $n>0$, suppose $h(f)\neq 0$ where ${\pi_n^s}X\simeq{\pi_n}QX\to H_*QX$ is the unstable Hurewicz homomorphism. Then, the stable adjoint of $f$ either is detected by homology or is detected by a primary operation in its mapping cone.
\end{conj}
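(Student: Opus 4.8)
The plan is to read a spherical class off the weight (Snaith) filtration of $H_*QX$ and to match each jump of that filtration with the Adams filtration of the stable adjoint. Since $X$ is path connected, $QX=Q_0X$ and $\Sigma^\infty QX\simeq\bigvee_{k\geqslant1}\Sigma^\infty D_kX$ with $D_kX=(E\Sigma_k)_+\wedge_{\Sigma_k}X^{\wedge k}$, so that $\widetilde H_*QX=\bigoplus_{k\geqslant1}\widetilde H_*D_kX$; I call the summands the weight-$k$ pieces. A spherical class $h(f)$ is primitive, so I write $h(f)=\sum_k z_k$ with $z_k$ of weight $k$ and let $w$ be the least index with $z_w\neq0$. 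The first step is the observation that the weight-$1$ summand is $\widetilde H_*X$ and that the weight-$1$ component of $h(f)$ equals the stable Hurewicz image $h_X(\alpha)\in\widetilde H_nX$ of the stable adjoint $\alpha\in\pi_n^sX$. Thus $z_1\neq0$ is exactly the assertion that $\alpha$ is detected by homology, and I am reduced to the case $w\geqslant2$, where the goal becomes to produce a primary operation detecting $\alpha$ in its mapping cone $C_\alpha$.

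Next I treat $w=2$. The leading term $z_2\in\widetilde H_*D_2X$ is spherical for the second James--Hopf invariant $j_2f\colon S^n\to QD_2X$, and $\widetilde H_*D_2X$ is spanned by the products $x\otimes x$ and the lower Dyer--Lashof classes $Q^ix$ for $x$ a basis element of $\widetilde H_*X$. The plan is to use the Nishida relations together with the identification of $j_2$ with the quadratic construction to convert a nonzero $z_2$ into a nonvanishing functional Steenrod operation in $\widetilde H^*C_\alpha$: the admissible $Q^i$ that occur dualise to operations $Sq^i$ carrying the cohomology of $X$ onto the top cell of $C_\alpha$, the relevant index being fixed by matching $\ex$ of the operation with the dimension shift across $C_\alpha$. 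This is the homological form of the classical statement that the first nonzero James--Hopf invariant of an Adams-filtration-one map is detected by the very operation that detects the map, and it yields detection of $\alpha$ by a primary operation, as required.

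The crux is to exclude leading weight $w\geqslant3$, and this is where I expect the genuine obstruction, it being the exact analogue of the hard half of the Curtis conjecture. A first reduction, standard in this circle of ideas and resting on primitivity together with the $\pri$- and $\ex$-constraints on admissible Dyer--Lashof monomials, shows that a leading weight that is \emph{not} a power of two cannot occur, so only $w=2^s$ survive. The case $s=1$ is the weight-$2$ analysis above, so it remains to kill $w=2^s$ for $s\geqslant2$. For this I would detect $z_w$ through a Lannes--Zarati homomorphism for $X$, $\varphi_s\colon\ext_{\A}^{s,s+t}(\widetilde H^*X,\Z/2)\to(\Z/2\otimes_{\A}R_s\widetilde H^*X)_t$, whose target is the Dickson--Singer coinvariant functor $R_s$ applied to $\widetilde H^*X$ and which detects the spherical classes of leading weight $2^s$; the remaining goal is the vanishing $\varphi_s=0$ for $s\geqslant2$ in positive degrees. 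This is the extension to $\widetilde H^*X$ of Hung's theorem and is in fact \emph{stronger} than the sphere statement, since for $S^0$ the case $s=2$ carries the Kervaire classes; the point is that the positive connectivity of a path-connected $X$ must be shown to annihilate these weight-$4$ classes. This vanishing is the main obstacle: each weight $2^s$ carries many primitive decomposables, and to bar all of them from sphericity one must control the relevant portion of the $\Lambda$-algebra (or $EHP$) spectral sequence computing $\pi_*^sX$, which is not available in general.

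To make this last step tractable for the $X$ with $\widetilde H^*X$ finite-dimensional, I would feed in the dimension bounds established in the present paper: they leave, for each such $X$, only finitely many dimensions in which a leading weight $2^s$ with $s\geqslant2$ could be spherical. Combined with the weight-$2$ analysis and the known range of $\varphi_s$, this would reduce the full dichotomy to a finite, explicit Nishida-relation computation in each surviving dimension, thereby closing the gap left by the general vanishing statement and completing the proof for these $X$.
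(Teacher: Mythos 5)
There is a genuine gap --- in fact the statement you set out to prove is stated in the paper as Conjecture \ref{Ecclesconj} precisely because it is open; the paper proves only a \emph{filtered} version (Theorem \ref{main1}) and, in the appendix, its logical relation to Conjecture \ref{Curtisconj}. Your own write-up concedes the decisive step: to exclude leading Snaith weight $2^s$ with $s\geqslant 2$ you invoke a vanishing theorem $\varphi_s=0$ for a Lannes--Zarati homomorphism of $\widetilde H^*X$, which you acknowledge ``is not available in general.'' Worse, even granting such a vanishing, the bridge from $\varphi_s=0$ to ``no spherical class of leading weight $2^s$'' is itself problematic: the paper explicitly warns (citing Hung's erratum) that results phrased in terms of the Lannes--Zarati homomorphism ``provide strong evidence but they don't immediately imply'' the conjectures, because the identification of the Hurewicz image with an element detected by $\varphi_s$ in the appropriate associated graded is exactly the unresolved point. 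Your weight-$2$ step is also only a sketch (converting $z_2\neq 0$ into a functional-operation detection in $C_\alpha$ needs an actual argument, e.g.\ via Boardman--Steer), but it is at least plausible; the weight-$2^s$, $s\geqslant 2$, step is the same hard core as the Curtis conjecture and is not supplied.

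The attempted repair in your last paragraph does not work either. Theorem \ref{main1} bounds the dimension of spherical classes in $H_*Q\Sigma^2X$ that arise from ${\pi_*}\Omega^l\Sigma^{l+2}X$, and the bound $2^l(k+2)+2^{l-1}(l-2)+2$ depends on the loop filtration $l$ and grows exponentially with it. Since $QX=\colim\Omega^i\Sigma^iX$, every element of ${\pi_*}QX$ factors through \emph{some} finite stage, but there is no single $l$ that works for all of them, so the union over $l$ of the allowed dimensions is infinite: the theorem yields no finite list of dimensions in which leading weight $2^s$ could occur for the full $QX$. (This is exactly why the paper's result is a ``filtered'' finiteness statement rather than Conjecture \ref{weakconjectures}.) In addition, the paper's bound is proved for double suspensions $\Sigma^2X$ (and formally for $S^{-1}$), not for an arbitrary path connected $X$ as in Conjecture \ref{Ecclesconj}, so even the filtered input you want to feed in is not available at the stated generality.
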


The conjecture is stated in very geometric terms. Note that the stable adjoint of $f$ being detected by homology means that $h(f)\in H_*QX$ is stably spherical, i.e. it survives under homology suspension $H_*QX\to H_*X$ induced by $\Sigma^\infty QX\to \Sigma^\infty X$ given by the adjoint of the identity $QX\to QX$. The conjectures are related in the following way.

\begin{prp}
The Eccles conjecture for $X=P$ implies Curtis conjecture. On the other hand, the Curtis conjecture implies Eccles conjecture for $X=S^n$ with $n>0$.
\end{prp}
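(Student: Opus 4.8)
The plan is to handle the two implications by separate mechanisms. For the second one (Curtis $\Rightarrow$ Eccles for $S^n$) I would exploit the equivalence of infinite loop spaces $\Omega^n QS^n\simeq\Omega^\infty\Omega^n\Sigma^n\mathbb{S}\simeq QS^0$, under which the loop adjunction realises the classical suspension isomorphism $\pi_m^sS^n\cong\pi_{m-n}^s$. Thus a map $f\colon S^m\to QS^n$ with stable adjoint $\hat f\in\pi_m^sS^n$ corresponds to an element $g\in\pi_{m-n}QS^0=\pi_{m-n}^s$, and the two Hurewicz images are related by the $n$-fold homology suspension, $h(f)=\sigma^n h(g)$, where $\sigma\colon H_*\Omega Z\to H_{*+1}Z$. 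In particular $h(f)\neq 0$ forces $h(g)\neq 0$, so $g$ is a spherical class in $H_*QS^0$. If $m=n$ then $\hat f$ has odd degree and is already detected by $H_nS^n$, which is the homology-detection conclusion; so I assume $m>n$ and apply Curtis to deduce that $g$ is a Hopf invariant one or a Kervaire invariant one element.

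The cases are then separated by the homology suspension. I would use the standard fact that $\sigma$ annihilates Pontryagin decomposables, together with the known description of the Hurewicz images in $H_*Q_0S^0$: the Hopf invariant one images are indecomposable, whereas the Kervaire images are decomposable (in fact squares of lower classes). Hence if $g=\theta_j$ were Kervaire invariant one, $h(g)$ would be decomposable and already $\sigma h(g)=0$, giving $h(f)=\sigma^n h(g)=0$ and contradicting $h(f)\neq 0$. Therefore $g$ must be Hopf invariant one, of degree $m-n\in\{1,3,7\}$. Since the mapping cone of $\hat f$ is the $n$-fold suspension of the mapping cone of $g$ and Steenrod operations are stable, the fact that $g$ is detected by $\mathrm{Sq}^{m-n+1}$ (Adams \cite{Adams-Hopfinv}) translates into $\hat f$ being detected by the primary operation $\mathrm{Sq}^{m-n+1}$ in $C_{\hat f}$, which is exactly the remaining Eccles conclusion.

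For the first implication (Eccles for $P=\mathbb{RP}^\infty$ $\Rightarrow$ Curtis) I would invoke the Kahn--Priddy theorem: the transfer $\lambda\colon QP\to Q_0S^0$ is surjective on $2$-primary homotopy in positive degrees. Given a spherical class $\alpha\in\pi_n^s=\pi_nQ_0S^0$ with $n>0$, I lift it to $\beta\in\pi_n^sP=\pi_nQP$ with $\lambda_*\beta=\alpha$; then $0\neq h(\alpha)=\lambda_*h(\beta)$, so $h(\beta)\neq 0$ and $\beta$ is spherical in $H_*QP$, whence Eccles for $P$ applies to $\hat\beta$. In the homology-detection case, $\hat\beta$ maps onto the generator $a_n\in H_nP$; analysing the bottom two cells of the stunted space $P/P^{\,n-2}$ shows that such a stably spherical class exists, by Adams' theorem \cite{Adams-Hopfinv}, only for $n\in\{1,3,7\}$, where $\alpha\in\{\eta,\nu,\sigma\}$. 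In the primary-operation case, $\hat\beta$ is detected by some $\mathrm{Sq}^{2^j}$ in $C_{\hat\beta}$, and I would identify this, through the quadratic/Browder reformulation of the Kervaire invariant for maps into projective space, with $\alpha$ being a Kervaire invariant one element, which by \cite{HHR} occur only finitely often. Either way $\alpha$ is of Hopf or Kervaire invariant one type, which is Curtis' assertion.

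I expect the main obstacle to be precisely this last translation in the first implication: converting the abstract statement that $\hat\beta$ is detected by a primary operation in its mapping cone into the assertion that $\lambda_*\beta$ has Kervaire invariant one. This requires controlling the Kahn--Priddy map on cohomology, its interaction with $\mathrm{Sq}^{2^j}$ and the relevant functional operation, and matching the outcome with Browder's theorem expressing the Kervaire invariant through a secondary operation. By contrast the second implication is comparatively formal once one grants the decomposability of the Kervaire Hurewicz images in $H_*Q_0S^0$, since there the homology suspension does all the separating work.
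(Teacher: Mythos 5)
Your second implication (Curtis $\Rightarrow$ Eccles for $S^n$) is essentially the paper's argument: adjoint down to $QS^0$, note that the Hurewicz image of a Kervaire invariant one element is a square of a primitive (hence decomposable, hence killed by the homology suspension), and conclude that only Hopf invariant one elements can survive to $H_*QS^n$; that part is fine.

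The first implication is where there is a genuine gap, and you have in effect flagged it yourself. The paper does not attempt the geometric translation you propose (Browder's quadratic reformulation of the Kervaire invariant for maps into $P$); instead it uses Lin's \emph{algebraic} Kahn--Priddy theorem, namely that $\lambda$ induces an epimorphism $\mathrm{Ext}_A^{s,t}(H^*P,\Z/2)\to\mathrm{Ext}_A^{s+1,t+1}(\Z/2,\Z/2)$, so that detection data for the lift $\beta$ on $P$ shifts Adams filtration by exactly one when pushed to $S^0$. Concretely: if $\hat\beta$ is detected by homology it lies on the $0$-line of the Adams spectral sequence for $P$, so $\alpha=\lambda_*\beta$ is detected on the $1$-line, whose surviving spherical elements are precisely the Hopf invariant one elements; if $\hat\beta$ is detected by a primary operation $Sq^{2^s}$ in its mapping cone (and the cell structure of $P$ forces the exponents to be powers of $2$), then $\hat\beta$ has Adams filtration $1$, so $\alpha$ has filtration $2$, and one then invokes the result of \cite{Za-ideal} that the only filtration-$2$ elements with nonzero unstable Hurewicz image in $H_*Q_0S^0$ are Kervaire invariant one elements. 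Without this filtration-shift mechanism your primary-operation case is not closed: it is not clear how to relate a primary operation in $C_{\hat\beta}$ directly to the secondary operation defining the Kervaire invariant of $\lambda_*\beta$ except by re-deriving Lin's theorem. Your homology-detection case (bottom cells of stunted projective spaces) can likely be made to work, but the $2$-line step is the one that needs the additional input.
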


The above statement should be well known, but we record a proof in the appendix. %upon applying standard knowledge on the Kahn-Priddy map $P\to S^0$, immediately implies
The above conjectures make predictions about the image of spherical classes under the unstable Hurewicz homomorphism $h:{\pi_*^s}X\simeq{\pi_*}QX\to H_*QX$. After the recent solution to the Kervaire invariant one problem \cite{HHR}, Curtis conjecture immediately implies that the image of ${\pi_*}Q_0S^0\to H_*Q_0S^0$ is finite. Also, Adams solution to Hopf invariant one problem together with the truth of Eccles conjecture would imply that the image of ${\pi_*}QX\to H_*QX$ is finite for $X=S^n$ with $n>0$. This motivates the following weak form of the above conjectures.

\begin{conj}[Weak geometric version of Curtis and Eccles conjectures]\label{weakconjectures}
(i) The image of the unstable Hurewicz homomorphism $h:{\pi_{*>0}}Q_0S^0\to H_*Q_0S^0$ is finite.\\
(ii) For a path connected space $X$, the image of the unstable Hurewicz homomorphism $h:{\pi_{*}}QX\to H_*QX$ is finite.
\end{conj}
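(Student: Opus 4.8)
The plan is to deduce both parts of Conjecture~\ref{weakconjectures} from a structural analysis of the spherical classes themselves, reformulating finiteness of the image of $h$ as the assertion that $h\colon{\pi_n}QX\to H_nQX$ vanishes for all but finitely many $n$. The first step is to record the two classical necessary conditions satisfied by any spherical class $x=h(\alpha)$ with $\alpha\colon S^n\to QX$. Since $QX$ is an infinite loop space and $\widetilde H_*(S^n)$ is primitive for $n>0$, the class $x$ is primitive, $x\in\pri(H_*QX)$. Since $\widetilde H^*(S^n)$ carries the trivial action of the Steenrod algebra, naturality of $Sq^i$ forces $\langle Sq^iy,x\rangle=0$ for all $y$ and all $i>0$, hence $Sq^i_*x=0$ for every $i>0$, where $Sq^i_*$ is the homology operation dual to $Sq^i$. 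Thus the image of $h$ lies in the subspace of primitive, Steenrod-annihilated classes, and it suffices to show that the \emph{realizable} members of this subspace occupy only finitely many degrees. The Dyer--Lashof computation---$H_*Q_0S^0$ is polynomial on the admissible monomials $Q^I$ of positive excess $\ex(I)>0$ applied to the canonical generator, and $H_*Q\Sigma^2X$ is the corresponding free allowable algebra on $\widetilde H_*\Sigma^2X$---makes this subspace explicit, with the coproduct governed by the Cartan formula and the action of $Sq^i_*$ on $Q^j$ governed by the Nishida relations.

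The engine, exactly as announced in the abstract, is the \emph{elimination of higher powers of $2$}. Over $\F_2$ the primitives of a connected bicommutative polynomial Hopf algebra are, after the Borel decomposition, spanned by the $2^k$-th powers of a system of primitive generators, so every primitive class is up to change of basis a sum of such powers; I would then show, using the Nishida relations together with the behaviour of a spherical class under the total squaring operation, that no $2^k$-th power with $k\geqslant 2$ can be spherical. Carrying this out in $H_*(QS^1;\Z/2)$---the delicate case corresponding to the formal substitution $X=S^{-1}$, and the source of the correction to \cite[Lemma~3.6]{Zare-Els-1} for $n=1$---and then in $H_*(Q\Sigma^2X;\Z/2)$, confines the surviving candidates to a primitive generator or the square of one. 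Combined with the Steenrod-annihilation condition, the generators that survive are forced into the Hopf pattern, in degrees $2^j-1$, and their squares into the Kervaire pattern, in degrees $2^{j+1}-2$.

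Finiteness then follows from the two input theorems cited in the introduction. Adams' Hopf invariant one theorem \cite{Adams-Hopfinv} restricts the realizable Hopf-type generators to $j\leqslant 3$, and the Hill--Hopkins--Ravenel theorem \cite{HHR} restricts the realizable Kervaire-type squares to the finitely many surviving $j$; together these settle part~(i) for $Q_0S^0$. For part~(ii) I would reduce a path-connected $X$ to the sphere case: a spherical class in $H_*QX$ either survives the homology suspension $H_*QX\to H_*X$---so that its adjoint is detected by homology---or is pushed, via the stable splitting of $QX$ and transfer arguments of the kind used later in the paper, to a spherical class of $H_*Q_0S^0$; in either case the Eccles-type detection of Conjecture~\ref{Ecclesconj}, specialised through the Proposition relating it to Conjecture~\ref{Curtisconj}, caps the realizable degrees.

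The hard part---and the reason Conjecture~\ref{weakconjectures} is stated rather than proved---is twofold, and both difficulties live in the confinement step. First, the elimination of higher powers of $2$ is naturally executed inside a finite loop space $\Omega^l\Sigma^{l+2}X$, where it yields only a degree bound of the shape $2^{l-1}l+\mathrm{const}$; this is precisely the filtered finiteness the paper does establish, but the bound diverges with $l$, so passage to the colimit $Q\Sigma^2X$ requires replacing the $l$-dependent bound by a single vanishing line valid uniformly in the loop order. Second, even granting the elimination, one must prove that the Hopf and Kervaire patterns exhaust the primitive, Steenrod-annihilated generators and their squares---that there is no exotic infinite family of such classes supporting nonzero Hurewicz images outside these patterns. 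This exhaustion statement is the unresolved core of the Curtis and Eccles conjectures, so I expect it, rather than any isolated computation, to be the genuine obstacle, and the filtered theorem to be the sharpest currently attainable approximation to the stated finiteness.
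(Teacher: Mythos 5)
The statement you were asked to prove is a \emph{conjecture} in the paper: the paper offers no proof of it, only motivation --- part (i) would follow from the Curtis conjecture (Conjecture \ref{Curtisconj}) together with the Hill--Hopkins--Ravenel theorem, and part (ii), for $X=S^n$ with $n>0$, from the Eccles conjecture (Conjecture \ref{Ecclesconj}) together with Adams' Hopf invariant one theorem --- and what the paper actually proves is the filtered approximation, Theorem \ref{main1}. Your closing paragraph reaches the same assessment and correctly identifies why no unconditional proof is available: the filtered bound $2^l(k+2)+2^{l-1}(l-2)+2$ diverges with the loop order $l$, so it cannot be pushed to the colimit $QX$, and the exhaustion of realizable primitive $A$-annihilated classes by the Hopf and Kervaire patterns is exactly the open content of the Curtis and Eccles conjectures. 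On that level your account agrees with the paper.

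However, read as a proof, the middle of your proposal contains a genuine gap which should be named precisely. The elimination of higher powers of $2$ (the paper's Theorem \ref{2^t}) shows only that if $\zeta^2\in H_*QS^1$ or $H_*Q\Sigma^2X$ is spherical then $\zeta$ is odd dimensional; equivalently, no $2^t$-th power with $t\geqslant 2$ is spherical. It does not ``confine the surviving candidates'' to classes in degrees $2^j-1$ and $2^{j+1}-2$: the space of odd-degree primitive, Steenrod-annihilated classes in $H_*Q_0S^0$ is not known to be concentrated in Hopf degrees, and nothing in the Nishida relations or the Milnor--Moore sequence rules out realizable primitives in infinitely many other degrees. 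Your sentence ``the generators that survive are forced into the Hopf pattern \dots and their squares into the Kervaire pattern'' \emph{is} the Curtis conjecture, not a consequence of the preceding computations; invoking Adams and HHR after it makes the whole argument conditional on the very conjectures this weak version is meant to approximate. The same circularity occurs in part (ii), where your reduction of a general path-connected $X$ to the sphere case explicitly appeals to ``Eccles-type detection,'' i.e. to Conjecture \ref{Ecclesconj} itself. So the proposal is a correct explanation of why the statement is plausible and why it is open, but it is not a proof --- and the paper contains none either; the honest unconditional content, for both you and the paper, is Theorem \ref{main1}.
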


Note that, there are other weakened formulations of Curtis conjecture such as ``weak conjecture on spherical classes'' due to Hung
\cite[Conjecture 1.3]{Hung-weakconjecture} (proved in \cite[Main Theorem]{HungNam}). Such conjectures are mainly stated in terms of the Lannes-Zarati homomorphism; although, they provide strong evidence but they don't immediately imply the conjecture \ref{Curtisconj} as noted in \cite{Hung-erratum}. We hope that geometric weak versions as above, allow a more accessible resolution of the above conjectures. In this note we prove a filtered version of the above conjectures. To put this in context, note that by definition $QX=\colim \Omega^i\Sigma^i X$ is filtered by spaces $\Omega^i\Sigma^i X$. %By Freudenthal suspension theorem, for any $f\in\pi_nQX$, depending on the connectivity of $X$, we may find some nonnegative integer $i$ so that $f$ does pull back to $\pi_n\Omega^i\Sigma^iX$. Note that there exist an obvious commutative diagram
%$$\xymatrix{
%\pi_n\Omega^i\Sigma^iX\ar[r]^-h\ar[d] & H_n\Omega^i\Sigma^iX\ar[d]\\
%\pi_nQX\ar[r]^-h                      & H_nQX.}$$
%So, it is natural to ask about the image of the Hurewicz homomorphism $\pi_n\Omega^i\Sigma^iX\to H_n\Omega^i\Sigma^iX$.

%It is then natural to look for spherical classes in $H_*\Omega^i\Sigma^iX$ for all $i$ in order to give some answer to the above conjectures. We prove the following.
\begin{thm}[Filtered Eccles conjecture(for double suspensions and $X=S^{-1}$)]\label{main1}
%(i) Suppose $l\geqslant 4$ and $\xi\in H_*\Omega^lS^{l+1}$ with $\dim\xi>2^{l-1}l+2$. Then $\xi$ is not a spherical class. Consequently, the image of the composition
%$${\pi_*}\Omega^lS^{l+1}\to {\pi_*}QS^l\to H_*QS^l$$
%is finite, with the composition vanishing in dimensions more than $2^{l-1}l+2$.\\
Suppose $X$ is a $CW$-complex of finite type such that for $x\in \widetilde{H}_*X$, $\dim x\leqslant k$. If $\xi\in H_*Q\Sigma^2X$ is spherical which arises from a class in ${\pi_*}\Omega^l\Sigma^{l+2}X$ then $\dim\xi\leqslant 2^l(k+2)+2^{l-1}(l-2)+2$. Consequently, the image of $h:{\pi_*}Q\Sigma^2X\to H_*Q\Sigma^2X$ when restricted to the image of the stabilisation map ${\pi_*}\Omega^l\Sigma^{l+2}X\to{\pi_*}Q\Sigma^2X$ is finite. The result is valid if we formally replace $X$ with $S^{-1}$; in particular, the image of the composition
$${\pi_*}\Omega^lS^{l+1}\to {\pi_*}QS^l\to H_*QS^l$$
is finite, with the composition vanishing in dimensions more than $2^{l-1}l+2$.
\end{thm}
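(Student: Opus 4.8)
The plan is to bound the dimension of spherical classes in $H_*Q\Sigma^2X$ that arise from $\pi_*\Omega^l\Sigma^{l+2}X$. The key structural input is that $H_*Q\Sigma^2X$ is a free commutative algebra (polynomial over $\Z/2$) on the Dyer-Lashof/iterated-operation images of a basis of $\widetilde H_*\Sigma^2X$, so a class coming through the $l$-fold loop space $\Omega^l\Sigma^{l+2}X = \Omega^l\Sigma^l(\Sigma^2 X)$ can only involve Dyer-Lashof operations $Q^I$ whose excess is compatible with the allowed number of loop coordinates. The free loop-space filtration caps how high an operation $Q^{i_1}\cdots Q^{i_s}$ can be applied: roughly, the class must be supported on operations that are already present at the $\Omega^l\Sigma^l$ stage, which forces the length and the indices of the admissible $Q^I$ to be constrained.

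So the **first step** I would take is to recall the Araki–Kudo/Dyer–Lashof description of $H_*\Omega^l\Sigma^l Y$ (with $Y=\Sigma^2 X$) and its image in $H_*QY$: the stabilisation map sends a monomial basis to monomials in $Q^I y$ where $y$ runs over $\widetilde H_*Y$ and $I$ is admissible of excess at least $\dim y$, but the $l$-fold loop constraint limits the operations to those with $\mathrm{excess}(I) + \dim y < l + \dim y$, i.e. $\mathrm{excess}(I) < l$, and caps the number of applicable operations. The **second step** is the heart of the argument and is where I expect the main work to go: one must eliminate higher $2$-powers from being spherical. Concretely, a spherical class is primitive and is annihilated by the (dual) Steenrod operations that detect squares; using the Nishida relations together with the interaction between $\mathrm{Sq}^i_*$ and the Dyer–Lashof operations, one shows that any monomial of the form $(Q^I y)^{2^j}$ with $j$ large cannot be spherical because it fails primitivity or is hit by a Steenrod operation that a spherical class must vanish on. This is exactly the correction flagged in the abstract (the $n=1$ case of \cite[Lemma 3.6]{Zare-Els-1}), so I would carry out the Nishida-relation computation carefully in low top degree to rule out $(Q^Iy)^{2^j}$ for the relevant $j$.

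The **third step** is the dimension count. Having restricted to admissible $Q^I$ with $\mathrm{excess}(I)<l$ applied to $y\in\widetilde H_*\Sigma^2X$ with $\dim y \le k+2$, the top possible degree of a surviving monomial is bounded: a single allowed operation at most doubles and adds, so iterating $l$ times on a class of degree $k+2$ gives the stated $2^l(k+2) + 2^{l-1}(l-2) + 2$ (the extra $2^{l-1}(l-2)$ term tracks the accumulated shifts from successive operations whose indices are forced to grow by the admissibility/excess conditions). Summing the geometric contributions of the operation indices yields precisely this closed form, and since there are only finitely many admissible monomials below a fixed dimension, the image is finite.

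**The $X=S^{-1}$ specialisation** is formal once the general bound is in place: setting $\widetilde H_*\Sigma^2 S^{-1} = \widetilde H_* S^1$ with $k+2=1$, i.e. $k=-1$, the bound $2^l(k+2)+2^{l-1}(l-2)+2 = 2^l + 2^{l-1}(l-2) + 2 = 2^{l-1}l + 2$, matching the claimed vanishing range for the composition $\pi_*\Omega^lS^{l+1}\to\pi_*QS^l\to H_*QS^l$. The **hardest step** will be the second one — the Nishida-relation argument ruling out the even powers $(Q^I y)^{2^j}$ — because it is precisely the place where the earlier proof in \cite{Zare-Els-1} broke down at $n=1$, and one must check that the low-degree edge cases (where admissible operations of small excess overlap with squares of low-dimensional generators) are handled without gaps.
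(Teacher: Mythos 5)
Your overall skeleton (reduce to squares, rule out higher $2$-powers with Nishida relations, then count dimensions) is close to the paper's strategy, and your second step is indeed the paper's Theorem \ref{2^t}. But there is a genuine gap in how you obtain the dimension bound. You assert that the $l$-fold loop filtration ``caps the number of applicable operations'' and that the indices of an admissible $Q^I$ are ``forced to grow by the admissibility/excess conditions.'' Neither is true: in $H_*\Omega^l\Sigma^l Y$ the lower-indexed description is $\Z/2[Q_{j_1}\cdots Q_{j_r}y:0<j_1\leqslant\cdots\leqslant j_r<l]$, so only the individual entries are bounded by $l$, while the length $r$ is unbounded (e.g. $Q_1Q_1\cdots Q_1y$ for arbitrary $r$), and admissibility only forces $J$ to be nondecreasing, not strictly increasing. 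Hence no finite dimension bound follows from the loop-space constraint together with admissibility alone, and your third step does not close as written.

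The missing ingredient is Wellington's theorem (\cite[Theorem 5.3]{Wellington-thesis}, quoted as Theorem \ref{Wellington5.3} in the paper): an odd-dimensional $A$-annihilated element of $M_\infty Y$ must lie in $M_\infty^{-}Y$, i.e. every admissible $Q^I$ occurring in it has only odd upper entries. The paper applies this to the odd-dimensional square root $\zeta$ of the spherical class, after first showing $\zeta$ exists (via the dichotomy $\sigma_*\xi\neq 0$, handled by induction on the number of suspensions or by the earlier results, versus $\sigma_*\xi=0$, which gives decomposability and hence a square of a primitive by Milnor--Moore) and noting that $\zeta$ is itself $A$-annihilated by the Cartan formula $Sq^{2t}_*\zeta^2=(Sq^t_*\zeta)^2$. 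In lower-indexed terms, ``only odd upper entries'' means $j_k+j_{k+1}$ is odd for all $k$, which combined with nondecreasingness forces $J$ to be strictly increasing; only then does $0<j_1<\cdots<j_r<l$ cap the length at $l-1$ and make $J=(1,2,\ldots,l-1)$ applied to a top class of $\widetilde{H}_*\Sigma^2X$ the extremal case, yielding $2^l(k+2)+2^{l-1}(l-2)+2$. Your $X=S^{-1}$ specialisation of the final formula is arithmetically correct once the bound is actually established.
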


Recall that the results of \cite[Theorem 2.2]{Zare-Els-1} and \cite[Theorem 1.11]{Zare-PEMS} deal with the cases of $0<l<4$ related to the case of $X=S^k$ with $k\geqslant -1$ of the above theorem where a complete verification of spherical classes in single, double, and triple loop spaces is achieved. Hence, the above theorem in the case of $X=S^{-1}$ completes proof of the finiteness property of the image of ${\pi_*}QS^1\to H_*QS^1$ when restricted to finite loop spaces $\Omega^lS^{l+1}$ with $l>0$.\\

An immediate corollary of the above Theorem, upon applying homology suspension is the following `filtered weak geometric version of Curtis conjecture (up to one suspension).

\begin{crl}\label{finitecurtisuptosuspension}
Suppose $\xi\in H_*Q_0S^0$ is a spherical class with $\sigma_*\xi\neq 0$ which arise from a class in $H_*\Omega^{l+1}S^{l+1}$. Then $\dim\xi\leqslant 2^{l-1}l+1$. Consequently, the image of the composition
$${\pi_*}\Omega^{l+1}S^{l+1}\to {\pi_*}Q_0S^0\to H_*Q_0S^0\to H_{*+1}QS^1$$
is finite, with the composition vanishing in dimensions more than $2^{l-1}l+1$.
\end{crl}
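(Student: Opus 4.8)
The plan is to deduce the corollary directly from Theorem \ref{main1} in its $X=S^{-1}$ form by transporting the spherical class along the homology suspension. Recall the two loop-space identifications $\Omega^{l+1}S^{l+1}=\Omega(\Omega^lS^{l+1})$ and $Q_0S^0=\Omega_0QS^1$, so that the homology suspensions
$$\sigma_*\colon H_*\Omega^{l+1}S^{l+1}\to H_{*+1}\Omega^lS^{l+1},\qquad \sigma_*\colon H_*Q_0S^0\to H_{*+1}QS^1$$
are the ordinary loop-space suspensions for these based loop spaces, the second being exactly the map appearing in the displayed composition. Writing $S^{l+1}=\Sigma^lS^1$, the stabilisation maps $\Omega^lS^{l+1}\to QS^1$ and $\Omega^{l+1}S^{l+1}\to Q_0S^0$ are (iterated) loop maps, and since $\sigma_*$ is natural with respect to loop maps, the square having these two stabilisations as horizontal arrows and the two $\sigma_*$ as vertical arrows commutes.

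Now suppose $\xi\in H_nQ_0S^0$ is spherical with $\sigma_*\xi\neq 0$ and arises from $\Omega^{l+1}S^{l+1}$; that is, $\xi$ is the stabilisation of $h(f)$ for some $f\in{\pi_n}\Omega^{l+1}S^{l+1}$. First I would pass to the adjoint $\widehat f\in{\pi_{n+1}}\Omega^lS^{l+1}$ of $f$ under the loop-suspension adjunction. The key naturality fact is that homology suspension intertwines this adjunction with the Hurewicz homomorphism, giving $h(\widehat f)=\sigma_*h(f)$; in particular $\sigma_*$ sends spherical classes to spherical classes. Stabilising $\widehat f$ into ${\pi_{n+1}}QS^1$ and invoking the commuting square above, the Hurewicz image of this stabilised class in $H_{n+1}QS^1$ is precisely $\sigma_*\xi$. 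Thus $\sigma_*\xi$ is a spherical class in $H_{n+1}QS^1$ which arises from ${\pi_*}\Omega^lS^{l+1}$, and by hypothesis it is nonzero.

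At this stage Theorem \ref{main1}, with $X$ formally equal to $S^{-1}$ (so $\Sigma^2X=S^1$, $\Sigma^{l+2}X=S^{l+1}$, and $k=-1$, yielding the bound $2^{l-1}l+2$), applies to $\sigma_*\xi$ and gives $\dim(\sigma_*\xi)=n+1\leqslant 2^{l-1}l+2$, hence $\dim\xi=n\leqslant 2^{l-1}l+1$. The finiteness and vanishing statements follow at once: any nonzero element in the image of the displayed composition in dimension $>2^{l-1}l+1$ would, by definition, produce a spherical $\xi$ with $\sigma_*\xi\neq 0$ arising from $\Omega^{l+1}S^{l+1}$ in that dimension, contradicting the bound; and since $H_*Q_0S^0$ is finite-dimensional in each degree, only finitely many classes can occur in the remaining range $\ast\leqslant 2^{l-1}l+1$, so the image is finite.

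The argument involves no genuine obstacle, only two naturality inputs that must be stated carefully: that $\sigma_*$ carries a spherical class $h(f)$ to the spherical class $h(\widehat f)$ of the adjoint (the compatibility of homology suspension with the loop-suspension adjunction, verified through the evaluation map $\Sigma\Omega Y\to Y$), and that the stabilisation–suspension square commutes, so that the filtration level $l+1$ detecting $\xi$ on $\Omega^{l+1}S^{l+1}$ drops to level $l$ on $\Omega^lS^{l+1}$ after suspension. Both facts are standard, which is why the corollary is indeed immediate once Theorem \ref{main1} is available.
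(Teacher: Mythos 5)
Your proposal is correct and follows exactly the route the paper intends: the paper offers no separate proof, stating only that the corollary is ``immediate upon applying homology suspension,'' and your argument --- passing to the adjoint $\widehat f$ so that $h(\widehat f)=\sigma_*h(f)$, using the commuting stabilisation--suspension square to identify $\sigma_*\xi$ with a spherical class in $H_{*+1}QS^1$ arising from $\Omega^lS^{l+1}$, and then invoking the bound $2^{l-1}l+2$ from Theorem \ref{main1} with $X=S^{-1}$ --- is precisely that omitted verification. No discrepancy with the paper's approach.
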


An important corollary of Theorem \ref{main1} is on the image of certain transfer maps. Let $G$ be a compact Lie group with Lie algebra $\mathfrak{g}$ on which $G$ acts through its adjoint representation; write $\mathrm{ad}_G=EG\times_G\mathfrak{g}\to BG$ for the adjoint bundle associated to this action. For the inclusion of a closed Lie group $H<G$ and a (virtual) vector bundle $\alpha\to BG$, the transfer map of Becker-Schultz-Mann-Miller-Miller is a stable map \cite{Milequi}, \cite{BeckerSchultz1} (see also \cite{KZproj}) $BG^{\mathrm{ad}_G\oplus\alpha}\to BH^{\mathrm{ad}_H\oplus\alpha|_{BH}}$ where $\alpha|_{BH}$ denotes restricted of $\mathrm{ad}_G$ to $BH$. We are interested in transfer maps associated to $1<G$ which after twisting with $-\mathrm{ad}_G$ yield a stable map $\Sigma^{\dim\mathfrak{g}}BG_+\to S^0$. The following now is immediate.

\begin{crl}\label{imageoftransfer}
Let $G$ be a compact Lie group with Lie algebra $\mathfrak{g}$ so that $\dim\mathfrak{g}>1$. Then, for any $n>0$ and $l>0$, the image of the composition
$${_2\pi_*}\Omega^l\Sigma^{l+\dim\mathfrak{g}}BG_+^{[n]}\to{_2\pi_*}Q\Sigma^{\dim\mathfrak{g}}BG_+^{[n]}\to{_2\pi_*}Q\Sigma^{\dim\mathfrak{g}}BG_+\to {_2\pi_*}Q_0S^0\to H_*Q_0S^0$$
is finite. Here, for a $CW$-complex $X$, we write $X^{[n]}$ for the $n$-th skeleton of $X$.
\end{crl}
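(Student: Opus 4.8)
The plan is to deduce this directly from Theorem \ref{main1} by recognising the suspension $\Sigma^{\dim\mathfrak g}BG_+^{[n]}$ as a double suspension and then transporting the conclusion along the structure maps using the naturality of the Hurewicz homomorphism. Write $d=\dim\mathfrak g$ and note that the hypothesis $\dim\mathfrak g>1$ forces $d\geqslant 2$, so $d-2\geqslant 0$. Set $X=\Sigma^{d-2}BG_+^{[n]}$. Since $BG$ has finite type homology, $BG_+^{[n]}$ is a finite $CW$-complex, hence so is $X$; in particular $X$ is of finite type and $\widetilde H_*X$ is finite dimensional, with top nonzero degree at most $n+d-2$. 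Thus the hypotheses of Theorem \ref{main1} hold for this $X$, and by construction $\Sigma^2X=\Sigma^{d}BG_+^{[n]}$ and $\Sigma^{l+2}X=\Sigma^{l+d}BG_+^{[n]}$, so the theorem applies to $X$ verbatim.

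First I would record the output of Theorem \ref{main1} in the present notation: the composite
\[
{_2\pi_*}\Omega^l\Sigma^{l+d}BG_+^{[n]}\longrightarrow{_2\pi_*}Q\Sigma^{d}BG_+^{[n]}\xrightarrow{\ h\ }H_*Q\Sigma^{d}BG_+^{[n]}
\]
has finite image, all of it concentrated in degrees bounded by an explicit constant depending only on $l$, $n$, and $d$ (namely $2^l(n+d)+2^{l-1}(l-2)+2$, taking $k=n+d-2$). This is precisely the restriction of $h$ to the image of the stabilisation map, which Theorem \ref{main1} asserts to be finite.

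Next I would use naturality of the Hurewicz map to identify the composition of the corollary with the finite-image composite above followed by homology maps. The skeletal inclusion $BG_+^{[n]}\hookrightarrow BG_+$ and the twisted transfer $\Sigma^{d}BG_+\to S^0$ each induce maps on the associated $Q$-spaces, and hence naturality squares intertwining $h$ on ${_2\pi_*}$ and on $H_*$. Concatenating the two squares for the composite $Q\Sigma^{d}BG_+^{[n]}\to Q\Sigma^{d}BG_+\to Q_0S^0$ gives
\[
{_2\pi_*}Q\Sigma^{d}BG_+^{[n]}\to{_2\pi_*}Q\Sigma^{d}BG_+\to{_2\pi_*}Q_0S^0\xrightarrow{\ h\ }H_*Q_0S^0
\]
equal to
\[
{_2\pi_*}Q\Sigma^{d}BG_+^{[n]}\xrightarrow{\ h\ }H_*Q\Sigma^{d}BG_+^{[n]}\to H_*Q\Sigma^{d}BG_+\to H_*Q_0S^0,
\]
where the last two arrows are induced on homology by the inclusion and the transfer, and where $\Sigma^{d}BG_+$ being connected guarantees that the transfer lands in the base-point component so that $Q_0S^0$ is the correct target. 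Precomposing with the stabilisation ${_2\pi_*}\Omega^l\Sigma^{l+d}BG_+^{[n]}\to{_2\pi_*}Q\Sigma^{d}BG_+^{[n]}$ exhibits the entire composition of the corollary as the finite-image composite from Theorem \ref{main1} followed by two homology maps. Since the image of a finite set under any function is finite, the stated image is finite.

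The argument is essentially bookkeeping, and I expect the only point needing care to be the compatibility underlying the naturality diagram: that the stable twisted transfer $\Sigma^{d}BG_+\to S^0$ really does induce the unstable map $Q\Sigma^{d}BG_+\to Q_0S^0$ intertwining the two Hurewicz homomorphisms, and that its precomposition with the skeletal inclusion agrees with the corresponding map out of $Q\Sigma^{d}BG_+^{[n]}$. Once this single commutativity is in place the conclusion is immediate, because Theorem \ref{main1} has already bounded the relevant spherical classes in $H_*Q\Sigma^{d}BG_+^{[n]}$ by an explicit degree, and finiteness is preserved under pushing forward along the inclusion and transfer on homology.
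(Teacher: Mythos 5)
Your proposal is correct and is exactly the argument the paper intends: the paper gives no separate proof, declaring the corollary ``immediate'' from Theorem \ref{main1}, and your unpacking --- setting $X=\Sigma^{\dim\mathfrak g-2}BG_+^{[n]}$ (using $\dim\mathfrak g>1$ to make $\Sigma^{\dim\mathfrak g}BG_+^{[n]}$ a double suspension), invoking the theorem, and pushing the finite image forward through the Hurewicz naturality squares for the skeletal inclusion and the twisted transfer --- is precisely the intended bookkeeping.
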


The cases with $G=U(1)^{\times n}$ with $n>0$, $G=S^3$ are of special interest, as in homotopy one would be interested in framed manifolds with  $U(1)^{\times n}$-structures or $S^3$-structures. By Corollary \ref{imageoftransfer}, for any $i,j$ with $i+j>0$, the image of the composition
$${\pi_*}\Omega^l\Sigma^{l+2}(\C^iP_+\wedge\C P^j_+)\to{\pi_*}Q(\Sigma^2\C^iP_+\wedge\C P^j_+)\to {\pi_*}Q(\Sigma^2\C P_+\wedge\C P_+)\to {\pi_*}Q_0S^0\to H_*Q_0S^0$$
is finite. Similarly, for any $n>0$, the image of the composition
$${\pi_*}\Omega^l\Sigma^{l+3}\Q P^n_+\to{\pi_*}Q\Sigma^3\Q P^n_+\to{\pi_*}Q\Sigma^3\Q P_+\to{\pi_*}Q_0S^0\to H_*Q_0S^0$$
is finite.\\

Note that the finiteness of the image of the filtered unstable Hurewicz homomorphism Theorem \ref{main1} follows from the existence of explicit upper bounds on the dimension of spherical classes. The case of $X=S^{-1}$ of this theorem generalises \cite[Theorem 2.3]{Zare-Els-1} to the case $n=1$. The proof is almost analogous to that of \cite[Theorem 2.3]{Zare-Els-1} and will be based on eliminating higher powers of $2$ in $H_*\Omega^lS^{l+1}$ from begin spherical. We shall prove this latter by choosing a suitable basis for primitive elements in $H_*Q_0S^0$. We have the following.

\begin{thm}\label{2^t}
(i) Suppose $\zeta^2\in H_*QS^1$ is a spherical class, then $\zeta$ is odd dimensional.\\
(ii) Suppose $X$ is an arbitrary space of finite type and $\zeta^2\in H_*Q\Sigma^2X$ is spherical, then $\zeta$ is odd dimensional.
\end{thm}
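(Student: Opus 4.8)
\emph{Approach.} The plan is to extract from sphericity of $\zeta^2$ the two standard necessary conditions and then transport them to $\zeta$ by exploiting that the ambient homology is a polynomial algebra. First I would record that any spherical class $\xi=h(f)$, $f\colon S^N\to QY$, is primitive (it is the Hurewicz image of a coalgebra map out of the primitively generated $H_*S^N$) and is annihilated by every lower Steenrod operation $Sq^i_*$ with $i>0$ (naturality of the Steenrod action together with triviality of the reduced Steenrod action on $S^N$). Applying this to $\xi=\zeta^2$ gives $Sq^i_*(\zeta^2)=0$ for all $i>0$ and $\overline\psi(\zeta^2)=0$, where $\overline\psi$ denotes the reduced coproduct.

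Next I would descend these conditions to $\zeta$. Since $H_*QS^1$ (respectively $H_*Q\Sigma^2X$) is a polynomial $\Z/2$-algebra, the Frobenius $x\mapsto x^2$ is injective, and it remains injective on the (again polynomial) tensor square. The homology Cartan formula gives $Sq^{2s}_*(\zeta^2)=(Sq^s_*\zeta)^2$ and $Sq^{2s+1}_*(\zeta^2)=0$, so $Sq^s_*\zeta=0$ for all $s>0$; writing $\overline\psi\zeta=\sum_i\zeta_i'\otimes\zeta_i''$, the identity $\overline\psi(\zeta^2)=\sum_i(\zeta_i')^2\otimes(\zeta_i'')^2$ exhibits $\overline\psi(\zeta^2)$ as the Frobenius of $\overline\psi\zeta$, so injectivity forces $\overline\psi\zeta=0$. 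Thus $\zeta$ is itself primitive and annihilated by all $Sq^i_*$ with $i>0$. I would then pin down the shape of $\zeta$ using the suitable basis of primitives advertised in the introduction: because $\iota_1$ (respectively each basis element of $\widetilde H_*\Sigma^2X$) is primitive and every $Q^s$ preserves primitivity (as $Q^s$ of the unit vanishes in positive degrees, one computes $\psi Q^s y=Q^sy\otimes 1+1\otimes Q^sy$ for $y$ primitive), the algebra is primitively generated and its primitives are spanned by the $2^i$-th powers of the Dyer--Lashof generators $Q^Ix$. Hence $\zeta$ is a sum of such powers, all in degree $\dim\zeta$.

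The hard part is the final step: ruling out $\dim\zeta$ even. The obstruction is structural, since every condition obtained so far is symmetric under squaring -- indeed $Q^1\iota_1=\iota_1^2$ is an even-dimensional primitive class annihilated by all $Sq^i_*$, while its own square $Q^2Q^1\iota_1$ is not spherical -- so primitivity and Steenrod-annihilation alone cannot detect the parity, and one must feed in genuine sphericity. The route I would pursue is to observe that $\zeta^2$ is decomposable and therefore dies under the homology suspension $\sigma_*\colon H_*QS^1\to H_{*+1}QS^2$, so the stable adjoint $\alpha\in\pi^s_{2\dim\zeta-1}$ of the detecting map has nonzero Hurewicz image before suspension but vanishing image after it; such a drop is governed by the James--Hopf invariant in the $EHP$ sequence, and I would compute that this invariant forces $\dim\zeta$ to be odd. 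An alternative, closer to the framing of the introduction, is that a spherical class which is not stably spherical must be detected by a primary operation in the relevant mapping cone, whence Adams' Hopf invariant one theorem (and, in higher filtration, Hill--Hopkins--Ravenel) constrains $\dim\zeta^2$ and thus the parity of $\dim\zeta$. Either way, isolating and justifying this last parity constraint is the principal obstacle; everything preceding it is formal.

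Part (ii) is then essentially verbatim. As $\widetilde H_*\Sigma^2X$ is a suspension, it is a trivial coalgebra in which every class is primitive, so $H_*Q\Sigma^2X$ is again polynomial and primitively generated on the classes $Q^Ix$. The three reduction steps go through with $\iota_1$ replaced by a homogeneous basis of $\widetilde H_*\Sigma^2X$; the only bookkeeping change is that the Nishida relations now also involve the possibly nonzero classes $Sq^i_*x$, which does not affect the squaring reductions of the second paragraph. The concluding parity argument is then applied one basis element at a time to the detecting homotopy class.
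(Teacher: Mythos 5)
Your reductions in the first two paragraphs (primitivity and $A$-annihilation descend from $\zeta^2$ to $\zeta$ via injectivity of the Frobenius on a polynomial algebra and the Cartan formula) are correct, but, as you yourself diagnose, they are symmetric under squaring and cannot see the parity of $\dim\zeta$. The proof therefore stands or falls on the final step, and that step is missing: neither of the two routes you sketch is carried out, and neither is viable as stated. The ``alternative'' route assumes that a spherical class not detected by homology is detected by a primary operation in the mapping cone --- but that is precisely the content of Eccles' conjecture, which is unproven and for which this theorem is meant to provide evidence, so invoking it is circular. The EHP/James--Hopf route is left at ``I would compute that this invariant forces $\dim\zeta$ to be odd'' with no computation, and it is unclear how a James--Hopf invariant would even be organised for a map into $QS^1$ rather than into a sphere.

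The paper's actual parity argument is concrete and quite different: it adjoints \emph{down} one loop rather than reasoning about suspension upward. Writing $\zeta=\sum Q^Ix_1$ of dimension $d$ (so $\zeta^2=Q^d\zeta$), the adjoint $f'\colon S^{2d-1}\to Q_0S^0$ has $h(f')=Q^d\bigl(\sum Q^I[1]*[-2^{l(I)+1}]\bigr)+D$ with $D$ decomposable. Using the Milnor--Moore sequence and the explicit basis of primitives built from the square-root map (Theorem \ref{primitivebasis}), the paper shows the indecomposable part is a sum of primitives, so $D$ is itself primitive as well as decomposable, hence a square, hence zero since it lives in the odd dimension $2d-1$. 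Then the Nishida relation $Sq^1_*Q^{d}=Q^{d-1}$ (valid for $d$ even), applied to $h(f')=Q^d(\cdots)$ where the argument has dimension $d-1$, yields $Sq^1_*h(f')=(\cdots)^2\neq 0$, contradicting the fact that $h(f')$, being spherical, is $A$-annihilated. The same scheme gives part (ii), where the correction term $D$ dies even more easily because every class of $H_*\Sigma X$ is primitive. This adjoint-down-and-apply-$Sq^1_*$ mechanism is the idea your proposal lacks, and without it the theorem is not proved.
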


Note that the map $\Omega^lS^{l+1}\to QS^1$ is the $l$-fold loop of the stabilisation map $S^{l+1}\to QS^{l+1}$, so in particular induces a multiplicative map. As the map in homology is injective, hence the above theorem also eliminates higher powers of $2$ in $H_*\Omega^lS^{l+1}$ from being spherical. We note that the proof of \cite[Lemma 3.6]{Zare-Els-1} in the case of $n=1$ is not correct as it is based on the false assumption that $[1]\in H_*QS^0$ is primitive which is not. Our result above then provides a correct proof for \cite[Lemma 3.6]{Zare-Els-1}.

As an application, we record one application to the study of `unstable' bordism groups of immersions \cite{AsadiEccles-determining}. Consider a $k$-dimensional vector bundle $\xi$ with $k>0$, a smooth manifold $N$ with $\dim(N)\geqslant 0$, and nonnegative integers $m,d$. The following is well known (see for example
\cite[Theorem 1.1]{KoschorkeSanderson}).

\begin{prp}\label{unstablebordismgroup}
Let $\xi$ be a $k$-dimensional vector bundle, $k\geqslant 0$, so that $T(\xi)$ is path connected. Let $\mathrm{Imm}_\xi^d(\R^m\times N)$ denote the set of bordism classes of triples $(M,f,\iota)$ where $f:M\to\R^m\times N$ is a codimension $k$ immersion with a $\xi$-structure on the normal bundle $\nu_f$, $\iota:M\to\R^d\times (\R^m\times N)$ an embedding with a splitting of its normal bundle $\nu_\iota\simeq\R^d\oplus\nu_f$. Then, there exists a $1$-$1$ correspondence (a group isomorphism if $m+d>0$)
$$\Phi^{N,\xi}_{m,d}:\mathrm{Imm}_\xi^d(\R^m\times N)\longrightarrow [N_+,\Omega^{m+d}\Sigma^dT(\xi)].$$
\end{prp}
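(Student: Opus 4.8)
The plan is to realize $\Phi^{N,\xi}_{m,d}$ by the Pontrjagin--Thom construction and to produce an explicit geometric inverse by transversality. Given a triple $(M,f,\iota)$, the embedding $\iota:M\to\R^d\times\R^m\times N=\R^{m+d}\times N$ has a normal bundle $\nu_\iota\simeq\R^d\oplus\nu_f$, and the $\xi$-structure on $\nu_f$ is a bundle map $\nu_f\to\xi$ covering some map $M\to B$, where $B$ is the base of $\xi$. Combining this with the identity on the $\R^d$ summand yields a bundle map $\nu_\iota\to\epsilon^d\oplus\xi$ whose Thomification is a map $T(\nu_\iota)\to T(\epsilon^d\oplus\xi)=\Sigma^dT(\xi)$. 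First I would fix a tubular neighbourhood of $\iota(M)$, identify it with the total space of $\nu_\iota$, and collapse its complement to the basepoint to obtain a collapse map $(\R^{m+d}\times N)^+\to T(\nu_\iota)$. Assuming $N$ is closed so that $(\R^{m+d}\times N)^+=\Sigma^{m+d}N_+$, the composite gives $c:\Sigma^{m+d}N_+\to\Sigma^dT(\xi)$, and the $(m+d)$-fold loop--suspension adjunction
$$[\Sigma^{m+d}N_+,\Sigma^dT(\xi)]\xrightarrow{\ \cong\ }[N_+,\Omega^{m+d}\Sigma^dT(\xi)]$$
turns the class of $c$ into the sought element (the hypothesis that $T(\xi)$ is path connected guarantees the loop space is well behaved). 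I would set $\Phi^{N,\xi}_{m,d}(M,f,\iota)$ equal to this class.

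Second, I would verify that $\Phi^{N,\xi}_{m,d}$ is well defined. Distinct tubular neighbourhoods are ambient isotopic, so the collapse map is determined up to homotopy. A bordism of triples is an embedding of a bordism $W$ into $I\times\R^d\times\R^m\times N$ whose normal bundle splits compatibly; running the same collapse over $I$ produces a homotopy between the two collapse maps, so bordant triples give the same class. Conversely, to build the inverse I would represent a class in $[N_+,\Omega^{m+d}\Sigma^dT(\xi)]$ by its adjoint $\bar g:\Sigma^{m+d}N_+\to\Sigma^dT(\xi)$, smooth $\bar g$ away from the basepoints, and make it transverse to the zero section $B\subset T(\xi)\subset\Sigma^dT(\xi)$. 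The preimage $M:=\bar g^{-1}(B)$ is then a smooth submanifold of $\R^{m+d}\times N$ of codimension $d+k$, which furnishes the required embedding $\iota$; its composite with the projection $\R^d\times\R^m\times N\to\R^m\times N$ is the codimension-$k$ immersion $f$, and the normal derivative of $\bar g$ supplies the isomorphism $\nu_\iota\simeq\epsilon^d\oplus\nu_f$ together with the $\xi$-structure $\nu_f\to\xi$. Standard Pontrjagin--Thom arguments then show these constructions are mutually inverse: collapsing a transverse preimage recovers $\bar g$ up to homotopy, and taking the transverse preimage of a collapse map recovers $(M,f,\iota)$ up to bordism.

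Finally, when $m+d>0$ the target $\Omega^{m+d}\Sigma^dT(\xi)$ is at least a single loop space, hence an $H$-group (homotopy commutative once $m+d\geqslant 2$), so $[N_+,\Omega^{m+d}\Sigma^dT(\xi)]$ carries a natural group structure. On the geometric side the sum is disjoint union of immersions placed side by side in two disjoint half-spaces of the extra $\R$ factor made available precisely by $m+d>0$; checking that $\Phi^{N,\xi}_{m,d}$ intertwines these two operations upgrades the bijection to an isomorphism of groups.

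The main obstacle I anticipate is the transversality/inverse step: one must check carefully that discarding the $\R^d$ directions leaves $M$ \emph{immersed} rather than merely embedded in $\R^m\times N$, and that the normal data of the transverse map $\bar g$ assembles into a genuine $\xi$-structure on $\nu_f$ together with the prescribed splitting of $\nu_\iota$. Keeping track of the basepoint locus and of the non-compact directions when $N$ is not closed, so that $(\R^{m+d}\times N)^+$ is handled correctly, is the other delicate point; I would either assume $N$ closed or phrase everything in terms of proper immersions and one-point compactifications.
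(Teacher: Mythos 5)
The paper does not prove this proposition; it is quoted as well known from Koschorke--Sanderson, and the only indication of proof in the text is the sketch in Section 5: the forward map is the Pontrjagin--Thom collapse applied to the embedding $\iota$, and the reverse direction is obtained from transversality \emph{together with} the multi-compression theorem of Rourke and Sanderson (stated in the paper as ``The Multi-Compression Theorem''). Your forward construction, well-definedness argument, and group-structure discussion all match this standard route and are fine as a sketch.

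The genuine gap is exactly at the point you flag and then leave unresolved: producing the immersion $f$ into $\R^m\times N$ from the embedding $M\subset\R^{m+d}\times N$ obtained by transversality. It is not a matter of ``checking carefully'' that the projection $\R^d\times(\R^m\times N)\to\R^m\times N$ restricts to an immersion on $M$ --- in general it does \emph{not} (the tangent spaces of $M$ can contain $\R^d$-directions). What the splitting $\nu_\iota\simeq\R^d\oplus\nu$ gives you is $d$ linearly independent normal vector fields on the embedded $M$, and the content of the Rourke--Sanderson multi-compression theorem (equivalently, the Hirsch--Smale immersion-theoretic argument in Koschorke--Sanderson) is that, provided the residual codimension $k$ is positive, the embedding can be \emph{isotoped} so that these normal fields become parallel to the $\R^d$-factor, after which the projection is an immersion with normal bundle $\nu$ carrying the induced $\xi$-structure. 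Without invoking this (or an equivalent immersion-theoretic statement) your inverse map is not defined, and this is the one non-formal ingredient of the proposition. A secondary, smaller point: the identification $[\Sigma^{m+d}N_+,\Sigma^dT(\xi)]$ of collapse maps requires either $N$ closed or a compactly supported/proper formulation, which you correctly note but should fix at the outset since the paper's main application takes $N=\R^{n+k}$.
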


Note that the groups $\mathrm{Imm}_\xi^d(\R^m\times N)$ stabilise as $d\to +\infty$ in an obvious manner. This allows to filter the bordism groups $\mathrm{Imm}_\xi(\R^m\times N)$ of \cite{AsadiEccles-determining} by $\mathrm{Imm}_\xi(\R^m\times N)=\colim_{d\to+\infty} \mathrm{Imm}_\xi^d(\R^m\times N)$. According to
\cite[Lemma 2.2, Theorem 2.4]{AsadiEccles-determining} the bordism class of a codimension $k$ immersion $f:M\to\R^{n+k}$ with a $\xi$-structure on its normal bundle, as well as the bordism class of its self-intersection manifolds, is determined %by the Stiefel-Whitney numbers of the stable normal bundle of $f$ where the latter is determined
by the unstable Hurewicz image of an element of $\pi_{n+k}QT(\xi)$ which corresponds to $f$ under $\Phi^{\R^{n+k},\xi}_{0,+\infty}$. As an application of Theorem \ref{main1} in the case of $X=S^n$ with $n>0$, we have the following.

\begin{thm}\label{SWnumbers1}
Suppose $f:M\to\R^{n+k}$ is a framed immersion which extends to an embedding $M\to\R^d\times\R^{n+k}$. If $n\ggg k,d$, ie $n$ is very large compared to $k$ and $d$ (in fact exponentially with respect to $d$) then all of self-intersection manifolds of the image of (the bordism class of) $f$ under the stabilisation map
$\mathrm{Imm}_{\R^k}^d(\R^{n+k})\to \mathrm{Imm}_{\R^k}(\R^{n+k})$ are boundary.
\end{thm}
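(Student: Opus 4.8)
The plan is to convert the geometric hypothesis into a statement about the filtered unstable Hurewicz homomorphism and then feed it into the dimension bound of Theorem \ref{main1}. First I would record that a framed normal structure is a $\xi$-structure for the trivial bundle $\xi=\R^k$ over a point, so that $T(\xi)=S^k$; since the codimension satisfies $k\geqslant 1$ we may write $S^k=\Sigma^2S^{k-2}$, where $S^{k-2}$ is interpreted as $S^{-1}$ when $k=1$. The hypothesis that $f:M\to\R^{n+k}$ extends to an embedding $M\to\R^d\times\R^{n+k}$ says precisely that the bordism class of $f$ lives in $\mathrm{Imm}^d_{\R^k}(\R^{n+k})$. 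Applying Proposition \ref{unstablebordismgroup} with $N$ a point and $m=n+k$, this class corresponds to an element
$$\alpha_f\in[\,\mathrm{pt}_+,\Omega^{n+k+d}\Sigma^dS^k\,]=\pi_{n+k}\Omega^d\Sigma^dS^k=\pi_{n+k}\Omega^d\Sigma^{d+2}S^{k-2},$$
and the stabilisation $\mathrm{Imm}^d_{\R^k}(\R^{n+k})\to\mathrm{Imm}_{\R^k}(\R^{n+k})$ sends it to the image of $\alpha_f$ under $\pi_{n+k}\Omega^d\Sigma^{d+2}S^{k-2}\to\pi_{n+k}Q\Sigma^2S^{k-2}=\pi_{n+k}QS^k$.

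Next I would apply Theorem \ref{main1} with $X=S^{k-2}$ (so that the relevant homology bound there is $k-2$) and $l=d$. This is exactly the composite along which $\alpha_f$ travels, so it forces the unstable Hurewicz image of $\alpha_f$ in $H_{n+k}QS^k$ to vanish as soon as its dimension exceeds the stated bound, that is, as soon as
$$n+k>2^d\big((k-2)+2\big)+2^{d-1}(d-2)+2=2^dk+2^{d-1}(d-2)+2.$$
This inequality holds precisely under the hypothesis $n\ggg k,d$, with an exponential dependence on $d$, matching the phrasing of the statement; in the borderline case $k=1$ the right-hand side reduces to $2^{d-1}d+2$, consistent with the $X=S^{-1}$ clause of Theorem \ref{main1}. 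Thus for $n$ sufficiently large we obtain $h(\alpha_f)=0$ in $H_{n+k}QS^k$. I would then note that this image is unchanged under the stabilisation $d\to+\infty$: the stabilisation corresponds to the suspensions $\pi_{n+k}\Omega^d\Sigma^dS^k\to\pi_{n+k}\Omega^{d+1}\Sigma^{d+1}S^k\to\cdots$, all of which map to the single class in $\pi_{n+k}QS^k$, whose Hurewicz image in $H_{n+k}QS^k$ is one and the same.

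Finally I would invoke the work of Asadi and Eccles: by \cite[Lemma 2.2, Theorem 2.4]{AsadiEccles-determining} the Stiefel-Whitney numbers of all self-intersection manifolds of $f$ are determined by $h(\alpha_f)$ through the Kudo-Araki/Dyer-Lashof structure of $H_*QS^k$. Since $h(\alpha_f)=0$, every such Stiefel-Whitney number vanishes, and Thom's theorem — that a closed manifold all of whose Stiefel-Whitney numbers vanish is a boundary — shows that each self-intersection manifold is null-bordant. Because stabilising in $d$ does not alter the bordism classes of the self-intersection manifolds, the same conclusion holds for the image of the class of $f$ in $\mathrm{Imm}_{\R^k}(\R^{n+k})$, as required.

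The main obstacle, as I see it, is not the homotopy-theoretic bound — that is a direct application of Theorem \ref{main1} once the indices are matched — but the bookkeeping that turns the single vanishing statement $h(\alpha_f)=0$ into the simultaneous vanishing of all Stiefel-Whitney numbers of every multiple-point manifold. One must verify that the Asadi-Eccles formulae genuinely express these characteristic numbers as (homogeneous, indeed linear) functions of $h(\alpha_f)$, so that vanishing of the Hurewicz image propagates to all self-intersection data at once; and one must confirm that the identification $S^k=\Sigma^2S^{k-2}$, together with the appeal to the $S^{-1}$ clause of Theorem \ref{main1}, is legitimate in the extreme case $k=1$.
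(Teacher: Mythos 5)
Your proposal is correct and follows essentially the same route as the paper: both pass through the Koschorke--Sanderson correspondence to land in $\pi_{n+k}\Omega^d\Sigma^dS^k=\pi_{n+k}\Omega^d\Sigma^{d+2}S^{k-2}$, apply Theorem \ref{main1} with $l=d$ and $X=S^{k-2}$ (formally $S^{-1}$ when $k=1$) to kill the unstable Hurewicz image for $n+k$ beyond the exponential bound, and then conclude via the Asadi--Eccles characteristic-number formulae together with Thom's theorem. The only cosmetic difference is that the paper organises the argument around an explicit commutative diagram factoring through $\mathrm{Imm}_{\gamma^k}^d(\R^{n+k})$ and $QMO(k)$, whereas you work directly in $QS^k$ and push forward; the content is the same.
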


We leave a more complete study of the effects of Theorem \ref{main1} to the theory of immersions to a future work.

%We will refer to as the unstable bordism set (group if $m+d>0$). The bordism relation among triples such as in the above proposition, is defined in a natural way. There is an obvious procedure of stablisation the above unstable bordism sets allowing $d\to +\infty$ which allows to define $\mathrm{Imm}_\xi(\R^m\times N)=\colim_{d\to +\infty}\mathrm{Imm}_\xi^d(\R^m\times N)$.

%We use recent complete computation of spherical classes in $H_*\Omega^dS^{d+k}$, $d\leqslant 3$ and $k>0$, to show that normal Steifel-Whitney numbers of almost any bordism class in the image of the stablisation map $\mathrm{Imm}_{\R^k}^d(\R^{n+k})\to \mathrm{Imm}_{\R^k}(\R^{n+k})$ are zero, unless, possibly, if the class corresponds to a Hopf invariant one element.

\section{Preliminaries}

\subsection{Homology of $Q_0S^0$}\label{homologysection}
It is known that homology of spaces $QS^n$ and $\Omega^iS^{i+n}$, with $n\geqslant 0$, can be described using Kudo-Araki operation; the Browder operations and Cohen brackets vanish on $\Omega^iS^{i+n}$ when $n<+\infty$ (see for example \cite[Theorem 3]{KudoAraki},\cite[Theorem 7.1]{KudoAraki-Hn},
\cite[Page 86, Corollary 2]{DyerLashof}, \cite{CLM}). We wish to recall the descriptions in both lower indexed, and upper indexed operations; both of these descriptions are useful in proving some of our statements.

\textbf{Homology in terms of $Q_i$ operations.} For an $i$-fold loop space $X$, the operation $Q_j$ is defined for $0\leqslant j<i$ as an additive homomorphism
$$Q_j:H_*X\to H_{2*+j}X$$
so that $Q_0$ is the same as squaring with respect to the Pontrjagin product on $H_*X$ coming from the loop sum on $X$. The homology rings $H_*\Omega^i\Sigma^i S^n$ and $H_*QS^n$ when $n>0$, as algebras, can be described as
$$\begin{array}{lll}
H_*\Omega^i\Sigma^iS^n &\simeq &\Z/2[Q_{j_1}\cdots Q_{j_r}x_n:0<j_1\leqslant j\leqslant\cdots\leqslant j_r<i],\\
H_*QS^n                &\simeq &\Z/2[Q_{j_1}\cdots Q_{j_r}x_n:0<j_1\leqslant j\leqslant\cdots\leqslant j_r],
\end{array}$$
where, for $n>0$, $x_n\in\widetilde{H}_nS^n$ is a generator. Note that $Q_0Q_Jx_n=(Q_Jx_n)^2$ in the polynomial algebra, so this case is not included in the above description. In this description, we allow the empty sequence $\phi$ as a nondecreasing sequence of nonnegative integers with $Q_\phi$ acting as the identity; this in fact realises the monomorphism $H_*S^n\to H_*\Omega^i\Sigma^iS^n$ given by the inclusion of the bottom cell $S^n\to\Omega^i\Sigma^iS^n$ being adjoint to the identity $S^{i+n}\to S^{i+n}$.\\
For the case $n=0$, following Wellington \cite{Wellington} (see also the discussion \cite[Section 2]{Hunter} as well as \cite[Theorem 2.4]{Hunter}) we may describe $H_*\Omega^{i+1}_0S^{i+1}$ as follows. Let $M_iS^0$ be the free $\Z/2$-module generated by symbols $Q_J\iota_0$ with $J=(j_1,\ldots,j_r)$ a nonempty and nondecreasing sequence of nonnegative integers with $j_r<i+1$ so that $J\neq (0,0,\ldots,0)$. Write $[n]$ for the image of $n\in\pi_0\Omega^{i+1}S^{i+1}$ in $H_0\Omega^{i+1}S^{i+1}$ under the Hurewicz map. There is an embedding $M_iS^0\to H_*\Omega^{i+1}S^{i+1}$ which sends $Q_J\iota_0$ to $Q_J[1]*[-2^{l(J)}]$ where $l(J)=r$. We then have the following.
\begin{lmm}(Wellington)
For $i>0$, $H_*\Omega^{i+1}_0S^{i+1}$ is isomorphic to the free commutative algebra generated by $M_iS^0$ modulo the ideal generated by $\{Q_0x-x^2\}$.
\end{lmm}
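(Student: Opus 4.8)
The plan is to read the zero component off from the full Pontryagin ring $H_*\Omega^{i+1}S^{i+1}$ by isolating the path component indexed by $0\in\pi_0\Omega^{i+1}S^{i+1}\simeq\Z$. First I would recall, via the little cubes approximation, that the configuration-space model $C_{i+1}S^0$ has $H_*C_{i+1}S^0$ equal to the polynomial algebra on the admissible classes $Q_J[1]$ with $0<j_1\leqslant\cdots\leqslant j_r<i+1$ — the $S^0$-analogue, before group completion, of the descriptions recalled in Subsection~\ref{homologysection} — with $Q_0$ acting as the Pontrjagin square and, as recalled there, no surviving Browder or Cohen brackets. The group-completion theorem then identifies $H_*\Omega^{i+1}S^{i+1}$ with the localisation of $H_*C_{i+1}S^0$ obtained by inverting the group-like generator $[1]\in H_0$, so that $[-1]=[1]^{-1}$ becomes available.

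Next I would grade everything by path component, a grading that is multiplicative for the loop sum (sending the product of $[m]$ and $[m']$ to $[m+m']$) and is doubled by each Kudo-Araki operation (if $x$ lies over $[m]$ then $Q_jx$ lies over $[2m]$). Hence $Q_J[1]$ lies over the component $[2^{l(J)}]$, and loop-summing with the now-invertible class $[-2^{l(J)}]$ carries it into the zero component. This is precisely the recipe defining the embedding $M_iS^0\to H_*\Omega^{i+1}_0S^{i+1}$, $Q_J\iota_0\mapsto Q_J[1]*[-2^{l(J)}]$, and it shows that the map is well defined.

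The identification of the zero component as an algebra then proceeds as follows. Because $[1]$ is group-like, loop sum with $[\pm1]$ furnishes mutually inverse isomorphisms between consecutive components, so that $H_*\Omega^{i+1}S^{i+1}\simeq H_*\Omega^{i+1}_0S^{i+1}\otimes\Z/2[t,t^{-1}]$ as algebras, with $t=[1]$ generating the group ring of $\pi_0\simeq\Z$. The zero component is the part of $\pi_0$-degree $0$, and a generating set is obtained by normalising each polynomial generator $Q_J[1]$ (with $j_1>0$) to $\pi_0$-degree $0$ through the factor $[-2^{l(J)}]$. The relation $Q_0x=x^2$ records that the lower-indexed $Q_0$ is the squaring operation, accounting for sequences with a leading $0$; the all-zero sequences are excluded because $Q_0\cdots Q_0[1]=[2^r]$ normalises to the unit $[0]$ and contributes nothing new. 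Matching generators and relations then identifies $H_*\Omega^{i+1}_0S^{i+1}$ with the free commutative algebra on $M_iS^0$ modulo the ideal $(Q_0x-x^2)$.

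The step I expect to be the main obstacle is the bookkeeping forced by group completion: one must verify that inverting $[1]$ introduces no relations among the normalised classes beyond the squaring relation, equivalently that the evident surjection from the free commutative algebra modulo $(Q_0x-x^2)$ onto the zero component is also injective. I would settle this by a Poincar\'e-series comparison, counting in each internal degree the admissible monomials of $\pi_0$-degree $0$ before and after localisation; this is the technical heart of the argument, and is exactly where I would defer to the detailed analyses of Wellington \cite{Wellington} and Hunter \cite{Hunter} rather than reproduce the full calculation.
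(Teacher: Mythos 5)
The paper offers no proof of this lemma: it is stated as a quotation from Wellington \cite{Wellington}, with \cite{Hunter} cited for further discussion, so there is no in-paper argument to compare yours against. Your sketch is the standard derivation --- the approximation theorem for $C_{i+1}S^0$, group completion by inverting the class $[1]$, and normalisation of path components by loop sum with $[-2^{l(J)}]$ --- and it is correct in outline; it is in substance the argument of the sources the paper cites. Two remarks. First, the polynomial generating set of $H_*C_{i+1}S^0$ must include $[1]$ itself (the empty sequence $J$), which your phrasing with $0<j_1$ omits even though your subsequent use of $[1]$ as the group-like generator presupposes it. Second, the injectivity step that you flag as the main obstacle and propose to settle by a Poincar\'e-series count admits a shorter, purely algebraic resolution: after localisation one has the Laurent extension $\Z/2\bigl[[1]^{\pm1},\,Q_J[1]\ (j_1>0)\bigr]$ of a free commutative algebra, graded by path component with $[1]$ in degree $1$ and $Q_J[1]$ in degree $2^{l(J)}$, and the degree-zero subalgebra of such a graded Laurent extension is visibly the free commutative algebra on the normalised generators $Q_J[1]*[-2^{l(J)}]$, with the relation $Q_0x=x^2$ inherited from the squaring identity; no counting is needed. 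With that substitution your argument closes.
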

This allows to think of $H_*\Omega^{i+1}_0S^{i+1}$ as
$$\Z/2[Q_{j_1}\cdots Q_{j_r}[1]*[-2^{l(J)}]:0< j_1\leqslant j\leqslant\cdots\leqslant j_r<i+1].$$
The empty sequence is excluded as it would give to many $0$-dimensional classes whereas $\Omega_0^{i+1}S^{i+1}$, being the base point component of $\Omega^{i+1}S^{i+1}$, is path connected. We also not allow the $Q_0$ as it is acting as the squaring operation which is already implicit in the polynomial algebra structure. For $Q_0S^0$ being the base point component of $QS^0$, we have
\cite[Page 86, Corollary 2]{DyerLashof} (see also \cite[Part I, Lemma 4.10]{CLM})
$$H_*Q_0S^0\simeq\Z/2[Q_J[1]*[-2^{l(J)}]:0< j_1\leqslant j\leqslant\cdots\leqslant j_r].$$
Note that for $J$ being nondecreasing in $Q_Jx$ is the same as $I$ being admissible when we translate $Q_Jx=Q^Ix$ in terms of upper indexed operations. Note that we have an obvious monomorphism of algebras $H_*\Omega^{i+1}_0S^{i+1}\to H_*Q_0S^0$ provided by the stabilisation map $\Omega^{i+1}_0S^{i+1}\to Q_0S^0$ \cite[Proposition 3.1]{Zare-Els-1}.\\

\textbf{Homology in terms of $Q^i$ operations.} For some purposes, it is easier to use description of $QX$ in terms of the operations $Q^i$. For an infinite loop space $Y$ these are additive homomorphisms $Q^i:H_*Y\to H_{*+i}Y$ operations which relate to lower indexed operations by $Q_iz=Q^{i+d}z$ for any $d$-dimensional homology class $z$. The description of homology in lower indexed operations, translated to the following
$$\begin{array}{lll}
H_*QS^n    &\simeq&\Z/2[Q^Ix_n:I\textrm{ admissible },\ex(I)>n],\\
H_*Q_0S^0  &\simeq&\Z/2[Q^I[1]*[-2^{l(I)}]:I\textrm{ admissible }],
\end{array}$$
where $I=(i_1,\ldots,i_s)$ is admissible if $i_j\leq 2i_{j+1}$, and $\ex(I)=i_1-(i+\cdots+i_s)$; in particular, note that for $Q^Ix_n=Q_Jx_n$, $\ex(I)=j_1$ and $I$ is admissible if $J$ is nondecreasing. For $S^n$ with $n>0$, we allow the empty sequence to be admissible with $\ex(\phi)=+\infty$ and $Q^\phi$ acting as the identity. In particular, in this description, $Q^iz=0$ if $i<\dim z$ and $Q^dz=z^2$ if $d=\dim z$.\\
The evaluation map $\Sigma QS^n\to QS^{n+1}$, adjoint to the identity $QS^n\to \Omega QS^{n+1}=QS^n$, induces the homology suspension $H_*QS^n\to H_{*+1}QS^{n+1}$. According to \cite[Page 47]{CLM} the homology suspension is characterised by the following properties: (1) $\sigma_*$ acts trivially on decomposable terms; (2) on the generators it is given by
$$\sigma_* Q^Ix_n=Q^Ix_{n+1}\textrm{ if }n>0,\ \sigma_*(Q^I[1]*[-2^{l(I)}])=Q^Ix_1\textrm{ if }n=0.$$
The following is useful in the process of eliminating some classes from being spherical.

\begin{lmm}\label{kernelofsuspension-0}
(i) For $0\leqslant i\leqslant +\infty$, the kernel of the homology suspension $\sigma_*:H_*\Omega^i_0S^{i}\to H_{*+1}\Omega^{i-1}S^{(i-1)+1}$ consists only of decomposable classes.
\end{lmm}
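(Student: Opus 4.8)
The plan is to prove that $\sigma_*$ becomes injective after passing to indecomposables, from which the statement follows formally. Since $\sigma_*$ annihilates decomposable terms (property (1) of the homology suspension recalled above), it factors as
$H_*\Omega^i_0S^i\to\ind(H_*\Omega^i_0S^i)\xrightarrow{\overline{\sigma_*}}H_{*+1}\Omega^{i-1}S^i$,
and the decomposables are automatically contained in $\ker\sigma_*$; hence it suffices to show that $\overline{\sigma_*}$ is injective, for then $\ker\sigma_*$ is precisely the set of classes with vanishing indecomposable part, i.e.\ the decomposables. I would first reduce the finite cases $i<+\infty$ to the case $i=+\infty$. The stabilisation maps $\Omega^i_0S^i\to Q_0S^0$ and $\Omega^{i-1}S^i\to QS^1$ are monomorphisms of algebras carrying polynomial generators to polynomial generators, and they commute with $\sigma_*$; by naturality a class in the finite kernel maps into the infinite kernel, and an injective algebra map of this type reflects decomposability, so it is enough to treat $Q_0S^0\to QS^1$.

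For the infinite case I would use the explicit polynomial basis $\{Q^I[1]*[-2^{l(I)}]:I\text{ admissible}\}$ of $H_*Q_0S^0$ together with the suspension formula $\sigma_*(Q^I[1]*[-2^{l(I)}])=Q^Ix_1$, where $x_1\in\widetilde{H}_1S^1$. The key point is to analyse the image of each generator by its excess. These generators satisfy $\ex(I)\geqslant 1$, so the class $Q^Ix_1$ is always nonzero: it is a polynomial generator of $H_*QS^1$ when $\ex(I)>1$, and when $\ex(I)=1$ it equals the nonzero square $(Q^{I'}x_1)^2$ with $I'=(i_2,\ldots,i_s)$. In either case $\overline{\sigma_*}$ sends the basis element $Q^I[1]*[-2^{l(I)}]$ to a nonzero monomial of the polynomial algebra $H_*QS^1$, and distinct admissible sequences $I$ produce distinct such monomials, since every admissible $Q^Ix_1$ with $\ex(I)\geqslant 1$ is uniquely a $2$-power of a polynomial generator. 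As monomials of different polynomial degree, and distinct monomials of the same degree, are linearly independent, the images $\{Q^Ix_1\}$ form a linearly independent set, so $\overline{\sigma_*}$ is injective, which completes the argument.

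The main obstacle is the boundary analysis at $\ex(I)=1$: here a single indecomposable generator of $H_*Q_0S^0$ suspends not to a generator but to a nonzero decomposable square, and I must verify both that this image is genuinely nonzero and that these squares remain linearly independent from one another and from the images of the $\ex(I)>1$ generators; this is exactly where the uniqueness of the $2$-power decomposition of admissible monomials is used. A secondary point worth flagging is that the whole argument rests on the corrected Wellington description of $H_*\Omega^{i+1}_0S^{i+1}$ with the translations $*[-2^{l(I)}]$, rather than on any (false) primitivity of $[1]$; working directly with the polynomial-algebra structure and the explicit $\sigma_*$-formula sidesteps the error in the earlier treatment. For finite $i$ one also notes that the admissible range $i_s\leqslant i-1$ matches in the upper-indexed descriptions of source and target, so that no generator unexpectedly suspends to zero; this is automatic once the reduction to $i=+\infty$ through the stabilisation monomorphisms is in place.
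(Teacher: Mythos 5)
Your argument is correct and follows essentially the same route as the paper's proof: pass to indecomposables, use the polynomial basis $\{Q^I[1]*[-2^{l(I)}]\}$ and the formula $\sigma_*(Q^I[1]*[-2^{l(I)}])=Q^Ix_1$, and conclude that a class with nonzero indecomposable part has nonzero suspension. Your extra care at the boundary case $\ex(I)=1$ (where the image is the square $(Q^{I'}x_1)^2$ rather than a generator) and the explicit reduction of finite $i$ to $i=+\infty$ via the stabilisation monomorphism supply details the paper leaves implicit, but they do not change the method.
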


\begin{proof}
We prove the Lemma for $Q_0S^0$, that is the case of $i=+\infty$ and the case of $i<+\infty$ is similar. If $\xi\in H_*Q_0S^0$ represents an element of $\ind(H_*Q_0S^0)$ then $\xi=\sum Q^I[1]*[-2^{l(I)}]+D$ where the sum nonempty and is running over certain terms $\ex(I)>0$ and consequently $\ex(I)\geqslant 1$. Then,
$$\sigma_*\xi=\sum Q^Ix_1\neq 0.$$
Hence, only decomposable classes belong to the kernel of $\sigma_*$.
\end{proof}

The action of the Steenrod algebra on $H_*Q_0S^n$, $n\geqslant 0$, is determined by iterated application of Nishida relations which read as follows
$$Sq_*^rQ^a=\sum_{t\geqslant 0}{a-r\choose r-2t}Q^{a-r+t}Sq^{t}_*$$
Here, $Sq^r_*:H_*(-;\Z/2)\to H_{*-r}(-;\Z/2)$ is the operation induced by $Sq^r$ using the duality of vector spaces over $\Z/2$. Note that the least upper bound for $t$ so that the binomial coefficient could be nontrivial mod $2$, is $[r/2]+1$ (depending on the parity of $r$ this would be maximum value or maximum value of $t$ plus $1$). In particular, we have
$$Sq^1_*Q^{2d}=Q^{2d-1},\ Sq^{1}_*Q^{2t+1}=0.$$
We also have the Cartan formula $Sq^{2t}_*\zeta^2=(Sq^t_*\zeta)^2$ \cite{Wellington}.\\

\textbf{On the geometry of $QS^0$.} We write $Q_iS^0$ for the component $QS^0$ that correspond to stable map $S^0\to S^0$ of homological degree $i$. The composition of a map of degree $i$ induces a translation map $*[i]:Q_{-i}S^0\to Q_0S^0$ which satisfy $[m]*[n]=[m+n]$. In particular, each of these translations maps $*[i]:Q_iS^0\to Q_0S^0$ is a homotopy equivalence. We keep the usual notation of writing $*[i]$ for the map induced in homology. Moreover, applying $Q^I[1]$ to $[1]\in H_0Q_1S^1$ yields a class in $H_*Q_{2^{l(I)}}S^0$. For this reason, we need translation by $[-2^{l(I)}]$ in order to get a class in $H_*Q_0S^0$. Finally, note that the homotopy equivalence $*[i]$ is induced by a map of spaces, namely composition with a loop of degree $i$, so it induces an isomorphism of coalgebras in homology which in particular preserves primitive classes.

\subsection{Homology of $QY$ when $Y$ is path connected.}
Let $\{y_\alpha\}$ be an additive basis for the reduced homology $\widetilde{H}_*Y$. Then, as a module over the Dyer-Lashof algebra, we have
$$H_*QY\simeq\Z/2[Q^Iy_\alpha:I\textrm{ admissible, }\ex(I)>\dim y_\alpha].$$
Here, the notion/definition of admissibility and excess are as the same as Section \ref{homologysection}. The action of the $Sq^t_*$ operations on $H_*QY$ is described using Nishida relations introduced in section \ref{homologysection}.\\

The action of the homology suspension $\sigma_*:H_*QY\to H_{*+1}Q\Sigma Y$ is described by $\sigma_*Q^Iy_\alpha=Q^I\Sigma y_\alpha$. The same technique of Lemma \ref{kernelofsuspension-0} proves the following (which is of course well known).

\begin{lmm}\label{kernelofsuspension}
The kernel of $\sigma_*:H_*QY\to H_{*+1}Q\Sigma Y$ consists of only decomposable terms.
\end{lmm}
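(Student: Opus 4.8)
The plan is to mimic the proof of Lemma \ref{kernelofsuspension-0}, exploiting the fact that the homology suspension is determined on generators by $\sigma_* Q^I y_\alpha = Q^I\Sigma y_\alpha$ and that it annihilates every decomposable (product) term. Concretely, I would start with an arbitrary $\xi\in H_*QY$ whose image in $\ind(H_*QY)$ is nonzero, and write $\xi=\sum_k Q^{I_k}y_{\alpha_k}+D$, where $D$ is decomposable, the sum is finite and nonempty, and the $Q^{I_k}y_{\alpha_k}$ are pairwise distinct polynomial generators, so that each index satisfies $I_k$ admissible with $\ex(I_k)>\dim y_{\alpha_k}$. Applying $\sigma_*$ kills $D$ and yields $\sigma_*\xi=\sum_k Q^{I_k}\Sigma y_{\alpha_k}$, and the entire content of the argument is to show that this sum is nonzero.

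The key numerical observation is that the strict inequality $\ex(I_k)>\dim y_{\alpha_k}$ upgrades, over the integers, to $\ex(I_k)\geqslant\dim y_{\alpha_k}+1=\dim\Sigma y_{\alpha_k}$. Since $Q^i z=0$ precisely when $i<\dim z$ and $Q^i z=z^2$ when $i=\dim z$, this inequality guarantees that each $Q^{I_k}\Sigma y_{\alpha_k}$ is nonzero in $H_*Q\Sigma Y$: when $\ex(I_k)>\dim\Sigma y_{\alpha_k}$ it is an honest polynomial generator, and when $\ex(I_k)=\dim\Sigma y_{\alpha_k}$ it equals $(Q^{J_k}\Sigma y_{\alpha_k})^2$, the square of a generator, where $J_k$ is $I_k$ with its first entry deleted. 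In this boundary case admissibility of $I_k=(i_1,J_k)$, i.e. $i_1\leqslant 2i_2$, gives $\ex(J_k)-\ex(I_k)=2i_2-i_1\geqslant 0$, so $\ex(J_k)\geqslant\ex(I_k)=\dim\Sigma y_{\alpha_k}$ and hence $Q^{J_k}\Sigma y_{\alpha_k}\neq 0$, making the square genuinely nonzero.

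It then remains to rule out cancellation among the surviving terms. I would note that the assignment $(I_k,\alpha_k)\mapsto(I_k,\Sigma y_{\alpha_k})$ is injective, so the $Q^{I_k}\Sigma y_{\alpha_k}$ are pairwise distinct; by the previous paragraph each is either a polynomial generator of $H_*Q\Sigma Y$ or the square of one, and distinct elements of either type are linearly independent monomials in the polynomial algebra $H_*Q\Sigma Y\simeq\Z/2[Q^I\Sigma y_\alpha]$. Splitting $\sigma_*\xi$ into its indecomposable part (the terms with $\ex(I_k)>\dim\Sigma y_{\alpha_k}$) and its decomposable part (the squares with $\ex(I_k)=\dim\Sigma y_{\alpha_k}$) shows that at least one of these two parts is a nonempty sum of linearly independent monomials, hence nonzero; therefore $\sigma_*\xi\neq 0$. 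This proves that the kernel of $\sigma_*$ consists only of decomposables.

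I expect the only point requiring genuine care — and the sole place where the present path-connected case departs from Lemma \ref{kernelofsuspension-0}, where all generators sit over the single class $[1]$ in dimension $0$ — to be the boundary case $\ex(I_k)=\dim\Sigma y_{\alpha_k}$, in which a generator of $H_*QY$ suspends to a square rather than to a generator. Dealing with it is exactly what forces the splitting of $\sigma_*\xi$ into its indecomposable and decomposable parts together with the admissibility estimate $\ex(I)\leqslant\ex(J)$ for $I=(i_1,J)$; everything else is the formal bookkeeping already present in the proof of Lemma \ref{kernelofsuspension-0}.
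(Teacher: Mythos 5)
Your proof is correct and follows essentially the same route as the paper, which simply states that ``the same technique of Lemma \ref{kernelofsuspension-0} proves'' this lemma: you carry out that technique explicitly, in fact more carefully, since you isolate the boundary case $\ex(I_k)=\dim\Sigma y_{\alpha_k}$ that the model proof glosses over when it asserts $\sum Q^Ix_1\neq 0$. The only tiny imprecision is that when $\ex(J_k)=\dim\Sigma y_{\alpha_k}$ as well, the boundary term is a $2^t$-th power of a polynomial generator rather than merely a square, but such terms are still pairwise distinct nonzero monomials in $H_*Q\Sigma Y$, so your linear-independence conclusion stands unchanged.
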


\subsection{Recollection on spherical classes}
A class $\xi\in\widetilde{H}_*X$ is called spherical if it is in the image of the Hurewicz homomorphism $h:\pi_*X\to H_*X$. A spherical class $\xi\in\widetilde{H}_*X$ has some basic properties: (1) it is primitive in the coalgebra $\widetilde{H}_*X$ {where the coalgebra structure is induced by the diagonal map $X\to X\times X$}, (2) it is $A$-annihilated, i.e. $Sq^i_*\xi=0$ for all $i>0$ where $Sq^i_*:\widetilde{H}_*X\to \widetilde{H}_{*-i}X$ is the operation induced by $Sq^i:\widetilde{H}^*X\to \widetilde{H}^{*+i}X$ by the vector space duality \cite[Lemma 2.5]{AsadiEccles}.\\

\begin{crl}\label{formofspherical-1}
(i) Let $Y$ be a path connected space. For $f:S^d\to Q\Sigma Y$ is any map with $h(f)\neq 0$ we have $h(f)=\sum Q^I\Sigma y_\alpha$ where $I$ is raging over certain admissible sequences $I$ with $\ex(I)\geqslant \dim y_\alpha+1$.\\
(ii) If $f:S^d\to QS^1$ is given with $h(f)\neq 0$ then $h(f)=\sigma_*h(f')=\sum Q^Ix_1$ where sum is running over certain terms with $\ex(I)\geqslant 1$.
\end{crl}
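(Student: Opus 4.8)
The plan is to extract both parts from the single structural fact recorded in Section 2.3 that a spherical class is primitive, combined with an explicit identification of the primitive submodule of the relevant polynomial Hopf algebras; $A$-annihilation is not needed for this corollary, so primitivity alone will carry the argument.

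For part (i), I would first note that $H_*Q\Sigma Y$ is a connected, commutative Hopf algebra which, as an algebra, is the polynomial algebra on the admissible monomials $Q^I\Sigma y_\alpha$ with $\ex(I)>\dim\Sigma y_\alpha$. The key point is that these polynomial generators are themselves primitive: each bottom-cell class $\Sigma y_\alpha$ is primitive in $H_*Q\Sigma Y$ (it comes from $\widetilde H_*\Sigma Y$, all of which is primitive since $\Sigma Y$ is a co-$H$-space, via the coalgebra map induced by $\Sigma Y\to Q\Sigma Y$), and the Cartan formula for the coproduct of $Q^i$ together with $Q^b(1)=0$ for $b>0$ shows that $Q^i$ carries primitives to primitives, so $Q^I\Sigma y_\alpha$ is primitive by induction on the length of $I$. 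With the generators primitive, the standard structure theorem for a polynomial Hopf algebra over $\Z/2$ (a tensor product of Hopf algebras $\Z/2[u]$ with $u$ primitive) identifies the primitive submodule as the $\Z/2$-span of the $2^j$-th powers $(Q^I\Sigma y_\alpha)^{2^j}$, $j\geqslant 0$. The remaining step is bookkeeping: using $w^2=Q^{\dim w}w$ repeatedly, each such power is again an admissible monomial $Q^{I'}\Sigma y_\alpha$, and a short excess computation shows $\ex(I')=\dim\Sigma y_\alpha=\dim y_\alpha+1$ for $j\geqslant 1$, while the generators ($j=0$) have $\ex(I)>\dim y_\alpha+1$. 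Hence every primitive, and in particular $h(f)$, is a sum $\sum Q^I\Sigma y_\alpha$ with $\ex(I)\geqslant\dim y_\alpha+1$.

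For part (ii) the same primitivity argument applies verbatim to $H_*QS^1=\Z/2[Q^Ix_1:\ex(I)>1]$, since $x_1\in\widetilde H_*S^1$ is primitive; this already yields $h(f)=\sum Q^Ix_1$ with $\ex(I)\geqslant 1$. To obtain the stated identity $h(f)=\sigma_*h(f')$, I would invoke the stable suspension isomorphism $\pi_dQS^1\cong\pi_{d-1}QS^0$: if $f'\colon S^{d-1}\to QS^0$ is the class corresponding to $f$, then $f$ factors up to homotopy as the evaluation $\Sigma QS^0\to QS^1$ applied to $\Sigma f'$, and naturality of the Hurewicz homomorphism forces $h(f)=\sigma_*h(f')$. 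The two descriptions are then consistent: since the generators of $H_*Q_0S^0$ have $\ex(I)\geqslant 1$ and $\sigma_*$ kills decomposables (Lemma \ref{kernelofsuspension-0}) while sending $Q^I[1]*[-2^{l(I)}]$ to $Q^Ix_1$, the image $\sigma_*h(f')$ is automatically a sum of $Q^Ix_1$ with $\ex(I)\geqslant 1$.

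The delicate point, and the reason part (ii) is stated separately rather than as the special case $Y=S^0$ of part (i), is that $S^0$ is not path connected and, as the paper emphasises, the class $[1]\in H_*QS^0$ is grouplike rather than primitive, so the monomials $Q^I[1]*[-2^{l(I)}]$ are \emph{not} primitive and the structure theorem cannot be applied directly inside $H_*Q_0S^0$. The route above sidesteps this entirely: primitivity is invoked only in the well-behaved algebra $H_*QS^1$, and the passage to $Q_0S^0$ is made through $\sigma_*$, whose image is by construction the span of the $Q^Ix_1$. I expect this interface between the two homology descriptions to be the main thing to get right; the excess computation and the appeal to the structure theorem are routine.
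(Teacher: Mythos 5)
Your argument is correct, but it follows a genuinely different route from the paper's. The paper proves (ii) (and declares (i) similar) entirely through the adjunction and the homology suspension: it passes to the adjoint $f'\colon S^{d-1}\to Q_0S^0$ with $h(f)=\sigma_*h(f')$, expands $h(f')=\sum Q^I[1]*[-2^{l(I)}]+D$ with $D$ decomposable, and uses Lemma \ref{kernelofsuspension-0} (the kernel of $\sigma_*$ is the decomposables, and $\sigma_*$ carries generators to the classes $Q^Ix_1$, resp.\ $Q^I\Sigma y_\alpha$) to conclude; primitivity is never invoked. You instead identify $h(f)$ as a primitive element of the polynomial Hopf algebra $H_*Q\Sigma Y$ on \emph{primitive} generators and quote the structure theorem that the primitive submodule is spanned by the $2^j$-th powers of the generators; your excess bookkeeping is right ($\ex=\dim\Sigma y_\alpha$ for the powers with $j\geqslant 1$, $\ex>\dim\Sigma y_\alpha$ for the generators), and the primitivity of the $Q^I\Sigma y_\alpha$ is exactly the observation the paper itself records at the start of its Section 3.3. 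The two characterisations happen to coincide here: for $Q\Sigma Y$ the image of $\sigma_*$ equals the primitive submodule. The paper's route gets the identity $h(f)=\sigma_*h(f')$ of part (ii) for free, since it is the starting point rather than an add-on, and needs nothing beyond the behaviour of $\sigma_*$; your route applies to any primitive class (not only Hurewicz images of desuspendable maps) and makes transparent, as you rightly stress, why (ii) is not the case $Y=S^0$ of (i): the class $[1]$ is grouplike rather than primitive, which is precisely the pitfall the paper flags concerning \cite[Lemma 3.6]{Zare-Els-1}. For part (ii) you still need the adjunction to produce $h(f)=\sigma_*h(f')$, which you supply; the only point worth adding is that for $d>1$ the adjoint lands in the basepoint component $Q_0S^0$, as in the paper's proof.
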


\begin{proof}
We prove (ii) and (i) is similar. Suppose $f:S^d\to QS^1$ is given with $f_*\neq 0$ and $d>1$. This means that the adjoint mapping $f':S^{d-1}\to Q_0S^0$ is also nontrivial in homology with $h(f)=\sigma_*h(f')$. By Lemma \ref{kernelofsuspension-0},
$$h(f')=\sum Q^I[1]*[-2^{l(I)}]+D$$
for some admissible sequences $I$ with $\ex(I)>0$ and $D$ being a sum of decomposable term. Note that the sum cannot be empty as otherwise $h(f)=\sigma_*h(f')=0$. Consequently,
$$h(f)=\sigma_*h(f')=\sum Q^Ix_1.$$
\end{proof}

We are interested in studying spherical classes in $H_*\Omega Y$ for some nice spaces $Y=QX$ with $X$ being of finite type. We work over $\Z/2$ which is a field over which we have K\"{u}nneth formula that turns $H_*\Omega Y$ into a Hopf algebra. The structure of Hopf algebras allows to make some early eliminations of spherical classes in $H_*\Omega Y$. Writing $\Omega_0Y$ for the component of $\Omega Y$ corresponding to the constant loop, $H_*\Omega_0Y$ is also a connected Hopf algebra. Note that if {$\pi_0\Omega Y\simeq G$ is a discrete group, then} on the level of spaces we have a weak equivalence $\Omega Y\simeq\Omega_0 Y\times G$. We recall some well known facts from \cite{Zare-Els-1}.

\begin{lmm}\label{2^t-1}
Suppose $Y$ is a space of finite type. If $\xi\in H_*\Omega^2_0Y$ is a decomposable spherical class, then $\xi=p^{2^t}$ for some primitive class $p\in H_*\Omega^2_0Y$ and $t>0$.
\end{lmm}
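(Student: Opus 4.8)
The plan is to exploit the Hopf-algebra structure of $A:=H_*\Omega^2_0Y$ together with the fact that a spherical class is primitive. First I would record the structural input: since $\Omega^2Y$ is a double loop space its Pontrjagin product is commutative, $\Omega^2_0Y$ is path connected, and $Y$ has finite type; hence $A$ is a connected, graded-commutative Hopf algebra of finite type over $\Z/2$, so Borel's structure theorem applies and $A$ splits as a tensor product of monogenic Hopf algebras $\Z/2[x_i]$ and $\Z/2[x_i]/(x_i^{2^{s_i}})$.

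Next I would identify the primitives. Because the primitive functor is additive on tensor products of connected Hopf algebras, $\pri(A)=\bigoplus_i\pri(A_i)$, and in each monogenic factor the primitives are spanned by the Frobenius powers $x_i^{2^j}$ of the generator (with $0\leqslant j$, truncated by $s_i$ in the bounded case); this uses that squaring is additive over $\Z/2$, so $\Delta(x^{2^j})=x^{2^j}\otimes 1+1\otimes x^{2^j}$. Under the natural surjection $\pri(A)\to\ind(A)$ exactly the $j=0$ classes (the generators) survive, while every $x_i^{2^j}$ with $j\geqslant 1$ is decomposable. Thus $\{x_i\}$ is a basis for the indecomposable primitives and the decomposable primitives are spanned by $\{x_i^{2^j}:j\geqslant 1\}$.

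Now take the given class $\xi$. Being spherical it is primitive, and by hypothesis it is decomposable, so writing $\xi$ in the above basis no generator $x_i$ can occur (its image in $\ind(A)$ vanishes); hence $\xi=\sum_{i,\,j\geqslant 1}\epsilon_{i,j}\,x_i^{2^j}$ with $\epsilon_{i,j}\in\Z/2$. By the freshman's dream over $\Z/2$ this is the square $\xi=\eta^2$ of $\eta:=\sum_{i,\,j\geqslant 1}\epsilon_{i,j}\,x_i^{2^{j-1}}$, and $\eta$ is again a sum of primitive basis elements, hence primitive. Finally I would iterate: if $\eta$ is itself decomposable the same argument writes it as the square of a primitive, and since $\dim\eta=\tfrac12\dim\xi$ strictly decreases the process terminates, yielding $\xi=p^{2^t}$ with $p$ a (necessarily indecomposable) primitive and $t>0$.

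The computation is routine once the structure theorem is in place; the only point requiring care—and the one I would expect to be the main obstacle to state cleanly—is the verification that $A$ genuinely satisfies the hypotheses of Borel's theorem, in particular the graded-commutativity coming from the double (rather than single) loop structure, which is exactly why the lemma is stated for $\Omega^2_0Y$. One may alternatively bypass the explicit Borel splitting by invoking the Milnor--Moore exact sequence relating $\pri(A)$, $\ind(A)$, and the Frobenius, which identifies the decomposable primitives with the image of squaring on primitives directly.
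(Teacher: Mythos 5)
Your main argument has a genuine gap at the point where you pass from Borel's theorem to a description of the primitives. Borel's structure theorem identifies $A=H_*\Omega^2_0Y$ with a tensor product of monogenic Hopf algebras \emph{as algebras only}; it does not split the coproduct, so neither the identity $\pri(A)=\bigoplus_i\pri(A_i)$ nor the claim that the Borel generators $x_i$ are primitive is available. Concretely, take $Y=K(\Z,4)$, so that $A=H_*\Omega^2_0Y=H_*(K(\Z,2);\Z/2)=\Gamma[x]$ is the divided power algebra on a two-dimensional class: its Borel decomposition is the exterior algebra on the classes $\gamma_{2^i}(x)$, yet
$$\Delta\gamma_2(x)=\gamma_2(x)\otimes 1+x\otimes x+1\otimes\gamma_2(x),$$
so $\gamma_2(x)$ is not primitive, and $\pri(\Gamma[x])$ is one-dimensional, spanned by $x$ alone --- nothing like the span of all Frobenius powers of all Borel generators. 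Consequently the basis $\{x_i^{2^j}\}$ in which you expand $\xi$, and the assertion that the square root $\eta$ is ``again a sum of primitive basis elements, hence primitive,'' are both unjustified as written.

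The one-sentence alternative you mention at the end --- the Milnor--Moore exact sequence relating $\pri$, $\ind$ and the Frobenius for a connected commutative Hopf algebra over a perfect field, which identifies the decomposable primitives with squares of primitives --- is precisely the paper's proof, and it is the correct route: it uses only the bicommutativity of $H_*\Omega^2_0Y$ supplied by the double loop structure and requires no explicit basis. Your iteration step (halving dimensions until the primitive square root becomes indecomposable) is fine once that input is in place, though the statement only asks for some primitive $p$ and some $t>0$, so a single application of Milnor--Moore already suffices.
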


\begin{proof}
Since $\Omega^2_0Y$ is a double loop space, hence $H_*\Omega^2_0Y$ is a bicommutative Hopf algebra. By Milnor-Moore exact sequence \cite[Proposition 4.23]{MM} a decomposable primitive class must be square of a primitive class. A spherical class is in particular primitive. This completes proof.
\end{proof}

\section{Proof of Theorem \ref{2^t}}

\subsection{Primitive classes in $H_*Q_0S^0$}
The description that we give here, is essentially the one given by Madsen \cite[Proposition 5.3]{Madsen}. Note that we shall be working with primitive classes of $H_*Q_0S^0$ that live in odd dimensions. The main tool in computing primitive classes in a suitable Hopf algebra $H$ over any field $k$ of nonzero characteristic is the exact sequence of Milnor and Moore \cite[Proposition 4.23]{MM}. For $k=\F$, this gives
$$0\lra\pri(\F(sH))\lra\pri(H)\lra\ind(H)\lra\ind(\F(rH))\lra 0$$
where $\F(-)$ is the free $\F$-algebra functor, $\pri(-)$ and $\ind(-)$ are primitive and indecomposable functors, $s:H\to H$ is the squaring (a.k.a Frobenius map) and $r:H\to H$ is the square root map which is dual to the Frobenius map of the dual Hopf algebra $H^*$. Since, we are interested in the odd dimensional primitive classes, so the above sequence yield
$$0\lra\pri(H)\lra\ind(H)\lra\ind(\F(rH))\lra 0.$$
The dual of the Hopf algebra $H_*Q_0S^0$ is just $H^*Q_0S^0$ where on an $n$-dimensional class the squaring is just $Sq^n$. Consequently, in homology the squaring root map is given by the dual operations $Sq^r_*$. Hence, using Nishida relations, we see that the square root map $r:H_*Q_0S^0\to H_*Q_0S^0$ is described by $rQ^{2i+1}=0$ and $rQ^{2i}=Q^ir$ which extends over $H_*Q_0S^0$ by requiring $r$ to be linear and
$$rQ^iQ^I[1]*[-2^{l(I)+1}]=\left\{\begin{array}{ll}
                                  Q^krQ^I[1]*[-2^{l(I)+1}] & \textrm{if }i=2k,\\
                                  0                        & \textrm{if }i=2k+1.
                                  \end{array}\right.$$
The following is a triviality, but we record it for the reference.
\begin{lmm}\label{kernelofr}
If $r(\sum_{l(I)=s} Q^I[1]*[-2^{l(I)}])=0$ for some fixed $s>0$, then any sequence $I$ in the sum must have on odd entry.
\end{lmm}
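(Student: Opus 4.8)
The plan is to read the conclusion straight off the description of the square root map $r$ recalled just above. Recall that $r$ is additive and determined by $rQ^{2k+1}=0$ and $rQ^{2k}=Q^kr$. Applying this iteratively from the outside to a single generator $Q^I[1]*[-2^s]$ with $I=(i_1,\ldots,i_s)$ of length $l(I)=s$: each even outermost entry $i_j=2k_j$ is peeled off as $rQ^{i_j}=Q^{k_j}r$, while the first odd entry encountered contributes a factor $rQ^{i_j}=0$. Hence $r(Q^I[1]*[-2^s])=0$ whenever some entry of $I$ is odd, whereas if every entry is even one obtains
$$r\left(Q^I[1]*[-2^s]\right)=Q^{I/2}[1]*[-2^s],\qquad I/2:=(k_1,\ldots,k_s).$$

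First I would check that halving preserves the shape of a generator: if $I$ is admissible, $i_j\leqslant 2i_{j+1}$, then $k_j=i_j/2\leqslant i_{j+1}=2k_{j+1}$, so $I/2$ is again an admissible sequence of length $s$, and $Q^{I/2}[1]*[-2^s]$ is again one of the standard polynomial generators of $H_*Q_0S^0$ listed in Section \ref{homologysection}; in particular it is a member of the chosen additive basis. Since the length $s$ is the same for every term in the sum, the translation factor $*[-2^s]$ is uniform and plays no role, and the assignment $I\mapsto I/2$ is visibly injective on sequences all of whose entries are even.

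Next I would split the given combination as $A+B$, where $A$ gathers those $I$ all of whose entries are even and $B$ gathers those $I$ having at least one odd entry. By the computation above $r(B)=0$, while $r(A)=\sum_{I\in A}Q^{I/2}[1]*[-2^s]$ is a sum of pairwise distinct basis elements. Thus the hypothesis forces $r(A)=0$, and linear independence of these generators gives $A=0$; that is, no sequence occurring in the original sum can have all entries even, which is precisely the assertion that each such $I$ carries an odd entry. The only point requiring any care—and the reason the statement is recorded rather than left to the reader—is the bookkeeping guaranteeing that $r$ restricts to an operator $H_*Q_0S^0\to H_*Q_0S^0$ fixing the translation $*[-2^s]$ across all terms of equal length; once this is in place the conclusion is immediate from linear independence.
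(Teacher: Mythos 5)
Your proposal is correct and follows essentially the same route as the paper's (very terse) proof: terms whose sequence has an odd entry are killed by $r$, while the all-even terms map injectively to distinct polynomial generators $Q^{I/2}[1]*[-2^{s}]$, so no cancellation can occur and the all-even part must vanish. The only addition you make is the explicit check that $I/2$ is again admissible (and one could likewise note $\ex(I/2)=\ex(I)/2>0$), which the paper leaves implicit.
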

\begin{proof}
By the above formulae, if $I$ has an odd entry $r(Q^I[1]*[-2^{l(I)}]=0$. Moreover, If $I$ and $J$ are two sequences with only even entries and of the same length, then $r(Q^I[1]*[-2^{l(I)}])\neq 0$ as well as $r(Q^I[1]*[-2^{l(I)}])+r(Q^J[1]*[-2^{l(J)}])\neq 0$. The lemma now follows.
\end{proof}

Now, we give a basis for the module of primitive in $H_*Q_0S^0$ over the Dyer-Lashof algebra. Note that by mixed Cartan formula, applying an operation $Q^i$ to a primitive class $\xi$, we have another primitive class $Q^i\xi$.
\begin{thm}\label{primitivebasis}
Consider all admissible sequences $I=(i_1,\ldots,i_s)$ with $l(I)=s>0$ such that $i_1$ is odd and $i_j$ is even for all $j>1$. For any such as sequence, let $p_I$ be the unique primitive element in $H_*Q_0S^0$ which module decomposable elements is given by
$$p_I=Q^I[1]*[-2^s],$$
i.e. $p_I=Q^I[1]*[-2^{l(I)}]+D_I$ where $D_I$ denotes a unique expression of decomposable elements. Then, up to translation by $*[-k]$ where $k$ is a difference of powers of $2$ and modulo decomposable terms, any primitive class in $H_*Q_0S^0$ is either of the form
$$\sum_{(I,J)\textrm{ admissible }}Q^Ip_J$$
or a power of $2$ of such classes.
\end{thm}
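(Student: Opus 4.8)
The plan is to read off the primitives from the Milnor--Moore exact sequence
$$0\lra\pri(H)\lra\ind(H)\lra\ind(\F(rH))\lra 0$$
in odd degrees (with $H=H_*Q_0S^0$), and then to organise the resulting indecomposable data according to the combinatorics of admissible sequences. First I would establish existence and uniqueness of $p_I$. Since in odd degrees $\pri(H)$ injects into $\ind(H)$, a primitive is determined modulo decomposables by its image in $\ind(H)$, and $\ind(H)$ has basis the generators $Q^I[1]*[-2^{l(I)}]$ with $I$ admissible. Fixing an admissible $I$ with $i_1$ odd and $i_j$ even for $j>1$, I would show this generator is hit by $\pri(H)$: by the formula $rQ^{2i+1}=0$, $rQ^{2i}=Q^ir$ together with Lemma \ref{kernelofr}, the class $Q^I[1]*[-2^{l(I)}]$ lies in the kernel of $\ind(H)\to\ind(\F(rH))$, hence lifts to a primitive. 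Uniqueness of the lift modulo decomposables is immediate from injectivity of $\pri(H)\to\ind(H)$; to pin down the decomposable tail $D_I$ I would order admissible sequences (say by length, then lexicographically) and argue by downward induction that any two primitives sharing the leading term $Q^I[1]*[-2^{l(I)}]$ differ by a primitive of strictly smaller leading indecomposable, which by induction is already expressed, forcing $D_I$ to be unique.

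Next I would prove the generation statement in odd degrees. Given a primitive $\xi$, write $\xi=\sum_K a_K\,Q^K[1]*[-2^{l(K)}]+(\textrm{decomposable})$ over admissible $K$. For each admissible $K$ occurring, factor $K=(I,J)$ by letting $J$ consist of the last odd entry of $K$ together with all even entries following it, and $I$ the preceding segment; then $J$ has the pattern of the theorem, and both $I$ and $J$ are admissible with the junction inequality inherited from admissibility of $K$. Because $Q^i$ carries primitives to primitives by the mixed Cartan formula, $Q^Ip_J$ is primitive with leading indecomposable exactly $Q^K[1]*[-2^{l(K)}]$; subtracting $a_K\,Q^Ip_J$ and inducting downward on the leading term (as in the previous paragraph) I would reduce $\xi$, modulo decomposables, to the asserted form $\sum Q^Ip_J$. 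Here one must track base-point components: since $Q^i$ maps $H_*Q_0S^0$ to itself and $p_J\in H_*Q_0S^0$, each $Q^Ip_J$ already lives in $H_*Q_0S^0$, while the comparison of $p_J$ with the raw class $Q^J[1]$, which sits in $H_*Q_{2^{l(J)}}S^0$, accounts for the translations by differences of powers of $2$ in the statement; I would record that $*[k]$ is a coalgebra isomorphism and so preserves primitivity.

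Finally I would dispose of even degrees. An even-dimensional primitive which is decomposable is, by the Milnor--Moore sequence (its $\pri(\F(sH))$ term) together with the argument of Lemma \ref{2^t-1}, a $2^t$-th power of a primitive; iterating the square-root map lands in odd degree, where the previous paragraphs apply, so such a class is a power of $2$ of one of the form $\sum Q^Ip_J$. The main obstacle I anticipate is the combinatorial bookkeeping in the inductive step: verifying that the factorisation $K=(I,J)$ is forced and compatible with admissibility, that $Q^Ip_J$ contributes no spurious leading terms once the decomposable corrections $D_J$ and the Cartan cross-terms are expanded, and that the resulting expression is minimal enough to deserve being called a basis. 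Making the chosen ordering on admissible sequences interact cleanly with both the $Q^i$-action and the translations is the delicate point; everything else is a routine application of the $r$-formula and the Milnor--Moore sequence.
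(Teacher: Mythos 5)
Your proposal is correct and follows essentially the same route as the paper: the Milnor--Moore sequence reduces odd-degree primitives to the kernel of the square-root map, Lemma \ref{kernelofr} forces each admissible sequence occurring to contain an odd entry, and your factorisation $K=(I,J)$ at the last odd entry is exactly the paper's splitting $I=(I',I'')$ at $b_I=\max\{j: i_j \textrm{ odd}\}$, with the same use of the Cartan formula to absorb $Q^{I}D_{J}$ into decomposables and the same translation bookkeeping. The extra detail you supply (the downward induction on leading indecomposables for uniqueness of $D_I$, and the explicit lifting argument for existence of $p_I$) fills in steps the paper asserts without proof, but does not change the argument.
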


Note that this is slightly different from saying that $\pri(H_*Q_0S^0)$ is the module over the Dyer-Lashof algebra that is generated by $p_J$ elements. Here, $Q^Ip_J$ are ought to be admissible, and writing a primitive class in the claimed form follows naturally from the construction that is proposed in the following proof.

\begin{proof}
Note that by Milnor-Moore exact sequence above, a primitive class which is belongs to the submodule of decomposable elements, is square of a primitive class. Hence, we only need to describe those primitive classes that map nontrivially into the quotient module of indecomposable elements $\ind(H_*Q_0S^0)$ which are in $1$-$1$ correspondence with the kernel of the square root map. Suppose $\xi=\sum Q^I[1]*[-2^{l(I)}]+D$ be a primitive class where $D$ is the sum of decomposable terms and $\ex(I)>0$ for any terms in the first sum. By the above explanations, $\xi$ is primitive if and only if $\sum Q^I[1]*[-2^{l(I)}]\in\ker(r)$. Applying Lemma \ref{kernelofr} this means that $\xi$ is primitive if and only if any $I$ in the above expression has an odd entry. For any odd, let $b_I=\max\{j:i_j\textrm{ is odd}\}$ and split $I$ into two sequences as $I=(I',I'')$ with $I'=(i_1,\ldots,i_{b_I-1})$ and $I''=(i_{b_I},\ldots,i_s)$. Note that $I'=(\ )$, the empty sequence for $b_I=1$, and $l(I)=l(I')+l(I'')$. Now, we have
\begin{eqnarray*}
\xi&=&\sum Q^{I'}Q^{I''}[1]*[-2^{l(I)}]+D\\ \\
   &=&\sum Q^{I'}(Q^{I''}[1]*[-2^{l(I'')}])*[-2^{l(I)}+2^{l(I'')}]+D\\ \\
   &=&\sum Q^{I'}(p_{I''}+D_{I''})*[-2^{l(I)}+2^{l(I'')}]+D.
\end{eqnarray*}
Note that by Cartan formula $Q^{I'}D_{I''}$ is a sum of decomposable terms. Hence, modulo decomposable terms we have
$$\xi=\sum Q^{I'}p_{I''}*[-2^{l(I)}+2^{l(I'')}]$$
which is the desired equality. Also, note that $I=(I',I'')$ which is admissible by construction. This completes the proof.
\end{proof}

\subsection{Higher powers of $2$ in $H_*QS^1$}
Our description of primitive classes in $H_*Q_0S^0$ provided by Theorem \ref{primitivebasis} allows a straightforward proof of Theorem \ref{2^t}(i). We have the following.

\begin{lmm}\label{2^t-1}
If $\xi\in H_m\Omega^iS^{i+1}$, $0<i\leqslant +\infty$ is a decomposable spherical class. Then $\xi=\zeta^2$ for some odd dimension primitive class $\zeta$.
\end{lmm}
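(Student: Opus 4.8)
The plan is to reduce everything to the universal case $\Omega^iS^{i+1}=QS^1$ and then prove the equivalent numerical statement $\dim\xi\equiv 2\pmod 4$. For finite $i$ the $i$-fold loop of the stabilisation gives an injective map of algebras $s_*\colon H_*\Omega^iS^{i+1}\hookrightarrow H_*QS^1$ which preserves dimension and, by naturality of the Hurewicz map, carries spherical classes to spherical classes; hence $s_*\xi$ is a nonzero decomposable spherical class of the same dimension, and it suffices to establish the parity statement in $H_*QS^1$. Since $H_*\Omega^iS^{i+1}$ is a bicommutative connected Hopf algebra for every $0<i\leqslant+\infty$ (a double loop space for $i\geqslant 2$, and the polynomial, cocommutative algebra $\Z/2[x_1]$ for $i=1$), and $\xi$, being spherical, is primitive, the Milnor--Moore sequence forces the decomposable primitive $\xi$ to be a square $\xi=\zeta^2$ with $\zeta$ primitive. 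Writing $\bar\zeta=s_*\zeta$ we get $\bar\xi:=s_*\xi=\bar\zeta^2$ with $\dim\zeta=\dim\bar\zeta$, so the entire content is that $\dim\bar\zeta$ is odd.

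Working in $H_*QS^1$, I first note that from the Cartan formula $Sq^{2t}_*\bar\zeta^2=(Sq^t_*\bar\zeta)^2$, together with the facts that a spherical class is $A$-annihilated and that $H_*QS^1$ is a polynomial algebra (hence has no nonzero nilpotents), the class $\bar\zeta$ is itself $A$-annihilated. Next I lift along the homology suspension: by Corollary \ref{formofspherical-1}(ii) and Lemma \ref{kernelofsuspension-0} there is a spherical class $\xi'\in H_*Q_0S^0$ with $\sigma_*\xi'=\bar\xi$, and since $\sigma_*\xi'\neq 0$ the primitive class $\xi'$ has nonzero image in $\ind(H_*Q_0S^0)$. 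Theorem \ref{primitivebasis} then lets me write, modulo decomposables, $\xi'=\sum_\lambda Q^{A_\lambda}p_{B_\lambda}$ where each $B_\lambda=(b_1,\dots)$ has $b_1$ odd and all later entries even. Applying $\sigma_*$ and using $\sigma_*p_{B}=Q^{B}x_1$ gives $\bar\xi=\sum_\lambda Q^{(A_\lambda,B_\lambda)}x_1$; comparing indecomposable parts, the decomposability of $\bar\xi$ forces every sequence $(A_\lambda,B_\lambda)$ to have $\ex(A_\lambda,B_\lambda)=1$, so each summand is a square and $\dim\bar\zeta$ equals the common leading entry of the $(A_\lambda,B_\lambda)$.

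It remains to show this common leading entry is odd. When the prefix $A_\lambda$ is empty this is immediate, since the leading entry is then $b_1$, which is odd by the construction of $p_{B_\lambda}$. The main obstacle is the case $A_\lambda\neq\emptyset$, that is, ruling out an even leading entry; equivalently, one must exclude the possibility that $\bar\zeta$ is itself a square, which is precisely the statement that $\bar\xi$ is a higher power of $2$. Here I would use the $A$-annihilation of $\bar\zeta$ through the Nishida relation $Sq^1_*Q^{a}=(a-1)Q^{a-1}$: on a generator of excess greater than $1$ the operation $Sq^1_*$ lowers an even leading entry to a nonzero admissible class and is injective on such terms, so $Sq^1_*\bar\zeta=0$ forces the genuine (excess $>1$) generators in $\bar\zeta$ to have odd leading entry. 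The delicate remaining point --- and the real heart of \emph{eliminating higher powers of $2$} --- is to forbid the degenerate configuration in which every generator of $\bar\zeta$ again has excess $1$ (so that $\bar\zeta=w^2$ and $\dim\bar\zeta$ is even); I expect to dispose of this by a Milnor--Moore descent, observing that such a $w$ would again be primitive and $A$-annihilated and tracking the leading admissible term together with its odd-entry data from Theorem \ref{primitivebasis} under iterated $Sq^1_*$ until it collides with the excess-$1$/odd-leading-entry constraints. This descent is where I anticipate the bulk of the technical work, and it is exactly the step that the (flawed) argument of \cite[Lemma 3.6]{Zare-Els-1} mishandled through the incorrect primitivity of $[1]$.
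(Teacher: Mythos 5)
Your setup is sound and matches the paper's: reduce to $H_*QS^1$ via the injective stabilisation, use Milnor--Moore to get $\xi=\bar\zeta^2$ with $\bar\zeta$ primitive, observe via the Cartan formula that $\bar\zeta$ is $A$-annihilated, desuspend to a spherical class $\xi'\in H_{2d-1}Q_0S^0$, and invoke Theorem \ref{primitivebasis} to write $\xi'$, modulo decomposables, as $\sum_\lambda Q^{A_\lambda}p_{B_\lambda}$ with every summand of excess one. The gap is in the last step, and it is genuine. First, your claimed equivalence --- that ruling out an even common leading entry $d$ is ``equivalently'' ruling out that $\bar\zeta$ is a square --- is false: $\bar\zeta=\sum Q^{C_\lambda}x_1$ can have $d=\dim\bar\zeta$ even while some $C_\lambda$ has excess greater than one. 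Second, your $Sq^1_*$ computation is applied to the wrong class: acting on $\bar\zeta$, the Nishida relation $Sq^1_*Q^{a}=(a-1)Q^{a-1}$ constrains the leading entries of the tails $C_\lambda$ (the \emph{second} entries of $(A_\lambda,B_\lambda)$), which says nothing about the parity of $d=1+\sum c_{\lambda,i}$. Third, the ``Milnor--Moore descent'' you invoke for the case where $\bar\zeta$ is itself a square is only announced, not executed, and you yourself flag it as the bulk of the work; as stated it does not terminate in a contradiction.

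The paper closes exactly this hole by applying $Sq^1_*$ downstairs rather than upstairs. Splitting each sequence at its last odd entry (which must exist when $d$ is even, by Lemma \ref{kernelofr} applied to the primitive $\xi'$), one rewrites $\xi'=Q^d\eta+D$ where $\eta=\sum Q^{I'}p_{I''}*[-2^{l(I)+1}+2^{l(I'')}]$ has dimension $d-1$. The first summand is primitive because the $p_{I''}$ are and $Q^i$ preserves primitives, so $D$ is a primitive decomposable class in the odd dimension $2d-1$, hence $D=0$. Now a single Nishida relation gives $Sq^1_*\xi'=(d-1)Q^{d-1}\eta=\eta^2\neq 0$ whenever $d$ is even, contradicting $A$-annihilation of the spherical class $\xi'$ --- and this works uniformly, with no need to know whether $\eta$ or $\bar\zeta$ is itself a square, so the higher-power-of-two case never has to be treated separately. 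If you want to complete your version, replace your descent by this one computation on the odd-dimensional desuspension.
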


\begin{proof}
The stabilisation map $\Omega^iS^{i+n}\to QS^n$ is an $i$-fold loop map which induces an injection in homology \cite[Proposition 3.1]{Zare-Els-1}. Therefore, for $\xi=h(f)$ with $f:S^m\to\Omega^iS^{i+n}$, it is enough to prove the above lemma for $E^\infty f:S^m\to QS^n$ where by abuse of notation $E^\infty$ denotes the stabilisation map $\Omega^iS^{i+n}\to QS^n$. By Lemma \ref{2^t-1} if $\xi\in H_*QS^1$ is a decomposable spherical class (which is primitive in particular) then $\xi=\zeta^{2^t}$ for some $t>0$. By Corollary \ref{formofspherical-1} if $\xi=h(f)$ with $f:S^{2d}\to QS^1$ then
$$h(f)=(\sum Q^Ix_1)^2=Q^d(\sum Q^Ix_1)=\sum Q^dQ^Ix_1$$
where $d=\dim Q^Ix_1$ and sum is running over certain admissible sequences $I$. By Lemma \ref{kernelofsuspension-0}, for the adjoint map $f':S^{2d-1}\to Q_0S^0$ we have
$$h(f')=Q^d(\sum Q^I[1]*[-2^{l(I)+1}])+D_1=\sum Q^dQ^I[1]*[-2^{l(I)+1}]+D_1$$
where $D_1$ is a sum of decomposable classes. Since $h(f')$ is a primitive class, then the non-decomposable part of the above expression must belong to the kernel of the square root map. If $d$ is even then by Lemma \ref{kernelofr} each $I$ in the above expression for $h(f')$ must have at least one odd entry. Writing in terms of the notation that we used in the proof of Theorem \ref{primitivebasis}, we may rewrite $h(f')$ as
$$h(f')=\sum Q^dQ^{I'}Q^{I''}[1]*[-2^{l(I)+1}]+D_1=\sum Q^dQ^{I'}(p_{I''}+D_{I''})*[-2^{l(I)+1}+2^{l(I'')}]+D_1.$$
By Cartan formula, the part $Q^dQ^{I'}D_{I''}*[-2^{l(I)+1}+2^{l(I'')}]$ provides a sum of decomposable terms in $H_*Q_0S^0$. Hence, we have
$$h(f')=\sum Q^dQ^{I'}p_{I''}*[-2^{l(I)+1}+2^{l(I'')}]+D$$
where $D$ denotes a sum of decomposable terms. By diagonal Cartan formula \cite[Part I, Theorem 1.1(6)]{CLM}, applying an operation $Q^i$ to a primitive class yields another primitive class. Consequently, applying an iterated operation $Q^I$ to a primitive class yields a primitive class. Hence, any term $Q^dQ^{I'}p_{I''}$ in the above expression is primitive in $H_*Q_{2^{l(I')+1}}S^0$. The translations maps are isomorphisms of coalgebras, hence yield primitive classes. Therefore, the first sum in the above expression is primitive. As $h(f')$ is primitive. We deduce that $D$ is also primitive. As a sum of decomposable classes, $D$ must be a square but living in the odd dimension $2d-1$. Hence, $D=0$. Consequently,
$$h(f')=Q^d(\sum Q^{I'}p_{I''}*[-2^{l(I)+1}+2^{l(I'')}])$$
which upon applying $Sq^1_*$, using Nishida relation $Sq^1_*Q^d=Q^{d-1}$, yields
$$Sq^1_*h(f')=(\sum Q^{I'}p_{I''}*[-2^{l(I)+1}+2^{l(I'')}])^2\neq 0.$$
But, this contradicts the fact that $h(f')$ is $A$-annihilated. Hence, $d$ cannot be even.
\end{proof}

In particular, the above lemma implies that if $\xi=\zeta^{2^t}\in H_*\Omega^iS^{n+i}$ with $n>0$ is spherical then $t<2$.

\begin{rmk}
One may ask why we have not performed such a technique to eliminate higher powers of $2$ in $H_*Q_0S^0$ from being spherical. It is possible to do this, but then for the action of $Sq^1_*$ we obtain a square class in $H_*QS^{-1}$ which is an exterior algebra. For this reason this technique fails. It might be possible to adjoint such a map another time to land in $H_*QS^{-1}$ and then apply $Sq^1_*$. But, in this case we don't know whether the class the is obtained is nontrivial, and at the moment we have not resolved this case
\end{rmk}

\subsection{Higher powers of $2$ in $H_*Q\Sigma^2X$}
Recall some simple observations. First, note that for any space $Y$ the cohomology $H^*\Sigma Y$ does not have any nontrivial product. Consequently, any class in $H_*\Sigma Y$ is primitive. By diagonal Cartan formula \cite[Part I, Theorem 1.1(6)]{CLM}, if $\zeta\in H_*QZ$ is a primitive class so $Q^I\zeta$ is where $Z$ is an arbitrary space. This implies that all classes $Q^I\Sigma y_\alpha$ in $H_*Q\Sigma Y$ are primitive. Now, we are able to prove Theorem \ref{2^t}(ii) which completes proof of Theorem \ref{2^t}.

\begin{thm}\label{2^t-2}
Suppose $X$ is an arbitrary space of finite type and $\zeta^2\in H_*Q\Sigma^2X$ is spherical, then $\zeta$ is odd dimensional.
\end{thm}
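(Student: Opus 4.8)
The plan is to reproduce the scheme of Lemma \ref{2^t-1}, but to run the $Sq^1_*$-argument inside $H_*Q\Sigma X$ instead of $H_*Q_0S^0$. The decisive simplification is that, as recalled just before the statement, every generator $Q^I\Sigma x_\alpha$ of $H_*Q\Sigma X$ is already primitive, so the subtle bookkeeping through the basis $p_{I''}$ of Theorem \ref{primitivebasis} is not needed here. Suppose, for a contradiction, that $\dim\zeta=d$ is even and write $\zeta^2=h(f)$ for a map $f\colon S^{2d}\to Q\Sigma^2X$. Using the identity $Q\Sigma X=\Omega Q\Sigma^2X$ together with the stable desuspension isomorphism $\pi_{2d}^s\Sigma^2X\simeq\pi_{2d-1}^s\Sigma X$, I would pass to the adjoint $f'\colon S^{2d-1}\to Q\Sigma X$, for which $h(f)=\sigma_*h(f')$ under $\sigma_*\colon H_*Q\Sigma X\to H_{*+1}Q\Sigma^2X$ (exactly as in Corollary \ref{formofspherical-1}(ii)). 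Since $h(f)\neq0$ we have $h(f')\neq0$, and as a Hurewicz image $h(f')$ is primitive and $A$-annihilated.

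The first task is to determine the shape of $h(f')$. Writing $h(f')=P+D_1$ with $P$ the linear span of polynomial generators $Q^{I_j}\Sigma x_{\alpha_j}$ and $D_1$ decomposable (this splitting exists because $H_*Q\Sigma X$ is polynomial), primitivity of the generators forces $P$ to be primitive, so $D_1=h(f')-P$ is a decomposable primitive sitting in the odd degree $2d-1$. By the Milnor--Moore sequence \cite[Proposition 4.23]{MM} a decomposable primitive is a square, hence even-dimensional, so $D_1=0$ and $h(f')=\sum_j Q^{I_j}\Sigma x_{\alpha_j}$. Applying $\sigma_*$ gives $\zeta^2=\sum_j Q^{I_j}\Sigma^2 x_{\alpha_j}$. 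Comparing this with the monomial expansion of the square $\zeta^2$, in which every generator occurs to an even power, no summand $Q^{I_j}\Sigma^2 x_{\alpha_j}$ can be an honest generator of $H_*Q\Sigma^2X$; each must satisfy $\ex(I_j)=\dim\Sigma^2x_{\alpha_j}$, i.e. $Q^{I_j}\Sigma^2x_{\alpha_j}=(Q^{I_j^-}\Sigma^2x_{\alpha_j})^2$ with $I_j=(d,I_j^-)$ and $\dim Q^{I_j^-}\Sigma^2x_{\alpha_j}=d$. Setting $w=\sum_j Q^{I_j^-}\Sigma x_{\alpha_j}$, which has $\dim w=d-1$, this reads $h(f')=Q^dw$.

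Finally I would apply $Sq^1_*$. Because $d$ is even, the Nishida relation $Sq^1_*Q^{2m}=Q^{2m-1}$ has no surviving tail term, so
$$Sq^1_*h(f')=Q^{d-1}w=w^2,$$
the last equality holding since $d-1=\dim w$. As $h(f')=Q^dw\neq0$ we have $w\neq0$, hence $w^2\neq0$, which contradicts the fact that the spherical class $h(f')$ is $A$-annihilated. Thus $d$ cannot be even, and $\zeta$ is odd-dimensional.

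The step demanding the most care is the second paragraph: one must verify that the adjoint $f'$ genuinely realizes $h(f)$ as $\sigma_*h(f')$ and that the decomposable remainder $D_1$ is eliminated. In $H_*Q_0S^0$ this elimination was the technical core of Lemma \ref{2^t-1} and needed the primitive basis $p_{I''}$; over the suspension $Q\Sigma X$ it is immediate, since the generators $Q^I\Sigma x_\alpha$ are automatically primitive and the parity obstruction (degree $2d-1$ odd, versus a square being even) does the whole job. The single genuine computation is the Nishida relation $Sq^1_*Q^d=Q^{d-1}$ for $d$ even together with $Q^{d-1}w=w^2$.
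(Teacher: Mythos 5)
Your proposal is correct and follows essentially the same route as the paper's proof: kill the decomposable part of $h(f')$ using that the generators $Q^I\Sigma y_\alpha$ over a suspension are primitive (so the remainder is a decomposable primitive in odd degree, hence zero by Milnor--Moore), identify $h(f')=Q^dw$ with $\dim w=d-1$, and derive the contradiction $Sq^1_*h(f')=Q^{d-1}w=w^2\neq 0$ from the Nishida relation for even $d$. The only difference is cosmetic ordering — you pin down $h(f')$ first and push forward to match $\zeta^2$, whereas the paper starts from $h(f)=\zeta^2=Q^d\zeta$ and pulls back — and your write-up is, if anything, slightly more careful about why each summand of $\zeta^2$ must have excess equal to dimension.
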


\begin{proof}
Suppose $f:S^{2d}\to Q\Sigma^2X$ is given with $h(f)\neq 0$. For $Y=\Sigma X$, by Corollary \ref{formofspherical-1}, we may write
$$h(f)=(\sum Q^I\Sigma y_\alpha)^2=Q^d(\sum Q^Iy_\alpha)$$
where the sum is running over certain generators $I$, $d=\dim Q^I\Sigma y_\alpha$, and $\{y_\alpha\}$ is an additive basis for $\widetilde{H}_*Y$. Consequently, for the adjoint map $f':S^{2d-1}\to Q\Sigma X$ we have
$$h(f')=Q^d(\sum Q^Iy_\alpha)+D$$
where $D$ is a sum of decomposable elements. Since $Y$ is a suspension, so the terms of the first sum are primitive. Hence, $D$ is a primitive which is also decomposable. Hence, $D$ is a square term which lives in dimension $2d-1$. Therefore, $D=0$. If $d$ is even, then applying $Sq^1_*$ yields
$$Sq^1_*h(f')=(\sum Q^Iy_\alpha)^2\neq 0.$$
This contradicts the fact that $h(f')$ is $A$-annihilated. This completes the proof.
\end{proof}

\section{Proof of Main results}
We start with recalling some well known facts due to Wellington \cite{Wellington-thesis} modified to the case of $p=2$ and $n=+\infty$. Let $X$ be a path connected space. For the reduced homology $\widetilde{H}_*X$ of $X$, $S_\infty X$ be the set $\{Q^Ix:I\textrm{ admissible, }\ex(I)\geqslant\dim x\}$. Note that $H_*QX$ is just free algebra generated by $S_\infty X$ modulo the relation $Q^d\zeta=\zeta^2$ if $d=\dim\zeta$. Let $M_\infty X$ be the free $\Z/2$-vector space generated by $S_\infty X$ and let $M_\infty^{-}X$ be the submodule of $M_\infty X$ generated by all $I$ with $I$ having only odd entries. The obvious monomorphism $M_\infty X\to H_*QX$ respects the action of Steenrod operations, where $M_\infty X$ is forced to have an $A$-action through Nishida relations. Write $\mathrm{ann}(M_\infty X)$ for the submodule of $M_\infty X$ consisting of all $A$-annihilated elements of $M_\infty X$. The following is a part of \cite[Theorem 5.3]{Wellington-thesis} that will be used here.

\begin{thm}\label{Wellington5.3}
Suppose $y\in\mathrm{ann}(M_\infty X)$ with $\dim y$ being odd. Then, $y\in M_\infty^{-} X$.
\end{thm}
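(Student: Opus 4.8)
The plan is to run an induction on the \emph{position} of the entries inside the admissible sequences, using the Nishida relations to isolate one entry at a time. First I would make two reductions. Since both $\mathrm{ann}(M_\infty X)$ and $M_\infty^{-}X$ are graded, I may assume $y$ is homogeneous of odd dimension. Next, a generator $Q^Ix$ with $\ex(I)=\dim x$ maps (writing $I=(i_1,I')$) to the square $(Q^{I'}x)^2$, which is \emph{even}-dimensional; hence no such boundary generator can occur in an odd-dimensional $y$, and every monomial $Q^Ix_\alpha$ appearing in $y$ has $\ex(I)>\dim x_\alpha$ strictly. This strictness is what guarantees that the classes obtained by lowering a leading index remain honest, nonzero generators of $M_\infty X$ (admissibility is also preserved, since $i_1-1<i_1\leqslant 2i_2$).

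The base case concerns the outermost entry and is governed by $Sq^1_*Q^a=(a-1)Q^{a-1}$, the only surviving term of the Nishida relation for $r=1$. Applied to a monomial this gives $Sq^1_*(Q^{i_1}Q^{i_2}\cdots Q^{i_s}x)=(i_1-1)Q^{i_1-1}Q^{i_2}\cdots Q^{i_s}x$, so $Sq^1_*$ sees only the leading index $i_1$: it kills the term when $i_1$ is odd and decrements $i_1$ when $i_1$ is even. Writing $y=\sum_k Q^{I_k}x_{\alpha_k}$ as a sum of distinct generators, the assignment $Q^{(i_1,I')}x_\alpha\mapsto Q^{(i_1-1,I')}x_\alpha$ is injective on generators with even leading entry (the decrement is injective on even integers and the tail is untouched), and by the previous paragraph each image is a nonzero generator. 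Hence no cancellation is possible, and $Sq^1_*y=0$ forces every $I_k$ to have an odd leading entry.

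For the inductive step I would detect the parity of the $j$-th entry assuming $i_1,\dots,i_{j-1}$ are already odd in every term. The mechanism is that higher operations transport a $Sq^1_*$ inward: for example
$$Sq^2_*(Q^aw)=\binom{a-2}{2}Q^{a-2}w+Q^{a-1}Sq^1_*w,$$
so when $a$ is odd the second summand carries a $Sq^1_*$ onto $w$, reaching the next slot, where it again reads off a parity as the coefficient $(i_j-1)$. Iterating this transport through the (odd) slots $i_1,\dots,i_{j-1}$ produces a composite $\theta_j\in A$ whose value on $Q^Ix_\alpha$ is, modulo terms strictly smaller in a suitable ordering, the single monomial obtained by decrementing $i_1,\dots,i_j$, carrying the coefficient $(i_j-1)$. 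Ordering generators first by length and then reverse-lexicographically by $I$, this decremented monomial is a distinguished leading term that cannot be produced from any other generator of $y$, so $\theta_j y=0$ forces $i_j$ odd throughout. Running $j$ up to the maximal length gives $y\in M_\infty^{-}X$. The innermost Nishida terms involving $Sq^t_*x_\alpha$ (nonzero when $\widetilde H_*X$ has a nontrivial Steenrod action) only lower the weight in the ordering and so do not disturb the leading-term bookkeeping.

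The hard part will be exactly this last bookkeeping. Unlike $Sq^1_*$, the operations needed to reach an inner slot drag along parasitic terms such as the $\binom{a-2}{2}Q^{a-2}w$ above, and one must check both that the binomial coefficients controlling the transport are nonzero mod $2$ along the odd slots and that every parasitic contribution is strictly lower in the chosen ordering, so that the parity-detecting term survives uncancelled. This is the delicate combinatorial core of the statement; it is carried out in full in Wellington's thesis \cite[Theorem 5.3]{Wellington-thesis}, whose leading-term conventions I would adopt to make the ``no cancellation'' claim precise.
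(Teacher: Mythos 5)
The paper gives no proof of this statement at all: it is quoted verbatim as ``a part of \cite[Theorem 5.3]{Wellington-thesis}'', so the only justification on record is the citation. Your sketch is a correct outline of the standard Nishida-relation argument (the $Sq^1_*$ base case is complete and right, and the $Sq^2_*$ step for the second entry does work as you describe), but since you explicitly defer the delicate no-cancellation bookkeeping for the inner slots to the same reference the paper cites, your treatment ultimately rests on exactly the same source and is consistent with the paper's.
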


We wish to prove Theorem \ref{main1} which generalises \cite[Theorem 2.3]{Zare-Els-1} to the case $n=1$. Recall that according to \cite[Theorem 2.3]{Zare-Els-1} if $\xi\in H_*\Omega^lS^{n+l}$ is any class of dimension greater than $2^ln+2^{l-1}(l-2)+2$ with $n>1$ and $4\leqslant l\leqslant +\infty$ then it is not spherical. Here, we proof the theorem for $n=1$.
\begin{proof}[Proof of Theorem \ref{main1}]
\textbf{Case of $X=S^{-1}$.} Suppose $f:S^d\to\Omega^lS^{l+1}\to QS^1$ is given with $\xi=h(f)\neq 0$. If $\sigma_*\xi\neq 0$ then \cite[Theorem 2.3]{Zare-Els-1} provides an upper bound on $\dim\xi+1$ in terms of $l-1$ and $n+1$ which immediately yields the claimed upper bound on $\dim\xi$. Hence, suppose $\sigma_*\xi=0$. Then by Lemma \ref{2^t-1} and Theorem \ref{2^t}(i) $\xi=\zeta^2$ for some odd dimensional class $\zeta$. By \cite[Lemma 3.5]{Zare-Els-1}, $\xi$ can only be expressed as a sum of classes of the form $Q^Ix_1$, hence $\xi\in M_\infty S^1$. By Theorem \ref{Wellington5.3} (see also \cite[Theorem 5.3]{Wellington-thesis}), $\xi\in M_\infty^{-}S^1$ which means that written in upper indexed operations, if $Q^Ix_1$ is any term of $\zeta$ then it must only consist of odd entries. Equivalently, if $Q_Jx_1$ is any terms of $\zeta$ then it has to be strictly increasing. The maximum dimension of such $J$ would be achieved for $J=(1,2,\ldots,l-1)$. We then compute that
$$\begin{array}{lll}
\dim (Q_Jx_1) &    =     & 1+2(2+2(3+2(\cdots+(l-2+2((l-1)+2)))\cdots)\\
              &    =     & \sum_{i=0}^{l-2}2^{i-1}i+2^{l-1}\\
              &    =     & 2^{l-2}((l-1)-1)+1+2^{l-1}
%              &    =     & \sum_{k=1}^s2^{k-1}j_k+2^sn\\
 %             &\leqslant & \sum_{k=1}^s2^{k-1}(l-1)+2^sn\textrm{ (since $j_k<l$)}\\
  %            &\leqslant & \sum_{k=1}^{l-1}2^{k-1}(l-1)+2^{l-1}n\textrm{ (since $s\leqslant l-1$)}\\
   %           &    =     & (l-1)\sum_{k=1}^{l-1}2^{k-1}+2^{l-1}n \\
    %          &    =     & (l-1)(2^{l-1}-1)+2^{l-1}n\textrm{ (since $\sum_{k=1}^{l-1}2^{k-1}=2^{l-1}-1$)}.
              \end{array}$$
where we have used the equality $\sum_{i=0}^k2^{i-1}i=2^k(k-1)+1$.
Consequently,
$$\dim h(f)\leqslant 2(2^{l-2}((l-1)-1)+1+2^{l-1})=2^{l-1}l+2.$$
This implies that there are no spherical classes above this dimension.\\
\textbf{Case of $\Sigma^2X$.} The proof is similar to the previous case and is based on induction. If $\sigma_*h(f)\neq 0$ then $\sigma_*h(f)$ is a spherical class in $H_{i+1}Q\Sigma^3X$. Applying the inductive hypothesis, we have a bound on $i+1$ which immediately gives the desired bound on $i$. Therefore, assume $\sigma_*h(f)=0$. By Lemma \ref{kernelofsuspension}, $h(f)$ is a decomposable class in $H_*Q\Sigma Y$ with $Y=\Sigma X$. By Lemma \ref{2^t-1} and Theorem \ref{2^2-2}, $h(f)=\zeta^2$ for an odd dimensional $A$-annihilated primitive class $\zeta\in H_*Q\Sigma Y$. By Corollary \ref{formofspherical-1}, if $f:S^d\to Q\Sigma Y$ is given with $h(f)\neq 0$, then $h(f)$ is a sum of certain terms of the form $Q^I\Sigma y_\alpha$ which as $h(f)$ is a square means that $\ex(Q^I\Sigma y_\alpha)=0$ for any term involved in $h(f)$. Consequently, by Lemma \ref{Wellington5.3}, for $h(f)=(\sum Q^{I_1}\Sigma y_\alpha)^2$ with $I_1=(i,\ldots,i_s)$ the class $I_1$ will consist of only odd entries. Also note that, writing in lower indexed operations, $Q^{I_1}\Sigma y_\alpha=Q_J\Sigma y_\alpha$, for $I_1$ having only odd entries is the same as $j_k+j_{k+1}$ being odd \cite[Lemma 7.1]{Zare-Els-1}. Also recall that $I_1$ admissible is the same as $J$ being nondecreasing. Together with $j_k+j_{k+1}$ being odd for $k=1,\ldots,s-1$ if $J=(j_1,\ldots,j_s)$, this latter implies that $J$ has to be strictly increasing, i.e. $j_1<j<\cdots<j_s$.\\
Next, note that the stabilisation map $E^\infty:\Omega^l\Sigma^{l+2}X\to Q\Sigma^2X$ in homology sends classes of the form $Q^I\Sigma y_\alpha$ identically where $\{y_\alpha\}$ is a homogeneous basis for $\widetilde{H}_*Y$ with $Y=\Sigma X$. Consequently, if $f_1:S^d\to\Omega^l\Sigma^{l+2}X$ is any pull back of $f$, then modulo $\ker(E^\infty_*)$, $h(f_1)\in H_*\Omega^l\Sigma^{l+2}X$ precisely will consist of the same terms $Q^I\Sigma y_\alpha$ that belong to $h(f)$.\\
Now we sum up. Suppose $h(f)=(Q_J\Sigma y_\alpha)^2$ modulo other terms where $J=(j_1,\ldots,j_s)$ has to be a strictly increasing sequence with $j_k+j_{k+1}$ being odd. Our aim is to find the maximum possible dimension for such a class. If $H_*X$ has its top class in dimension $n$, then $H_*\Sigma Y=H_*\Sigma^2X$ has its top class in dimension $n+2$. The maximum dimension for $Q_J\Sigma y_\alpha$ will be achieved if we choose $J=(1,2,\ldots,l-1)$ and $\Sigma y_\alpha$ is a top dimensional class. Let $\zeta_{n+2}\in H_*\Sigma Y$ be a class of dimension $n+2$. We can compute that
$$\begin{array}{lll}
\dim (Q_J\zeta_{n+2}) &    =     & 1+2(2+2(3+2(\cdots+(l-2+2((l-1)+2(n+2))))\cdots)\\
                &    =     & \sum_{i=0}^{l-2}2^{i-1}i+2^{l-1}(n+2)\\
                &    =     & 2^{l-2}((l-1)-1)+1+2^{l-1}(n+2)
%               &    =     & \sum_{k=1}^s2^{k-1}j_k+2^sn\\
 %             &\leqslant & \sum_{k=1}^s2^{k-1}(l-1)+2^sn\textrm{ (since $j_k<l$)}\\
  %            &\leqslant & \sum_{k=1}^{l-1}2^{k-1}(l-1)+2^{l-1}n\textrm{ (since $s\leqslant l-1$)}\\
   %           &    =     & (l-1)\sum_{k=1}^{l-1}2^{k-1}+2^{l-1}n \\
    %          &    =     & (l-1)(2^{l-1}-1)+2^{l-1}n\textrm{ (since $\sum_{k=1}^{l-1}2^{k-1}=2^{l-1}-1$)}.
              \end{array}$$
where we have used the equality $\sum_{i=0}^k2^{i-1}i=2^k(k-1)+1$.
Consequently,
$$\dim h(f)\leqslant 2(2^{l-2}((l-1)-1)+1+2^{l-1}(n+2))=2^l(n+2)+2^{l-1}(l-2)+2.$$
This provides the claimed upper bound.
\end{proof}

\section{Applications to bordism of immersions}
We start with fixing some notation and conventions. For a $k$-dimensional vector bundle $\xi\to X$ with $X$ being compact, let $f:X\to BO(k)$ be the classifying map. For a subgroup $G<O(k)$, we say $\xi$ has a $G$-structure if the classifying map admits a lift $f_G:X\to BG$ whose composition with $BG\to BO(k)$ equals to $f$. This is that same as saying $\xi=f_G^*\gamma^k|_G$ with $\gamma^k\to BO(k)$ being the universal bundle whose restriction over $BG$ we denote by $\gamma^k|_G$. We abuse the notation by writing $\R^n$ for both the Euclidean space and the $n$-dimensional trivial bundle over an arbitrary base space.\\

\textbf{Bordism theory of immersions.} The theory of bordism for immersions arises from bordism theory of embeddings. For an ambient $n$-dimensional manifold $N$, the notion of bordism between two embeddings $i_j:M_j\to N$ of compact manifolds $M_i$ with $i=0,1$ of the same codimension with the normal bundle of $v_{i_j}$ having a $\xi$-structure is understood. Through the Thom-Pontrjagin construction, it yields a $1$-$1$ correspondence
$$\Phi^{N,\xi}:\emb_\xi(N)\to[N_+,T\xi]$$
where $\emb_\xi(N)$ is the set of bordism classes of embeddings into $N$ with a $\xi$-structure, and $T(-)$ is the Thomification functor. Next, let $f:M\to N$ be a compact manifold with a codimension $k$ immersion into $N$ so that the normal bundle $\nu_f$ has a $\xi$-structure. The compactness of $M$ implies that there exists an embedding $i:M\to\R^d$ for some $d$. This allows to make the immersion $M\stackrel{f}{\to} N\stackrel{(1,*)}{\to} N\times\R^d$ isotopic to the embedding $(f,i):M\to N\times\R^d$. Moreover, we may define the bordism relation between to immersions of $M$ into $N$ in the usual way (see \cite{AsadiEccles-determining}). An application of Thom-Pontrjagin construction to $(1,*)\circ f$, or equivalently to $(f,i)$, provides us with a map $(\R^d\times N)_+\to M^{\nu_f\oplus\R^d}$ whose adjoint is a map $N_+\to\Omega^d\Sigma^d M^{\nu_f}$ which we may compose with $\Omega^d\Sigma^d T\xi$. Note that for an immersion, it is the stable normal bundle which does not depend on the immersion which means that one has to let $d\to +\infty$. To go in the reverse direction, one may apply the multi-compression theorem of Rourke and Sanderson \cite[Theorem 4.5]{RourkeSanderson-Comperssion.I} (see also \cite[Theorem 2.2]{EcclesGrant}).

\begin{thm}[The Multi-Compression Theorem]
Suppose $M^{n-k}\to N^n\times\R^l$ is an embedding with $l$ linearly independent normal vector fields so that $\nu_f\simeq\nu\oplus\R^l$ for some $k$-dimensional vector bundle $\nu$. If $k>0$ then $g$ is isotopic to an embedding $g':M\to N\times\R^l$ so that the composition
$$f:M\stackrel{g'}{\longrightarrow} N\times\R^l\longrightarrow N$$
is an immersion with $\nu_f\simeq\nu$. Here, the second arrow on right is just projection map.
\end{thm}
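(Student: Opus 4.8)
The plan is to deduce the Multi-Compression Theorem from the single-variable Compression Theorem of Rourke and Sanderson by induction on $l$, straightening the given normal vector fields to the vertical direction one at a time. The single-variable version, which I take as the substantive geometric input, asserts the following: if $W^w$ is embedded (or immersed) in $P^p\times\R$ carrying a normal vector field $\alpha$, and the codimension $p-w$ of $W$ in $P$ is at least $1$, then $W$ is isotopic, through an isotopy carrying $\alpha$, to a position in which $\alpha$ is everywhere parallel to the positive $\R$-direction; the projection $P\times\R\to P$ then restricts to an immersion of $W$ whose normal bundle is the complement of $\alpha$. Essentially all of the content of \cite{RourkeSanderson-Comperssion.I} is concentrated here.

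I would first record the dimension bookkeeping that makes the hypothesis $k>0$ exactly the condition needed at every stage. Writing $N\times\R^l=(N\times\R^{l-1})\times\R$, the codimension of $M^{n-k}$ inside $N\times\R^{l-1}$ is $(n+l-1)-(n-k)=k+l-1$, which is at least $1$ whenever $k>0$. Thus the single Compression Theorem applies with $P=N\times\R^{l-1}$ and $\alpha=\alpha_l$ the last of the given independent normal fields (these trivialise the $\R^l$-summand of the normal bundle $\nu_g\simeq\nu\oplus\R^l$ of $g$). It produces an isotopy of $g$, keeping $M$ embedded in $N\times\R^l$, after which $\alpha_l$ points vertically in the last coordinate. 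Since $\alpha_1,\dots,\alpha_{l-1}$ were independent from $\alpha_l$, I may correct them to lie in the $T(N\times\R^{l-1})$ directions, so that projecting $N\times\R^l\to N\times\R^{l-1}$ along the now-vertical $\alpha_l$ carries $M$ to an immersion equipped with an independent $(l-1)$-frame of normal fields and residual normal bundle $\nu\oplus\R^{l-1}$.

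I would then invoke the inductive hypothesis, the immersed form of the Multi-Compression Theorem in $N\times\R^{l-1}$, to straighten $\alpha_1,\dots,\alpha_{l-1}$ to the standard vertical frame by an isotopy supported in the $N\times\R^{l-1}$ directions. Lifting this isotopy back up along the vertical $\alpha_l$ keeps $\alpha_l$ vertical throughout, so after the inductive step all $l$ fields are simultaneously vertical with $M$ still embedded in $N\times\R^l$. Composing all the isotopies gives the required ambient isotopy from $g$ to an embedding $g':M\to N\times\R^l$ whose normal $\R^l$-frame is standard; the projection $f:M\to N\times\R^l\to N$ is then an immersion, and its normal bundle is the complementary summand $\nu$. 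The base case $l=1$ is the single Compression Theorem with $P=N$ and codimension $k\geqslant 1$.

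The main obstacle is entirely geometric and twofold. The deeper point is the single Compression Theorem itself: the existence of an isotopy rotating a normal field to the vertical while preserving embeddedness is exactly what fails in codimension zero, and the hypothesis $k>0$ is precisely what supplies the room for the Rourke-Sanderson straightening flow. The second, more delicate point in the induction is that verticalising one field must neither destroy the independence of the remaining fields under projection nor be undone by the later compressions; I would therefore carry out each stage through an ambient isotopy respecting the relevant product-with-$\R$ structure, so that already-straightened directions are preserved, and treat the single-compression step as the cited black box.
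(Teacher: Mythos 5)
The paper does not prove this statement at all: it is quoted verbatim as an external input, namely Theorem 4.5 of Rourke and Sanderson's \emph{The compression theorem. I} (the paper also points to Theorem 2.2 of Eccles--Grant), so there is no internal proof to compare yours against. That said, your outline is essentially the deduction Rourke and Sanderson themselves give: induct on $l$, use the single (codimension $\geqslant 1$) Compression Theorem to verticalise the last normal field, pass to the covered immersion in $N\times\R^{l-1}$ with its residual $(l-1)$-frame, and invoke the inductive hypothesis in its \emph{immersed} form before lifting back. You correctly identify the two genuine pressure points: all the geometric content sits in the single Compression Theorem (which you rightly treat as a black box, since reproving it here would amount to rewriting their paper), and the inductive step only closes because the compression isotopies can be taken ambient in $N\times\R^{l-1}$, so that crossing with the last $\R$-factor lifts them to ambient isotopies upstairs that preserve both embeddedness and the already-verticalised field. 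If you wanted to turn the sketch into a complete argument you would need to make that ambient-isotopy point explicit rather than asserting that the lift "keeps $\alpha_l$ vertical"; a mere regular homotopy of the immersion downstairs would not suffice. As a proposal it is sound and faithful to the source, but for the purposes of this paper the statement is a citation, not a result being established.
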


The outcome of this procedure, is an isomorphism
$$\Phi^{N,\xi}:\mathrm{Imm}_\xi(N)\to [N_+,QT\xi]$$
of groups provided by the Pontrjagin-Thom construction. Here $\mathrm{Imm}_\xi(N)$ is the group of bordism class of immersions into $N$ with a $\xi$-structure on their normal bundle. We note that, before letting $d\to +\infty$, we may define intermediate bordism groups, which we call unstable bordism groups, denoted by $\mathrm{Imm}_\xi^d(\R^m\times N)$ as the set of bordism classes (bordism group if $m+d>0$ which we shortly call unstable bordism group) of triples $(M,f,\iota)$ with $f:M\to\R^m\times N$ being a codimension $k$-immersion, $\iota:M\to\R^d\times (\R^m\times N)$ an embedding with a splitting of its normal bundle $\nu_\iota\simeq\R^d\oplus\nu_f$. The following is due to Koschorke and Sanderson \cite[Theorem 1.1]{KoschorkeSanderson}.
\begin{prp}\label{unstablebordismgroups-1}
Suppose $T\xi$ is path connected. Then, there is an isomorphism of sets (of groups if $m+d>0$)
$$\Phi^{N,\xi}_{m,d}:\mathrm{Imm}_\xi^d(\R^m\times N)\longrightarrow [N_+,\Omega^{m+d}\Sigma^dT(\xi)].$$
\end{prp}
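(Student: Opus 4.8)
The plan is to establish the correspondence by the Pontrjagin--Thom construction carried out at a finite, unstable level, in exact parallel with the stable isomorphism $\Phi^{N,\xi}:\imm_\xi(N)\to[N_+,QT(\xi)]$ recalled above, but keeping track of the two finite parameters $m$ and $d$. To define $\Phi^{N,\xi}_{m,d}$, start from a triple $(M,f,\iota)$: the embedding $\iota:M\to\R^d\times(\R^m\times N)=\R^{m+d}\times N$ has a tubular neighbourhood diffeomorphic to the total space of $\nu_\iota\simeq\R^d\oplus\nu_f$, and collapsing its complement produces a map from the one-point compactification $(\R^{m+d}\times N)^+\simeq\Sigma^{m+d}N_+$ to the Thom space $T(\nu_\iota)=\Sigma^d T(\nu_f)$. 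The chosen $\xi$-structure on $\nu_f$ gives a map $T(\nu_f)\to T(\xi)$, hence a composite $\Sigma^{m+d}N_+\to\Sigma^d T(\xi)$, and the loop--suspension adjunction converts this into a map $N_+\to\Omega^{m+d}\Sigma^d T(\xi)$. I would then check well-definedness: a bordism of triples is an embedding of the same type into $\R^{m+d}\times N\times[0,1]$, and running the collapse construction over the interval yields a homotopy between the two adjoints. The hypothesis that $T(\xi)$ is path connected is precisely what allows these based constructions to be made without extra component bookkeeping.

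The heart of the argument is the inverse construction. Given $g:N_+\to\Omega^{m+d}\Sigma^d T(\xi)$, I would pass to its adjoint $\hat g:\Sigma^{m+d}N_+\to\Sigma^d T(\xi)$ and deform it to be smooth and transverse to the zero section of $\Sigma^d T(\xi)=T(\R^d\oplus\xi)$. The preimage of the zero section is a closed manifold $M$ embedded in $\R^{m+d}\times N$ whose normal bundle is canonically the pullback of $\R^d\oplus\xi$; this simultaneously produces the embedding $\iota$, the splitting $\nu_\iota\simeq\R^d\oplus\nu_f$, and the $\xi$-structure on $\nu_f$. To recover the codimension-$k$ immersion $f:M\to\R^m\times N$ from $\iota$, I would invoke the Multi-Compression Theorem stated above: the $d$ linearly independent normal fields arising from the $\R^d$ summand (together with $k>0$) allow $\iota$ to be compressed to an immersion into $\R^m\times N$ with normal bundle $\nu_f$. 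Standard transversality and cobordism-invariance arguments then show this assignment is well defined on homotopy classes and is a two-sided inverse of $\Phi^{N,\xi}_{m,d}$.

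Finally, when $m+d>0$ there is at least one free Euclidean coordinate, which equips $\imm_\xi^d(\R^m\times N)$ with a group structure by juxtaposition of triples along that coordinate. Under $\Phi^{N,\xi}_{m,d}$ this corresponds to the loop-sum on $\Omega^{m+d}\Sigma^d T(\xi)$, which is available exactly because $m+d\geqslant 1$ makes the target a loop space; verifying that $\Phi^{N,\xi}_{m,d}$ intertwines the two operations upgrades the bijection of sets to an isomorphism of groups. I expect the main obstacle to lie in the reverse direction, namely guaranteeing that the transverse preimage can be compressed to a genuine immersion carrying the prescribed normal data, which is precisely the step for which the Multi-Compression Theorem is indispensable, together with the naturality and independence-of-choices checks needed to confirm that the two constructions are mutually inverse.
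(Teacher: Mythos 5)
Your argument is correct and follows essentially the same route the paper takes: the paper does not prove this proposition but attributes it to Koschorke--Sanderson and sketches exactly your construction in the surrounding text (Pontrjagin--Thom collapse of a tubular neighbourhood of $\iota$ for the forward map, and transversality followed by the Multi-Compression Theorem of Rourke--Sanderson for the inverse). Your write-up just fills in the standard details, including the observation that path-connectedness of $T(\xi)$ forces $k>0$ so that the compression step applies.
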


Note that there is an obvious procedure of stablisation the above bordism sets allowing $d\to +\infty$ which allows to define $\mathrm{Imm}_\xi(\R^m\times N)=\colim_{d\to +\infty}\mathrm{Imm}_\xi^d(\R^m\times N)$. Note that $T\xi$ is always path connected unless $\xi$ is a $0$-dimensional bundle, the case in which $f$ amounts to a finite cover of $N$. The application that we wish to present here, corresponds to the case of $m=0$, $\xi=\R^k$, and $N=\R^{n+k}$. For any codimension $k$ immersion $f$, then $\nu_f$ has an obvious $\gamma^k$-structure. According to Asadi and Eccles \cite[Lemma 2.2, Theorem 2.4]{AsadiEccles-determining}, for an immersion $f:M\to\R^{n+k}$, the normal Stiefel-Whitney numbers of $M$ are computed by the relation
$$\la w^Iw_k,h^s(\alpha)\ra$$
where $\alpha\in\pi_{n+k}QMO(k)$ is the element corresponding to the bordism class of $(M,f)$ under $\Phi^{\R^{n+k},\gamma^k}$, and $h^s:\pi_{n+k}QMO(k)\to H_{n+k}QMO(k)\to H_{n+k}MO(k)$ is the stable Hurewicz homomorphism. Moreover, the Stiefel-Whitney numbers of the $r$-fold intersection manifold of $f$ are determined by
$$\la w^Iw_{kr},(\xi_r)_*(p_r)_*h(\alpha)\ra$$
where $\xi_r:D_rMO(k)\to MO(kr)$ is induced by the forgetful map $B(O(k)\wr\Sigma_r)\to BO(k)$, and $p_r:\Sigma^\infty QX\to \Sigma^\infty D_rX$ is the stable projection map provided by the Snaith splitting. In particular, $p_1:\Sigma^\infty QX\to \Sigma^\infty X$ is the evaluation map which relates $h$ and $h^s$ with $h^s=(p_1)_*h$. Moreover, $\xi_1$ is the identity map, which shows that in the case of $r=1$ the first formula is derived from the second one. Note that, it is possible to have $h(\alpha)\neq0$ but $h^s(\alpha)=0$. This implies that it is possible to have an immersion bordant to a boundary, but some of its self-intersection manifolds are not. %Finally, note that if $f$ is an immersion with a trivialisation of its normal bundle, then we may replace $MO(k)$ with $S^k$, the Thom complex of the trivial bundle.
Now, we are able to prove Theorem \ref{SWnumbers1}.

\begin{proof}[Proof of Theorem \ref{SWnumbers1}]
By construction, there is a commutative diagram
$$\xymatrix{
\mathrm{Imm}_\xi^d(\R^m\times N)\ar[r]\ar[d]     & \mathrm{Imm}_\xi(\R^m\times N)\ar[d]\\
[N_+,\Omega^{m+d}\Sigma^dT(\xi)]\ar[d]^-h\ar[r]  & [N_+,\Omega^mQT(\xi)]\ar[d]^-h\\
H_t\Omega^{m+d}\Sigma^dT(\xi)\ar[r]              & H_t\Omega^{m}QT(\xi)
}$$
where $t=\dim N$ and the Hurewicz homomorphisms (downward arrows from second to third row) are defined by $h(f)=f_*[N]$ with $[N]$ being the fundamental class of $N$. Moreover, if the normal bundle of an immersion $f:M\to N$, classified by a map $M\to BO(k)$, admits a trivialisation, then the classifying map must factor as $M\to B1\to BO(k)$ which after Thomification yields a map $M^{\nu_f}\to S^k\to MO(k)$. For $m=0$, $\xi=\gamma^k$, $N=\R^{n+k}$, and $f:M\to\R^{n+k}$ a codimension $k$ immersion with a trivialisation of its normal bundle which also can be made isotopic to an embedding $M\to\R^d\times\R^{n+k}$, may extend the above diagram to a commutative diagram as
$$\xymatrix{
\mathrm{Imm}_{\R^k}^d(\R^{n+k})\ar[r]\ar[d]   & \mathrm{Imm}_{\gamma^k}^d(\R^{n+k})\ar[r]\ar[d]  & \mathrm{Imm}_{\gamma^k}(\R^{n+k})\ar[d]\\
\pi_{n+k}\Omega^{d}S^{d+k}\ar[r]\ar[d]^-h     & \pi_{n+k}\Omega^{d}\Sigma^dMO(k)\ar[d]^-h\ar[r]  & \pi_{n+k}QMO(k)\ar[d]^-h\\
H_{n+k}\Omega^{d}S^{d+k}\ar[r]                & H_{n+k}\Omega^{d}\Sigma^dMO(k)\ar[r]             & H_{n+k}QMO(k).
}$$
We wish to compute Hurewicz image, the composition of downward arrow on right, of those elements that belong to the image of $\mathrm{Imm}_{\gamma^k}^d(\R^{n+k})\to\mathrm{Imm}_{\gamma^k}(\R^{n+k})$ which also admit a trivialisation, that is we wish to compute the Hurewicz image of the element falling into the image of the composition given by the first row; ie we wish to compute the composition of arrows on the first row and right column. The theorem now follows from commutativity of the above diagram and choose $d$, $k$, and $n$ in a suitable manner. More precisely, choose $d=l$, $k$, and $n$ as in Theorem \ref{main1}. Consequently, in this range, the composition of downward arrows on the left is trivial. This would imply that $h(\alpha)=0$ for any $\alpha$ corresponding to a suitable chosen immersion as above. The formula of Asadi and Eccles, quoted above, implies that all of Stiefel-Whitney numbers of such immersion, as well as the Stiefel-Whitney numbers of the self-transverse immersions, are trivial. Since, we are dealing with immersion into Euclidean spaces, Thom's theory then proves the claim.
\end{proof}

%\section{Applications to bordism of immersions}
\section{Discussion}
We wish to focus on the implications of Theorem \ref{main1} in the special case of $X=S^n$ with $n>0$. For the ease of notation, we drop the pre-subindex $2$ from ${\pi_*}$. Suppose $f\in{\pi_d}QS^n\simeq{\pi_{d-n}^s}$, with $d>2^ln+2^{l-1}(l-2)+2$. For $l\geqslant 1$, we have
$$d>2^ln+2^{l-1}(l-2)+2>2n+(l-2).$$
On the other hand, for the isomorphism $\pi_{d}\Omega^lS^{n+l}\simeq\pi_{d+l}S^{n+l}\simeq\pi_{d-n}^s$ to hold, we need $d+l<2(n+l-1)$ which yields $d<2n+l-2$. This means that the upper bound provided in this paper, always forces us to fall beyond the stable range. Moreover, the exponential factor, is more likely to force us to land in a highly unstable range. However, with our current knowledge on the groups $\pi_iS^k$ and the behaviour of the suspension maps $\pi_iS^k\to \pi_{i+j}S^{k+j}$, Theorem \ref{main1} is far from a triviality. Note that the elements of $\pi_d\Omega^lS^{l+n}$ with $d$ chosen as above, map to $\pi_{d}QS^n\simeq\pi_{d-n}^s$ after $d$ times suspension (the least number of suspensions required is $d-2n-l+3$). A possible scenario in which our result would appear as a triviality is to have a positive answer to a question such as\\

\textbf{Question.} (i) Suppose $d>2^ln+2^{l-1}(l-2)+2$ such that $\pi_{d-n}^s\not\simeq 0$. Is the image of the suspension map $E^{d-2n-l+3}:\pi_d\Omega^lS^{l+n}\to\pi_{d-n}^s$ trivial?\\
(ii) Suppose $\pi_{d-n}^s\not\simeq 0$. Does there exist $k>0$ so that for $d>k$, the image of $\pi_{d}\Omega^lS^{n+l}\to\pi_{d-n}^s$ is trivial?\\

To the author's knowledge, there is no answer to this question or questions like this in the literature. \\

\appendix

\section{On the relation between Curtis and Eccles conjectures}
This part is mostly expository and well known. We wish to record some observations on the relations between two conjectures. Below, we shall write $P$ for the infinite dimensional real projective space, $P^n$ for the $n$-dimensional real projective space, and $P_n=P/P^{n-1}$ for the truncated projective space with its bottom cell at dimension $n$. We also write $X_n$ for the truncated of a $CW$-complex where all skeleta of dimension $<n$ are collapsed to a point, i.e. $X_n=X/X^n$ where $X^n$ is the $n$-skeleton of $X$.

\begin{lmm}
Curtis conjecture, implies Eccles conjecture for $S^k$ with $k>0$.
\end{lmm}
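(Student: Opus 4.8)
The plan is to transport a nonzero spherical class in $H_*QS^k$ down to a spherical class in $H_*Q_0S^0$ by iterated adjunction, feed it to Curtis conjecture \ref{Curtisconj}, and then read off which of the two alternatives in the Eccles conjecture \ref{Ecclesconj} is forced. So let $f\colon S^n\to QS^k$ with $h(f)\neq 0$ and $n>0$, $k>0$. Since $\pi^s_{n-k}=0$ for $n<k$ there is nothing to prove unless $n\geqslant k$. If $n=k$, then $h(f)\in H_kQS^k=\Z/2\langle x_k\rangle$ is the bottom class, so $\sigma_*h(f)=x_k\neq 0$ in $H_*S^k$ and $f$ is detected by homology; this is the first alternative. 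Hence I would assume $n>k$. Adjointing $f$ down $k$ times, using $\Omega QS^{j}\simeq QS^{j-1}$ for $j\geqslant 1$, produces the stable adjoint $\hat f\colon S^{n-k}\to Q_0S^0$, an element of $\pi_{n-k}Q_0S^0\simeq\pi^s_{n-k}$ whose mapping cone is stably $S^k\cup_{\hat f}e^{n+1}$; here $H_n S^k=0$ so detection by homology is impossible and the second alternative must be established.

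The key identity is $h(f)=\sigma_*^{(k)}h(\hat f)$, where $\sigma_*^{(k)}$ is the $k$-fold homology suspension $H_{n-k}Q_0S^0\to H_{n-k+1}QS^1\to\cdots\to H_nQS^k$. This follows from naturality of the Hurewicz homomorphism together with the standard fact that the homology suspension of the Hurewicz image of a once-looped adjoint recovers the Hurewicz image, iterated $k$ times and evaluated via $\sigma_*(Q^I[1]*[-2^{l(I)}])=Q^Ix_1$ and $\sigma_*Q^Ix_j=Q^Ix_{j+1}$. In particular $h(f)\neq 0$ forces $h(\hat f)\neq 0$, so $\hat f$ is a nonzero spherical class in $H_*Q_0S^0$ and Curtis conjecture \ref{Curtisconj} applies: $\hat f$ is either a Hopf invariant one or a Kervaire invariant one element.

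The heart of the argument is to exclude the Kervaire case. Here I would invoke the known computation (Curtis \cite{Curtis}, Wellington \cite{Wellington}) that the Hurewicz image in $H_*Q_0S^0$ of a Kervaire invariant one element is \emph{decomposable}, indeed the square of the generator realising the corresponding Hopf invariant one class. By Lemma \ref{kernelofsuspension-0} the homology suspension annihilates decomposables, so if $\hat f$ were Kervaire then (as $k\geqslant 1$) $\sigma_*^{(k)}h(\hat f)=0=h(f)$, contradicting $h(f)\neq 0$. Hence $\hat f$ is Hopf invariant one, say $\hat f\in\pi^s_{2^{j}-1}$. By Adams \cite{Adams-Hopfinv} such elements are precisely those detected by the primary operation $Sq^{2^{j}}\colon H^k\to H^{k+2^j}$ in their mapping cone $S^k\cup_{\hat f}e^{n+1}$, which is exactly the second alternative of the Eccles conjecture, completing the argument.

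The step I expect to be the main obstacle is the identity $h(f)=\sigma_*^{(k)}h(\hat f)$: matching the \emph{stable} adjoint appearing in the Eccles conjecture with the iterated \emph{loop} adjoint landing in $Q_0S^0$, checking that each adjunction contributes exactly one homology suspension, and keeping track of the translation factors $[-2^{l(I)}]$ under these suspensions. Once that is in place the dichotomy of Curtis conjecture maps cleanly onto that of Eccles: the indecomposable (Hopf) classes survive suspension and are detected by a primary operation, while the decomposable (Kervaire) classes die and so cannot occur in $H_*QS^k$ at all.
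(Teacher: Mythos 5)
Your proposal is correct and follows essentially the same route as the paper's proof: adjoint down $k$ times to $Q_0S^0$, apply Curtis conjecture, exclude the Kervaire case because its Hurewicz image is a square (hence decomposable and killed by homology suspension), and conclude that only Hopf invariant one elements remain, which are detected by a primary operation in the mapping cone. The only differences are cosmetic (your explicit treatment of the bottom-class case $n=k$ and citing Curtis/Wellington where the paper cites Madsen's thesis for the Kervaire Hurewicz image being $p_{2^i-1}^2$).
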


\begin{proof}
Suppose $f:S^{n+k}\to QS^k$ is given with $h(f)\neq 0$. By adjointing down $k$ times, we have a map, say $f':S^n\to QS^0$, so that $\sigma_*^kh(f')=h(f)$ where $\sigma_*^k:H_*Q_0S^0\to H_{*+k}QS^k$ is the $k$-fold iterated suspension homomorphism. In particular, $h(f')\neq 0$ in $H_*Q_0S^0$. Assuming Curtis conjecture, $f'$ must be either a Hopf invariant or a Kervaire invariant one element. It is well known (see for example \cite[Theorem 7.3]{Madsenthesis}) that the unstable Hurewicz image of a Kervaire invariant one element, if it exists, in $H_*Q_0S^0$ is square of a certain primitive class, say $p_{2^i-1}^2$. However, decomposable classes are killed by homology suspension. So, $f'$ and consequently $f$, as elements of ${\pi_*^s}$ can only be Hopf invariant one elements, that is detected by a primary operation in its mapping cone. This prove Eccles conjecture for $S^k$.
\end{proof}

On the other hand, Curtis conjecture can be deduced from Eccles conjecture, thanks to Kahn-Priddy theorem, and its algebraic version due to Lin.

\begin{lmm}
Eccles conjecture for $X=P$ implies Curtis conjecture.
\end{lmm}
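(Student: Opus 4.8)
The plan is to use the Kahn-Priddy theorem together with its algebraic refinement due to Lin in order to transport a hypothetical violating class for Curtis conjecture in $H_*Q_0S^0$ back to a spherical class in $H_*QP$ to which the Eccles conjecture for $X=P$ can be applied. Recall that the Kahn-Priddy theorem provides a map $\lambda:QP\to Q_0S^0$ (the transfer/Kahn-Priddy map) which is surjective on $2$-primary homotopy in positive degrees; that is, ${\pi_n}QP\to{\pi_n}Q_0S^0\simeq{\pi_n^s}$ is onto for $n>0$. I would begin by fixing an element $g\in{\pi_n}Q_0S^0$ with $n>0$ whose unstable Hurewicz image $h(g)\in H_*Q_0S^0$ is nonzero, so that $g$ is a potential counterexample to Conjecture \ref{Curtisconj}, and I must show $g$ is a Hopf invariant one or Kervaire invariant one element.

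\textbf{Step 1 (Lift along Kahn-Priddy).} By surjectivity of Kahn-Priddy in positive degrees there exists $f\in{\pi_n}QP$ with $\lambda_*(f)=g$. The key point is that this lift can be chosen so that $h(f)\neq 0$ in $H_*QP$; this is exactly where Lin's algebraic Kahn-Priddy theorem enters. The algebraic statement asserts that the map induced by $\lambda$ on the relevant Ext/homological level (equivalently, the interaction of $\lambda_*$ with the Hurewicz image) is compatible enough that a nonzero $h(g)$ forces the existence of a lift $f$ detected nontrivially in $H_*QP$. First I would verify that $\lambda_*h(f)=h(g)\neq 0$ by naturality of the Hurewicz homomorphism, which already guarantees $h(f)\neq 0$ once a lift $f$ exists with $\lambda_*(f)=g$.

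\textbf{Step 2 (Apply Eccles for $P$).} Since $h(f)\neq 0$ for $f:S^n\to QP$, the Eccles conjecture for $X=P$ applies: the stable adjoint of $f$ is either detected by homology (i.e. $h(f)$ is stably spherical, surviving the homology suspension $H_*QP\to H_*P$) or detected by a primary Steenrod operation in its mapping cone. I would analyze these two cases separately and push each one down to $Q_0S^0$ via $\lambda$. In the first case, $h(f)$ survives to $\widetilde H_*P$, and I would use the explicit action of $\lambda_*$ on homology together with the structure of $H_*P$ to identify the resulting class $h(g)$ as stably spherical in $H_*Q_0S^0$; such a class forces $g$ to be a Hopf invariant one element, which by Adams exists only in dimensions $1,3,7$. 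In the second case, detection by a primary operation in the mapping cone likewise transfers under $\lambda$ to show that $g$ is detected by a primary operation, again pinning $g$ down to the Hopf/Kervaire list by the classical analysis (as in the companion Lemma where the Kervaire classes are identified as squares $p_{2^i-1}^2$).

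\textbf{The main obstacle.} The hard part will be Step 1, namely ensuring that the Kahn-Priddy lift $f$ is chosen with $h(f)\neq 0$ rather than merely with $\lambda_*(f)=g$; surjectivity on homotopy alone does not control the Hurewicz image of the lift, so the genuine input is Lin's algebraic Kahn-Priddy theorem guaranteeing compatibility on the homological/Ext level. The second delicate point, internal to Step 2, is correctly matching the two Eccles alternatives with the Hopf-invariant-one and Kervaire-invariant-one cases after pushing forward by $\lambda_*$; in particular one must confirm that the ``detected by homology'' alternative cannot produce a class whose image under $\sigma_*$ vanishes (which would correspond to a decomposable, Kervaire-type square) unless that class is in fact accounted for among the Kervaire invariant one elements. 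I would organize the final write-up around these two alternatives, invoking Adams' Hopf invariant one theorem to terminate the first case and the known form of Kervaire invariant one Hurewicz images to terminate the second.
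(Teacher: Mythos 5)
Your overall strategy --- lift the potential Curtis counterexample through the Kahn--Priddy map $\lambda:QP\to Q_0S^0$, apply Eccles' conjecture for $P$ to the lift, and analyse the two alternatives --- is exactly the paper's. But you have misdiagnosed where the content lies. Your Step 1 is not the hard part and does not need Lin's theorem at all: as you yourself note in passing, naturality of the Hurewicz homomorphism gives $\lambda_*h(f)=h(\lambda_*f)=h(g)\neq 0$, so \emph{any} lift $f$ of $g$ automatically has $h(f)\neq 0$. Declaring this the ``main obstacle'' and invoking Lin's algebraic Kahn--Priddy theorem there is a misplacement of the key tool.

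The genuine work is in your Step 2, and your sketch of it contains an error. In the ``detected by homology'' case you propose to ``identify the resulting class $h(g)$ as stably spherical in $H_*Q_0S^0$.'' That cannot work: stably spherical means surviving to $\widetilde H_*S^0$, which vanishes in positive degrees (and the transfer $\Sigma^\infty P\to S^0$ induces zero on positive-degree mod $2$ homology), so the alternative ``detected by homology'' for $P$ does not push forward under $\lambda$ to the analogous statement for $S^0$. The correct mechanism --- and the place Lin's theorem is actually used --- is a shift of Adams filtration: ``detected by homology'' means the stable adjoint of the lift lies on the $0$-line of the Adams spectral sequence for $P$, and Lin's epimorphism $\ext_A^{s,t}(H^*P,\Z/2)\to\ext_A^{s+1,t+1}(\Z/2,\Z/2)$ then places $g=\lambda_*f$ on the $1$-line for the sphere, where only Hopf invariant one elements are detected. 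Likewise, in the second case one must first argue from the structure of $H^*P$ and the decomposability of the Steenrod squares that the detecting operation is some $Sq^{2^s}$ acting on a cell $a_{2^t}$, i.e.\ that the lift is detected on the $1$-line for $P$; Lin then puts $g$ on the $2$-line for the sphere, and one needs the nontrivial input (from \cite{Za-ideal}) that the only elements on the $2$-line with nonzero unstable Hurewicz image are the Kervaire invariant one classes. Without these two filtration-shifting steps and the $2$-line result, your case analysis does not close.
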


\begin{proof}
Consider the Kahn-Priddy map $\lambda:QP\to Q_0S^0$ which is an infinite loop map, inducing an epimorphism on ${\pi_*}$ on positive degrees \cite[Theorem 3.1]{Kahn-Priddy}, as well as an epimorphism on the level of Adams spectral sequences $\ext_A^{s,t}(H^*P,\Z/2)\to \ext_A^{s+1,t+1}(\Z/2,\Z/2)$ where $A$ denotes the mod $2$ Steenrod algebra \cite[Theorem 1.1]{Lin}. Suppose $f\in{\pi_n}Q_0S^0$ with $h(f)\neq 0$. Let $g\in{\pi_n}QP$ be any pull back of $f$ through $\lambda$. Then $g$ maps nontrivially under the unstable Hurewicz map ${\pi_n}QP\to H_nQP$. Assuming Eccles conjecture, implies that the stable adjoint of $g$ is detected either by homology or a primary operation in its mapping cone.\\
If the stable adjoint of $g$ is detected by homology, then it is detected on the $0$-line of the Adams spectral sequence for $P$. By Lin's result, $f=\lambda g$ is detected on the $1$-line of the Adams spectral sequence. The $1$-line of the Adams spectral sequence is known to detect Hopf invariant one elements, i.e. the stable adjoint of $f$ is a Hopf invariant one element.\\
Next, suppose the stable adjoint of $g$ is detected by a primary operation in its mapping cone. That is for some $i,j$ we have $Sq^ia_j=g_{n+1}$ in $C_{g'}$ where we write $g':S^n\to P$ for the stable adjoint of $g$. From the action of Steenrod algebra on $H^*P$ and decomposition of Steenrod squares to operations of the form $Sq^{2^t}$, for such an equation to hold in $C_{g'}$ we need $i=2^s$ and $j=2^t$ for some $s,t\geqslant 0$. From this, we see that $g'$ has to be detected in the $1$-line of the Adams spectra sequence for $P$. By Lin's theorem, this means that the stable adjoint of $f$ has to be detected in the $2$-line of the Adams spectral sequence. It is known that \cite{Za-ideal} the only elements on the $2$-line of the Adams spectral sequence that map nontrivially under $h$ are the Kervaire invariant one element, i.e. $f$ can only be a Kervaire invariant one element. This completes the proof.
\end{proof}

\bibliographystyle{plain}

%\bibliographystyle{plain}
%\bibliography{spherical}

\begin{thebibliography}{10}

%\bibitem{AkhmetevEccles-Browder}
%Pyotr M. Akhmetev and Peter J. Eccles.,
%{A geometric proof of Browder's theorem on the vanishing of Kervaire invariants.},
%{\em Tr. Mat. Inst. Steklova, 225(Solitony Geom. Topol. na Perekrest.)},
%{46--51, 1999}.

\bibitem{Adams-Hopfinv}
{J. F. Adams.},
{On the non-existence of elements of {H}opf invariant one},
{\em Ann. of Math. (2)},
Vol. 72, 1960.

\bibitem{AsadiEccles-determining}
Mohammad A. Asadi-Golmankhaneh and Peter J. Eccles.,
{Determining the characteristic numbers of self-intersection manifolds.},
{\em J. Lond. Math. Soc., II. Ser.},
{62(1):278--290, 2000.}.

\bibitem{AsadiEccles}
Mohammad A. Asadi-Golmankhaneh and Peter J. Eccles.,
{Double point self-intersection surfaces of immersions.},
{\em Geom. Topol.}
{4:149--170, 2000}.

%\bibitem{BE3}
%M.G. Barratt and Peter J. Eccles.,
%{$\Gamma^+$-structures. III: The stable structure of $\Omega^\infty\Sigma^\infty A$.},
%{\em Topology}
%{13:199--207, 1974}.

\bibitem{BeckerSchultz1}
J.C. {Becker} and R.E. {Schultz},
{Equivariant function spaces and stable homotopy theory. I.},
{\em Comment. Math. Helv.},
Vol. 49, 1--34, 1974.

%\bibitem{BoardmanSteer}
%J. M. Boardman and B. Steer.,
%{On Hopf invariants.},
%{\em Comment. Math. Helv.},
%{42:180--221, 1967}.

%\bibitem{Browder}
%William Browder.,
%{The Kervaire invariant of framed manifolds and its generalization.},
%{\em Ann. of Math.}.
%{(2) 90:157--186, 1969.}.

%\bibitem{CCML}
%J. Caruso, F.R. Cohen, J.P. May, and L.R. Taylor.,
%{James maps, Segal maps, and the Kahn-Priddy theorem.},
%{\em Trans. Am. Math. Soc.},
%{281:243--283, 1984.}.

%\bibitem{CCKN}
%F. R. Cohen, R. L. Cohen, N. J. Kuhn, and Joseph A. Neisendorfer.,
%{Bundles over configuration spaces.},
%{\em Pacific J. Math.},
%{104(1):47--54, 1983.}.

%\bibitem{CML}
%F. R. Cohen, J. P. May, and L. R. Taylor.,
%{Splitting of certain spaces $CX$.},
%{\em Math. Proc. Cambridge Philos. Soc.},
%{84(3):465--496, 1978.}.

\bibitem{CLM}
Frederick R. Cohen, Thomas J. Lada, and J.Peter May.,
{The homology of iterated loop spaces.},
{\em Lecture Notes in Mathematics. 533. Berlin-Heidelberg-New York: Springer-Verlag. VII},
{1976.}.

\bibitem{Curtis}
Edward B. Curtis.,
{The Dyer-Lashof algebra and the $\Lambda$-algebra.},
{\em Ill. J. Math.},
{19:231--246, 1975.}.

%\bibitem{Eccles-codimension}
%Peter John Eccles.,
%{Codimension one immersions and the Kervaire invariant one problem.},
%{\em Math. Proc. Camb. Philos. Soc.},
%{90:483--493, 1981.}.

\bibitem{EcclesGrant}
{Peter J. Eccles and Mark Grant},
{Bordism groups of immersions and classes represented by self-intersections},
{\em Algebr. Geom. Topol.},
Vol. 7, 1081--1097, 2007.

\bibitem{DyerLashof}
Eldon Dyer and R. K. Lashof.,
{Homology of iterated loop spaces.},
{\em Amer. J. Math.},
{84:35--88, 1962.}.

%\bibitem{Harper}
%John R. Harper.,
%{Secondary cohomology operations.},
%{\em Providence, RI: American Mathematical Society (AMS)},
%{2002.}.

\bibitem{HHR}
M. A. Hill, M. J. Hopkins, and D. C. Ravenel,
{On the nonexistence of elements of Kervaire invariant one.},
{\em Ann. of Math.},
Ann. of Math. (2) 184 (2016), no. 1, 1–262.

\bibitem{Hung-weakconjecture}
{Nguy\^en H.V. Hu'ng},
{The weak conjecture on spherical classes},
{\em Math. Z.},
Vol. 231(4), 727--743, 1999.

\bibitem{Hung-erratum}
Nguy\^en H.V. {Hu'ng}.,
{Erratum {T}o: ``Spherical classes and the algebraic transfer''.},
{\em Trans. Am. Math. Soc.},
Vol. 355, 3841--3842, 2003.

\bibitem{HungNam}
{Nguy\^en H.V. {Hu'ng} and {Tran Ng\d{o}c Nam}},
{The hit problem for the Dickson algebra.},
{\em Trans. Am. Math. Soc.},
Vol. 353(12), 5029--5040, 2001.

\bibitem{Hunter}
{Thomas J. Hunter},
{On {$H_*(\Omega^{n+2}S^{n+1};{\bf F}_2)$}},
{\em Trans. Amer. Math. Soc.},
Vol. 314, 405--420, 1989.

%\bibitem{Isaksen-stablestems}
%Daniel C. Isaksen.,
%{Stable stems.},
%{https://arxiv.org/abs/1407.8418.}.

%\bibitem{James}
%I.M. James.,
%{Reduced product spaces.},
%{\em Ann. Math.}
%{(2), 62:170--197, 1955.}

\bibitem{Kahn-Priddy}
Daniel S. {Kahn} and Stewart B. {Priddy},
{Applications of the transfer to stable homotopy theory.},
{\em Bull. Am. Math. Soc.},
Vol. 78, 981--987, 1972.

\bibitem{KZproj}
{Takuji Kashiwabara and Hadi Zare},
{Splitting {M}adsen-{T}illmann spectra {I}. {T}wisted transfer maps},
{To appear in the Bulletin of Belgian Mathematical Society.},

%\bibitem{KochmanMahowald}
%Stanley O. Kochman and Mark E. Mahowald.,
%{On the computation of stable stems.},
%{\em  In The Cech centennial (Boston, MA, 1993), volume 181 of Contemp. Math., pages 299--316. Amer. Math. Soc., Providence, RI,},
%{1995.}.

\bibitem{KoschorkeSanderson}
Ulrich Koschorke and Brian Sanderson.,
{Self-intersections and higher Hopf invariants.},
{\em Topology.},
{17:283--290, 1978.}

\bibitem{KudoAraki}
Tatsuji Kudo and Shoro Araki.,
{On $H_*(\Omega^N(S^n); Z_2)$.},
{\em Proc. Japan Acad.},
{32:333--335, 1956.}.

\bibitem{KudoAraki-Hn}
Tatsuji Kudo and Shoro Araki.,
{Topology of Hn-spaces and H-squaring operations.}
{\em Mem. Fac. Sci.Kyusyu Univ. Ser. A.},
{10:85--120, 1956.}.

%\bibitem{Kuhngeometry}
%Nicholas J. Kuhn.,
%{The geometry of the James-Hopf maps.},
%{\em Pac. J. Math.},
%{102:397--412, 1982.}.

%\bibitem{Kuhnhomology}
%Nicholas J. Kuhn.,
%{The homology of the James-Hopf maps.},
%{\em Ill. J. Math.},
%{27:315--333, 1983.}.

\bibitem{Lin}
Wen-Hsiung Lin,
{Algebraic Kahn-Priddy theorem.},
{\em Pac. J. Math.},
Vol. 96, 435--455, 1981.

\bibitem{Madsenthesis}
Ib Madsen.,
{On the action of the Dyer-Lashof algebra in $H_*(G)$ and $H_*(GTop)$.},
{\em PhD thesis, The University of Chicago.},
{1970.}.

\bibitem{Madsen},
{Ib {Madsen}},
{On the action of the Dyer-Lashof algebra in $H_*(G)$.},
{\em Pac. J. Math.},
Vol. 60, 235--275, 1975.

\bibitem{Milequi}
Benjamin M. Mann,  Edward Y. Miller, and Haynes R. Miller.,
{{$S^1$}-equivariant function spaces and characteristic classes},
{\em Trans. Amer. Math. Soc.},
Vol. 295(1), 233--256, 1986.

%\bibitem{Milgram-unstable}
%R. James Milgram.,
%{Unstable homotopy from the stable point of view.},
%{\em Lecture Notes in Mathematics, Vol. 368. Springer-Verlag, Berlin-New York,},
%{1974.}.

\bibitem{MM}
John W. {Milnor} and J.C. {Moore},
{On the structure of Hopf algebras.},
{\em Ann. Math. (2)}, Vol. 81, 211--264, 1965.

%\bibitem{MosherTangora}
%R.E. Mosher and M.C. Tangora.,
%{Cohomology operations and applications in homotopy theory.},
%{\em Harper's Series in Modern Mathematics. New York-Evanston-London: Harper and Row. X,},
%{214 p. ,1968.}.

%\bibitem{Ravenel-Greenbook}
%Douglas C. Ravenel.,
%{Complex cobordism and stable homotopy groups of spheres.}
%{\em 2nd ed. Providence, RI: AMS Chelsea Publishing, 2nd ed. edition.},
%{2004.}.

%\bibitem{Snaith}
%V.P. Snaith.,
%{A stable decomposition of $\Omega^nS^nX$.},
%{\em J. Lond. Math. Soc., II. Ser.},
%{7:577--583, 1974.}.

%\bibitem{WangXu-61stem}
%Guozhen Wang and Zhouli Xu.,
%{The triviality of the 61-stem in the stable homotopy groups of spheres.},
%{\em Ann. of Math.},
%{(2), 186(2):501--580, 2017.}.

\bibitem{RourkeSanderson-Comperssion.I}
{Colin Rourke and Brian Sanderson},
{The compression theorem. {I}},
{\em Geom. Topol.}, Vol. 5, 2001.


\bibitem{Wellington-thesis}
Robert J. Wellington.,
{The $A$-algebra $H_*\Omega^{n+1}\Sigma^{n+1}X$, the Dyer-Lashof algebra, and the $\Lambda$-algebra.},
{\em PhD thesis, The University of Chicago.},
{1977.}

\bibitem{Wellington}
Robert J. Wellington.,
{The unstable Adams spectral sequence for free iterated loop spaces.},
{\em Mem. Am. Math. Soc.},
{36(258):225, 1982.}.

%\bibitem{Zare-filteredfiniteness}
%Hadi Zare.,
%{Filtered finiteness of the image of the unstable hurewicz homomorphism with applications to bordism of immersions.},
%{submitted.}.

\bibitem{Zare-PEMS}
Hadi Zare.,
{Freudenthal theorem and spherical classes in $H_*QS^0$.},
{submitted.}.

\bibitem{Za-ideal}
Hadi Zare.,
{On the Hurewicz homomorphism on the extensions of ideals in $\pi_*^s$ and spherical classes in $H_*Q_0S^0$.},
{arXiv:1504.06752.}.

\bibitem{Zare-Els-1}
Hadi Zare.,
{Spherical classes in some finite loop spaces of spheres.},
{\em Topology and its Applications,},
{224--1 18, 2017.}.

\end{thebibliography}

\end{document}